\numberwithin{equation}{section}
\theoremstyle{plain}
\newtheorem{theorem}[equation]{Theorem}  
\newtheorem{lemma}[equation]{Lemma}       
\newtheorem{proposition}[equation]{Proposition}      
\newtheorem{corollary}[equation]{Corollary}
\newtheorem{question}[equation]{Question}
\newcolumntype{M}[1]{>{\raggedright}m{#1}}
\theoremstyle{remark}
\newtheorem*{remark*}{Remarks}  
\newtheorem*{example*}{Example}
\theoremstyle{definition}
\newtheorem{definition}[equation]{Definition}
\newtheorem{remark}[equation]{Remark}
\newtheorem{example}[equation]{Example}
\newcommand{\im}{\operatorname{Im}}
\newcommand{\Isom}{\mathop{\mathrm{Isom}}\nolimits}
\newcommand{\Ker}{\operatorname{Ker}}
\begin{document}

\title[QI-invariance of group $L^p$-cohomology and vanishings]
{Quasi-isometric invariance\\ of continuous group $L^p$-cohomology,\\ and first applications to vanishings}
 
\author{Marc Bourdon and Bertrand R\'emy} 
\maketitle

\begin{abstract}
We show that the continuous $L^p$-cohomology of locally compact second countable groups is a quasi-isometric invariant.
As an application, we prove partial results supporting a positive answer to a question asked by M.~Gromov, suggesting a classical behaviour of continuous $L^p$-cohomology of simple real Lie groups. 
In addition to quasi-isometric invariance, the ingredients are a spectral sequence argument and Pansu's vanishing results for real hyperbolic spaces. 
In the best adapted cases of simple Lie groups, we obtain nearly half of the relevant vanishings. 

\vspace*{2mm} \noindent{2010 Mathematics Subject Classification: } 20J05, 20J06, 22E15, 22E41, 53C35, 55B35, 57T10, 57T15.
%
%
%
%

\vspace*{2mm} \noindent{Keywords and Phrases: } $L^p$-cohomology, topological group, Lie group, symmetric space, quasi-isometric invariance, spectral sequence, cohomology vanishing, root system.
\end{abstract}

\setlength{\parskip}{\smallskipamount}

\tableofcontents

\setlength{\parskip}{\medskipamount}
\newpage

\section{Introduction}
\label{1}

Let $G$ be a locally compact second countable group. 
Then $G$ is metrizable and countable at infinity, i.e. is a countable union of compact subsets \cite[IX \textsection 2.9, Cor.\! to Prop.\! 16, p.\! 21]{BBK-TG-5to10}. 
Therefore we can use Fubini's theorem on $G$ and the group admits a left-invariant proper metric defining its topology \cite[Struble Th.\! 2.B.4]{CdH}.
In addition, it is also equipped with a left invariant Haar measure \cite[VII \textsection 1.2, Th.\! 1]{BBK-INT-7}, which enables us to define the spaces $L^p(G)$ 
\cite[IV \textsection 3.4, D\'ef.\! 2]{BBK-INT-1to4}.

In this paper we study the \emph{group $L^p$-cohomology} of $G$ for $p>1$.
By definition, this is the continuous cohomology of $G$ \cite[Chap.\! IX]{BW}, with coefficients in the right-regular representation on $L^p(G)$.
It shall be denoted by ${\rm H}^*_{\mathrm {ct}} \bigl( G, L^p(G) \bigr)$.
We also consider the associated \emph{reduced cohomology}, denoted by $\overline{{\rm H}^*_{\mathrm {ct}}} \bigl( G, L^p(G) \bigr)$ 
(they are the largest Hausdorff quotients of the previous spaces -- see Definition \ref{ell_pcont_def}).

Our first result is the quasi-isometry invariance of these cohomology spaces (see Corollary \ref{cont_asymp_cor} for details).

\begin{theorem}
\label{intro-thm1}
Let $G_1$, $G_2$ be locally compact second countable groups, equipped with left-invariant proper metrics.
Every quasi-isometry $F: G_1 \to G_2$ induces canonically an isomorphism of graded topological vector spaces $F ^* :{\rm H}^*_{\mathrm {ct}} \bigl( G_2, L^p(G_2) \bigr) \to {\rm H}^*_{\mathrm {ct}} \bigl( G_1, L^p(G_1) \bigr)$.
The same holds for the reduced cohomology.
\end{theorem}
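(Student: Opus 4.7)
The plan is to reduce the continuous $L^p$-cohomology of each $G_i$ to a simplicial $\ell^p$-cohomology of a uniformly locally finite combinatorial model quasi-isometric to $G_i$, and then to invoke the classical quasi-isometry invariance of this simplicial $\ell^p$-cohomology. Since $G_i$ admits a proper left-invariant metric, I would first choose a uniformly discrete cobounded subset (a net) $\Sigma_i \subset G_i$ and, for a scale $d$ large enough compared with the net constants, form the Rips complex $P_d(\Sigma_i)$. This is a uniformly locally finite simplicial complex quasi-isometric to $G_i$, on which $G_i$ acts by ``almost simplicial'' left translations.

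The main technical step is to establish a natural topological isomorphism
\begin{equation*}
{\rm H}^*_{\mathrm {ct}}\bigl(G_i, L^p(G_i)\bigr) \;\cong\; {\rm H}^*_{\ell^p}\bigl(P_d(\Sigma_i)\bigr),
\end{equation*}
together with its analogue for reduced cohomology. I would set this up by means of a double complex of bidegree $(m,n)$ combining the continuous bar complex of $G_i$ with $L^p(G_i)$-coefficients in the direction $m$ and the simplicial $\ell^p$-cochain complex of $P_d(\Sigma_i)$ in the direction $n$. Filtering in the simplicial direction should collapse, by uniform contractibility of the Rips complex at scales $\geq d$, to the continuous bar complex that computes ${\rm H}^*_{\mathrm {ct}}(G_i, L^p(G_i))$. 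Filtering in the group direction should collapse, by a Shapiro-type argument using coboundedness of the action of $G_i$ on $\Sigma_i$ and a Voronoi-type partition of $G_i$ over $\Sigma_i$ that interpolates between $L^p(G_i)$- and $\ell^p(\Sigma_i)$-coefficients, to the simplicial $\ell^p$-cohomology of $P_d(\Sigma_i)$. All these identifications need to be checked to be homeomorphisms of topological vector spaces so that the statement passes to maximal Hausdorff quotients.

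For the functoriality, a quasi-isometry $F : G_1 \to G_2$ can be perturbed, using coboundedness of the nets, so as to send $\Sigma_1$ into $\Sigma_2$, and then extended -- for $d$ taken large in terms of the quasi-isometry constants -- to a genuine simplicial map $F_\# : P_d(\Sigma_1) \to P_d(\Sigma_2)$. Pulling back cochains yields a continuous linear map of $\ell^p$-simplicial cochain complexes (continuity uses uniform local finiteness), hence a map on cohomology. A prism-type simplicial homotopy between two close simplicial approximations shows that this map depends only on the quasi-isometry class of $F$; applying the construction to a quasi-inverse then provides a two-sided inverse in cohomology and yields the theorem.

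The hard part is the comparison in the second paragraph, and specifically verifying that the double complex degenerates in both directions \emph{at the level of topological vector spaces}, not only as abstract ones. Continuity of the comparison is what makes the reduced-cohomology statement follow automatically, and the genuinely new geometric input, compared to the well-known discrete case, is precisely the Voronoi construction needed to handle $L^p(G_i)$-coefficients in the locally compact setting. Once the simplicial model is in place, the quasi-isometry functoriality is essentially formal and symmetric in $G_1$ and $G_2$.
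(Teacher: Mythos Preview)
Your strategy differs from the paper's, and it has a genuine gap.

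The paper does \emph{not} pass through the simplicial $\ell^p$-cohomology of a Rips complex at a fixed scale. Instead it identifies ${\rm H}^*_{\mathrm{ct}}\bigl(G,L^p(G)\bigr)$ with Pansu's \emph{asymptotic} $L^p$-cohomology $L^p{\rm H}^*_{AS}(G)$, defined via the complex $AS^{*,p}(G)$ carrying the whole family of semi-norms $N_s$ for all $s>0$, and then quotes Pansu's quasi-isometry invariance of the latter. The comparison is made by showing that $L^p_{\mathrm{loc}}(G^{*+1},L^p(G))^G$ is, on the one hand, a strong injective $G$-resolution (Blanc's lemma), and on the other hand topologically isomorphic to $AS^{*,p}(G)$. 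No contractibility of any simplicial model is used.

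The step where your outline breaks is the claim that filtering your double complex in the simplicial direction collapses ``by uniform contractibility of the Rips complex at scales $\geqslant d$''. For a general locally compact second countable group there is no scale $d$ at which the Rips complex of a net is contractible, let alone uniformly contractible: this is a special feature of hyperbolic (and a few other) geometries. Already for discrete groups, if $G$ is not of type $F_\infty$ then no fixed $P_d(\Sigma)$ can be contractible, since $G$ would then act properly cocompactly on a locally finite contractible complex. So the augmented simplicial rows of your double complex are not acyclic in general, the first filtration does not degenerate, and you never recover ${\rm H}^*_{\mathrm{ct}}(G,L^p(G))$. The paper's use of $AS^{*,p}(G)$ is exactly what sidesteps this: rather than fixing a scale and hoping the model is contractible, it keeps all scales via the projective system of semi-norms $N_s$, and the comparison with group cohomology goes through Blanc's resolution, which is strong injective for every $G$. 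A secondary issue is that $G_i$ does not act on $P_d(\Sigma_i)$ for an arbitrary net $\Sigma_i$, so the ``Shapiro-type'' collapse in the other direction is also not set up; the paper avoids this as well, since $L^p_{\mathrm{loc}}(G^{*+1},L^p(G))$ is a genuine $G$-module.
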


For example the group $L^p$-cohomology of a group $G$ as above is isomorphic to the group $L^p$-cohomology of any of its cocompact lattices.
When $G$ is semisimple, its group $L^p$-cohomology is isomorphic to the group $L^p$-cohomology of any of its parabolic subgroups; this generalizes in fact to (the points of) any connected real algebraic group. 

Theorem \ref{intro-thm1} was already known for the finite $K(\pi, 1)$-groups by M.\! Gromov (see \cite[p.\! 219]{G} and Remark \ref{simplicial_remark}), and then proved for finitely generated groups by G.\! Elek \cite{Elek}. 
In degree $1$, the theorem is due to Cornulier-Tessera \cite{CT}. 
We prove it (in any degree) by relating the group $L^p$-cohomology with the asymptotic $L^p$-cohomology.
The  latter has been defined by Pansu, and it is known to be a quasi-isometry invariant \cite{Pa95}. The equivalence between
the group $L^p$-cohomology and the asymptotic $L^p$-cohomology was established by G.\! Elek \cite{Elek} for finitely generated groups, and by R.\! Tessera \cite{T} in degree $1$.
We recently learnt from R.~Sauer and M.~Schr\"odl that they established the coarse invariance of vanishing of $\ell^2$-Betti numbers for unimodular locally compact second countable groups \cite{SaSc}. To prove this result they established a coarse equivalence version of Theorem \ref{intro-thm1}, by using the same comparison stategy  -- see Theorem 12 in \cite{SaSc}. 

Our motivation for Theorem \ref{intro-thm1}, which leads to the second main theorem of this paper, is understanding the $L^p$-cohomology of semisimple groups of real rank $\geqslant 2$ 
(for the precise conventions on Lie groups in the paper, we refer to the end of this introduction). 
In fact, we have in mind the following question, asked by M.~Gromov \cite[p.\! 253]{G}.

\begin{question}
Let $G$ be a simple real Lie group.
We assume that $l=\mathrm{rk}_{\bf R} (G) \geqslant 2$. 
Let $k$ be an integer $<l$ and $p$ be a real number $>1$. 
Do we have: ${\rm H}^k_{\mathrm {ct}} \bigl(G, L^p(G) \bigr) = \{0\}$?
\end{question}

The previous theorem allows one to reduce the study of the group $L^p$-cohomology of a Lie group $G$ as in the question, to the group $L^p$-cohomology of any of its parabolic subgroups. 
Such a parabolic subgroup, say $P$, admits a Levi decomposition, which is a semi-direct product decomposition $P = M \ltimes AN$ where $M$ is semi-simple and $AN$ is the (solvable) radical of $P$. 
The class for which we can fruitfully use Theorem \ref{intro-thm1} consists of the simple Lie groups in which some maximal parabolic subgroup has suitable geometric properties. By lack of better terminology, we will call them \emph{admissible}. 
Here is their definition:

\begin{definition}\label{intro-def}
A simple real Lie group is called \emph{admissible} if it admits a parabolic subgroup whose radical is quasi-isometric to a real hyperbolic space.
\end{definition}

\begin{theorem}
\label{intro-thm2}
Let $G$ be an admissible simple real Lie group, and let $d$ be the dimension of the corresponding real hyperbolic space.
Let also $D$ denote the dimension of the Riemannian symmetric space attached to $G$. 
\begin{enumerate}
\item We have: ${\rm H}^0 _{\mathrm{ct}}\bigl( G, L^p(G) \bigr) = \{0\}$, and also: ${\rm H}^k _{\mathrm{ct}}\bigl( G, L^p(G) \bigr) = \{0\}$ for $k \geqslant D$.
\item Suppose now that $k \in \{1, ... ,D-1\}$. 
Then we have:
$${\rm H}^k _{\mathrm{ct}}\bigl( G, L^p(G) \bigr) = \{0\} \,\, \hbox{\rm for} \,\, k \leqslant \frac{d-1}{p} \,\, \hbox{\rm and for} \,\, k \geqslant \frac{d-1}{p} + D-d+2.$$
\end{enumerate}

\end{theorem}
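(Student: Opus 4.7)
The plan is to combine Theorem \ref{intro-thm1} with the Hochschild--Serre spectral sequence for the Levi decomposition of a suitable parabolic subgroup, and with Pansu's vanishing results on real hyperbolic spaces. By admissibility of $G$, fix a parabolic $P$ whose Levi decomposition reads $P = M \ltimes AN$ with $AN$ quasi-isometric to $\hh^d$. The Iwasawa decomposition of $G$ shows that $P$ is cocompact in $G$, so Theorem \ref{intro-thm1} produces an isomorphism
\[
{\rm H}^*_{\mathrm{ct}} \bigl( G, L^p(G) \bigr) \cong {\rm H}^*_{\mathrm{ct}} \bigl( P, L^p(P) \bigr),
\]
and one may argue entirely on $P$. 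The decomposition $G/K \cong (M/K_M) \times A \times N$ further yields $\dim(M/K_M) = D - d$, a key numerical input below.

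For part (1), the statement ${\rm H}^0_{\mathrm{ct}} = \{0\}$ is a general fact about the right-regular representation on $L^p(G)$: its continuous $G$-invariants are constants, and no non-zero constant belongs to $L^p(G)$ since $G$ is non-compact. The vanishing in degrees $k \geqslant D$ reflects that, by Theorem \ref{intro-thm1}, $G$ is quasi-isometric to the contractible Riemannian manifold $X = G/K$ of dimension $D$, so that its asymptotic $L^p$-cohomology vanishes for $k > D$; the borderline case $k = D$ can be handled either by the spectral sequence argument below (the would-be corner bidegree $(D - d, d)$ is excluded by Pansu's vanishing applied to $\hh^d$ in top degree) or by a duality argument with coefficients in $L^{p'}$.

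For part (2), consider the Hochschild--Serre spectral sequence in continuous cohomology for the group extension $1 \to AN \to P \to M \to 1$ with coefficients in $L^p(P)$:
\[
E_2^{i,j} = {\rm H}^i_{\mathrm{ct}} \bigl( M, {\rm H}^j_{\mathrm{ct}} \bigl( AN, L^p(P) \bigr) \bigr) \Longrightarrow {\rm H}^{i+j}_{\mathrm{ct}} \bigl( P, L^p(P) \bigr).
\]
To analyze the inner term one decomposes $L^p(P) \cong L^p\bigl(M, L^p(AN)\bigr)$ via Fubini and applies Theorem \ref{intro-thm1} to the quasi-isometry between $AN$ and $\hh^d$, which identifies ${\rm H}^j_{\mathrm{ct}}(AN, L^p(AN))$ with the $L^p$-cohomology of $\hh^d$. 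Pansu's vanishing asserts that this is zero unless $(d-1)/p < j < (d-1)/p + 2$, so $E_2$ is concentrated in a horizontal strip of height at most two. Simultaneously, since $M$ is semisimple with symmetric space of dimension $D - d$, one has ${\rm H}^i_{\mathrm{ct}}(M, V) = 0$ for $i > D - d$, confining $E_2$ to the vertical strip $0 \leqslant i \leqslant D - d$. The only total degrees where $E_2^{i,j}$ can be non-zero are therefore $k = i + j$ with $(d-1)/p < k < (d-1)/p + D - d + 2$, which is precisely the complement of the two vanishing ranges in the statement.

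The main obstacle is to make the above outline rigorous in the continuous (Banach) cohomology setting: (i) establishing the Hochschild--Serre spectral sequence in continuous cohomology with coefficients in the Banach $P$-module $L^p(P)$, together with its convergence; and (ii) performing the Fubini-type decomposition $L^p(P) \cong L^p(M, L^p(AN))$ in a way compatible with the $P$-action, so that Theorem \ref{intro-thm1} can be applied to the normal subgroup $AN$ with coefficients that depend on the $M$-variable. Once these technical points are settled, the combinatorics of the ranges reduces to Pansu's theorem and the dimension identity $\dim(M/K_M) = D - d$.
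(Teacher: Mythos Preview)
Your approach to the low-degree vanishing ($k \leqslant (d-1)/p$) matches the paper's: both use the quasi-isometry $G \sim P$ together with Pansu's vanishing for $\mathbb{H}^d$ fed into the Hochschild--Serre machinery (the paper's Corollary \ref{spectral_cor2}).

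For the high-degree vanishing ($k \geqslant (d-1)/p + D - d + 2$), however, the paper takes a different route: instead of pushing the spectral sequence further and invoking the cohomological-dimension bound ${\rm H}^i_{\mathrm{ct}}(M, V) = 0$ for $i > D - d$, it appeals to \emph{Poincar\'e duality} for the simplicial $L^p$-cohomology of the symmetric space $X$ (Lemma \ref{lemma_duality}). Having established that $L^p{\rm H}^k(X) = 0$ for $k \leqslant (d-1)/p$, duality with exponent $q = p/(p-1)$ immediately gives that $L^p\overline{{\rm H}^k}(X) = 0$ for $k \geqslant (d-1)/p + D - d + 1$ and that $L^p{\rm H}^k(X)$ is Hausdorff for $k \geqslant (d-1)/p + D - d + 2$; combining these yields the claimed vanishing. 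The same duality argument also handles the borderline case $k = D$ in part (1).

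Your spectral-sequence route for high degrees has a genuine obstacle that is not merely one of the ``technical points to be settled''. The continuous Hochschild--Serre spectral sequence, as set up in the paper (Corollary \ref{spectral_cor1}), requires every ${\rm H}^j_{\mathrm{ct}}\bigl(AN, L^p(AN)\bigr)$ to be Hausdorff, since otherwise one cannot even \emph{form} the $E_2$-page with these as coefficient modules for continuous $M$-cohomology. But by Pansu's computation (Lemma \ref{prop_SL}), the cohomology in degree $j = (d-1)/p + 1$ is \emph{not} Hausdorff whenever $(d-1)/p \in {\bf N}$. For these exceptional exponents your argument breaks down, and it is not clear how to repair it without leaving the category of Hausdorff topological vector spaces. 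The paper's duality argument sidesteps this issue entirely and works uniformly for all $p > 1$. (When $(d-1)/p \notin {\bf N}$ your approach does go through, and indeed the paper uses precisely this in the third item of Theorem \ref{theorem_vanishing} to give a description of the cohomology inside the non-vanishing strip.)
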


Our results also deal with reduced cohomology -- see Theorem \ref{theorem_vanishing} for details.

The prototype of a simple Lie group, namely ${\rm SL}_n({\bf R})$, is admissible, and in this case we obtain (see Example {\ref{ex_SL} for details): 

\begin{corollary}
\label{intro_cor1}
For $G = {\rm SL}_n({\bf R})$, one has ${\rm H}^k _{\mathrm {ct}} \bigl( G, L^p(G) \bigr) = \{0\}$ for $k \leqslant \lfloor \frac{n^2}{4}\rfloor \cdot\frac{1}{p}$
and for $k \geqslant \lfloor \frac{n^2}{4}\rfloor \cdot\frac{1}{p} + \lfloor \frac{(n+1)^2}{4}\rfloor $.
\end{corollary}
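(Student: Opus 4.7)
The plan is to verify that $G=\mathrm{SL}_n(\mathbb R)$ is admissible in the sense of Definition \ref{intro-def}, compute the integers $d$ and $D$ appearing in Theorem \ref{intro-thm2} for the optimal choice of parabolic, and then simply apply that theorem.

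For admissibility, I would consider the maximal parabolic $P_k \subset \mathrm{SL}_n(\mathbb R)$ stabilizing a $k$-dimensional subspace of $\mathbb R^n$, for some $1\le k\le n-1$. Its Levi decomposition reads $P_k = M_k\ltimes A_k N_k$ with $M_k \cong \mathrm S(\mathrm{SL}_k\times\mathrm{SL}_{n-k})$ semi-simple, $A_k$ the one-dimensional center of the full Levi (the diagonal matrices $\mathrm{diag}(tI_k,sI_{n-k})$ with $t^ks^{n-k}=1$), and $N_k\cong \mathbb R^{k(n-k)}$ the abelian unipotent radical of upper block matrices $\begin{pmatrix}I_k&X\\0&I_{n-k}\end{pmatrix}$. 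A direct conjugation computation shows that $A_k$ acts on $N_k$ by positive scalars, so the radical $A_kN_k$ is isomorphic to the solvable group $\mathbb R^{k(n-k)}\rtimes \mathbb R$ where $\mathbb R$ acts by a one-parameter group of homotheties. This group, equipped with its natural left-invariant Riemannian metric, is isometric to the real hyperbolic space $\mathbb H^{k(n-k)+1}$ (the upper half-space model exhibits $\mathbb H^{d}$ as precisely such a semi-direct product). Hence $\mathrm{SL}_n(\mathbb R)$ is admissible and the associated integer is $d = k(n-k)+1$.

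To obtain the sharpest vanishing range from Theorem \ref{intro-thm2}, I choose $k=\lfloor n/2\rfloor$, which maximizes $k(n-k)$ and yields $d-1 = \lfloor n^2/4\rfloor$. The dimension of the symmetric space $\mathrm{SL}_n(\mathbb R)/\mathrm{SO}(n)$ is
\[
D = (n^2-1) - \tfrac{n(n-1)}{2} = \tfrac{(n-1)(n+2)}{2}.
\]
A short case analysis on the parity of $n$ (writing $n=2m$ or $n=2m+1$) then gives the key identity
\[
D - d + 2 \;=\; \tfrac{(n-1)(n+2)}{2} - \lfloor n^2/4\rfloor + 1 \;=\; \lfloor (n+1)^2/4\rfloor,
\]
which I would just check in both parities.

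With these values plugged in, Theorem \ref{intro-thm2}(1)--(2) directly yields the two claimed vanishing ranges: $\mathrm H^k_{\mathrm{ct}}(G,L^p(G))=\{0\}$ for $k\le\lfloor n^2/4\rfloor/p$, and for $k\ge\lfloor n^2/4\rfloor/p + \lfloor(n+1)^2/4\rfloor$. Note the range $k\ge D$ of part (1) is already contained in the second range since $\lfloor (n+1)^2/4\rfloor \ge D-d+2$ and $(d-1)/p>0$. The only nontrivial point in the whole argument is the identification of the radical $A_kN_k$ with a hyperbolic model space; the rest is bookkeeping with dimensions and floors. I do not foresee a serious obstacle.
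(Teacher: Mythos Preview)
Your proof is correct and follows essentially the same route as the paper (see Example~\ref{ex_SL}): identify the maximal parabolic $P_s$ of block size $s$, check that $AN\simeq\mathbb R\ltimes\mathbb R^{s(n-s)}$ is isometric to $\mathbb H^{s(n-s)+1}$, take $s=\lfloor n/2\rfloor$ to maximise $d-1=\lfloor n^2/4\rfloor$, compute $D=(n-1)(n+2)/2$, and verify the identity $D-d+2=\lfloor(n+1)^2/4\rfloor$ by a parity check before invoking Theorem~\ref{intro-thm2}. The only cosmetic difference is that the paper records the two parities separately before unifying them via the floor expressions.
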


We also note that the same circle of ideas can be used to prove the following result, which is a particular case of a general result of Pansu \cite{P2} and 
Cornulier-Tessera \cite{CT}, saying that ${\rm H}^1_{\mathrm {ct}} \bigl( G, L^p(G) \bigr) = \{0\}$ for every $p>1$ and
every connected Lie group $G$, unless $G$ is Gromov hyperbolic or amenable unimodular. 

\begin{corollary}
\label{intro_cor2}
For every admissible simple Lie group $G$ such that $\mathrm{rk}_{\bf R} (G) \geqslant 2$ and every $p >1$, we have: ${\rm H}^1 _{\mathrm {ct}} \bigl( G, L^p(G) \bigr) = \{0\}$.
\end{corollary}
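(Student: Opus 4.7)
The regime $p \leqslant d-1$ follows immediately from Theorem~\ref{intro-thm2}(2) with $k=1$, so the content is to handle $p > d-1$. My plan uses the same machinery --- QI-invariance, the Hochschild--Serre spectral sequence for an admissible parabolic $P = M \ltimes AN$, and Pansu's analysis of real hyperbolic spaces --- while exploiting additionally that $\mathrm{rk}_{\mathbf{R}}(G) \geqslant 2$ forces $M$ to be noncompact. By Theorem~\ref{intro-thm1}, I would first replace $G$ with its admissible parabolic $P = M \ltimes AN$, where $AN$ is quasi-isometric to $\hh^d$, and so reduce to showing ${\rm H}^1_{\mathrm{ct}}(P, L^p(P)) = 0$.

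Next I would apply the Hochschild--Serre spectral sequence for $1 \to AN \to P \to M \to 1$ with coefficients $V = L^p(P)$:
\[
E_2^{i,j} = {\rm H}^i_{\mathrm{ct}}\bigl(M,\, {\rm H}^j_{\mathrm{ct}}(AN, V)\bigr) \Longrightarrow {\rm H}^{i+j}_{\mathrm{ct}}(P, V).
\]
Since $AN$ is noncompact of infinite Haar measure, $V^{AN} = 0$, so the five-term exact sequence yields an injection ${\rm H}^1_{\mathrm{ct}}(P, V) \hookrightarrow {\rm H}^1_{\mathrm{ct}}(AN, V)^M$. A Fubini identification $V \cong L^p(M, L^p(AN))$ as an $AN$-module (with $AN$ acting trivially on the $M$-factor) then gives
\[
{\rm H}^1_{\mathrm{ct}}(AN, V) \cong L^p\bigl(M,\, {\rm H}^1_{\mathrm{ct}}(AN, L^p(AN))\bigr),
\]
and Theorem~\ref{intro-thm1} applied to the quasi-isometry $AN \to \hh^d$, together with Pansu's theorem, identifies the coefficient module $W := {\rm H}^1_{\mathrm{ct}}(AN, L^p(AN))$ with the first continuous $L^p$-cohomology of $\hh^d$, which is nonzero precisely when $p > d-1$.

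The main obstacle will be to show $L^p(M, W)^M = 0$ in this remaining range, and this is where the rank hypothesis is decisive, through the noncompactness of $M$. A twisting argument, with appropriate modular corrections, should convert the diagonal $M$-action on $L^p(M, W)$ into one trivial on $W$ and acting by right translation on $L^p(M)$, whose invariants $L^p(M)^M$ vanish. Geometrically, this reflects that Pansu's nontrivial degree-one classes correspond to gradients of Busemann functions based at boundary points of $\hh^d$, and the noncompact action of $M$ on that boundary --- which fixes only the distinguished point associated with $A$ --- leaves no nonzero combination of such classes invariant.
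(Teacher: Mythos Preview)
Your overall architecture matches the paper's: reduce to $P$ by quasi-isometric invariance, use the spectral sequence to identify ${\rm H}^1_{\mathrm{ct}}\bigl(P, L^p(P)\bigr)$ with $L^p(M, W)^M$ where $W = {\rm H}^1_{\mathrm{ct}}\bigl(AN, L^p(AN)\bigr)$, and then argue that this fixed-point space is zero using noncompactness of $M$. Up to that point your sketch is sound and coincides with what the paper does via Theorem~\ref{theorem_vanishing} and Proposition~\ref{spectral_prop}.

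The gap is in your final step. An $M$-invariant $f \in L^p(M, W)$ is determined by $u := f(e) \in W$ via $f(m) = u \circ m$, so the question is whether
\[
\int_M \Vert u \circ m \Vert_W^p \, d\mathcal H(m) < \infty
\]
forces $u = 0$. Your proposed untwisting $f \mapsto \tilde f$, $\tilde f(y) = \rho(y) f(y)$, does formally turn the diagonal action into right translation, but it is \emph{not} an isomorphism of $L^p(M, W)$: the $M$-action on $W$ (precomposition by the linear conjugation action of $M$ on $N \simeq {\bf R}^{d-1}$) is not isometric, nor even uniformly bounded, because $M$ is noncompact and does not act orthogonally. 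So from $\tilde f$ being a nonzero constant you cannot conclude $\tilde f \notin L^p$ implies $f \notin L^p$. Concretely, nothing you have said rules out the possibility that $\Vert u \circ m \Vert_W$ decays fast enough for the integral to converge.

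The paper fills this gap with genuine analysis. It identifies $W$ with the Besov space $B^{(d-1)/p}_{p,p}({\bf R}^{d-1})$ and rewrites the integral above as a weighted double integral with weight $\varphi(x - x')$, where $\varphi(v) = \int_M \Vert m^{-1}(v / \Vert v \Vert) \Vert^{-2(d-1)} d\mathcal H(m)$. It then picks a one-parameter subgroup of $M$ acting diagonally on ${\bf R}^{d-1}$ with distinct eigenvalues (this is where noncompactness of $M$ enters), and proves a quantitative blow-up estimate $\varphi(v) \gtrsim \Vert v - v_0 \Vert^{-\alpha}$ with $\alpha \geqslant d-1$ near an eigenvector $v_0$. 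Combined with approximate continuity of $u$, this forces $u$ to be constant along every line in the $v_0$-direction, and a separate lemma shows such translation invariance in $B^{(d-1)/p}_{p,p}$ forces $u$ to be constant. None of this is captured by a formal twisting; you need the Besov description and the eigenvalue estimate.
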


This result also follows from the fixed point property for continuous affine isometric actions of semi-simple groups of real rank $\geqslant 2$ on $L^p$-spaces, which is established in \cite{BFGM}.

Let us finish this introduction by describing roughly our method and by discussing its efficiency and its validity. 

The main idea is to combine tools from homological algebra, namely the Hochschild-Serre spectral sequence \cite[IX.4]{BW}, together with geometric results such as Pansu's description of the $L^p$-cohomology of real hyperbolic spaces \cite{Pa99, P1}.
We also use resolutions already considered in \cite{Bl}, whose measurable nature makes them flexible enough to describe nicely intermediate spaces involved in the spectral sequences. 

The general outcome is a new description of the $L^p$-cohomology of $G$ -- see Theorem \ref{theorem_vanishing} and Remark \ref{remark_SL}. 

In order to discuss the efficiency of the method, let us reformulate Theorem \ref{intro-thm2} as follows: for each admissible group $G$ and each exponent $p>1$, we exhibit an interval of degrees, whose explicit length does not depend on $p$ and outside of which vanishing of $L^p$-cohomology is guaranteed (\emph{i.e.}, this interval is a strip of potential non-vanishing of $L^p$-cohomology). 
First, by Lie-theoretic considerations, we can provide the full list of admissible simple Lie groups. 
It contains several infinite sequences of groups, say $\{ G_l \}_{l \geqslant 2}$ where $l=\mathrm{rk}_{\bf R} (G)$. 
In the best cases, the dimension $D_l$ of the symmetric space of $G_l$ and the dimension $d_l$ of the radical of the suitable parabolic subgroup are quadratic polynomials in $l$,
and we have $\displaystyle \lim_{l \to \infty} {d_l \over D_l} = {1 \over 2}$.
Since the width of the strip of potential non-vanishing is $D_l-d_l+2$, this makes us say that we obtain in the best cases of admissible groups nearly half of the relevant vanishings. 

At last, our results can also be compared 
with Borel's result \cite{Borel} about the $L^2$-cohomology of connected semi-simple Lie groups $G$, which makes heavy use of representation theory while we use a more geometric approach.
It  asserts that: 
\begin{itemize}
\item $\overline{{\rm H}^k_{\mathrm {ct}}} \bigl( G, L^2(G) \bigr) = 0$ unless $k = \frac{D}{2}$,
\item  ${\rm H}^k_{\mathrm {ct}} \bigl( G, L^2(G) \bigr) \neq 0$ at least for $k \in \bigl(\frac{D}{2} - \frac{l_0}{2}, \frac{D}{2} +\frac{l_0}{2}\bigr]$,
\end{itemize}
where $D$ is the dimension of the Riemannian symmetric space $G/K$, and
where $l_0$ is the difference between the complex rank of $G$ and the complex rank of $K$. 
In the case $G = {\rm SL}_n({\bf R})$, one has $D = \frac{n^2 + n -2}{2}$ and 
$l_0 = \lfloor \frac{n-1}{2} \rfloor$. 
See also \cite{BFS} for related results about vanishing of the reduced $L^2$-cohomology.

\subsection*{Conventions on Lie groups}

Let us collect here our conventions on Lie groups. 
In what follows, all Lie groups are assumed to be connected. 
In fact, we assume that any semisimple group $G$ we consider here is obtained as the identity component of the real points of an algebraic semisimple ${\bf R}$-group ${\bf G}$, assumed to connected as a linear algebraic group. 
In other words, we have: $G = {\bf G}({\bf R})^\circ$. 
Recall that if ${\bf H}$ is any connected linear algebraic group defined over ${\bf R}$, then the group ${\bf H}({\bf R})^\circ$ is a finite index normal subgroup in ${\bf H}({\bf R})$  \cite[14.1]{BoTi}. 
Moreover if ${\bf S}$ is a maximal ${\bf R}$-split torus in ${\bf G}$ as before, that is a subgroup isomorphic over ${\bf R}$ to some power ${\mathbb G}_m^d$ of the multiplicative group ${\mathbb G}_m$, then we have: ${\bf G}({\bf R}) = G \cdot {\bf S}({\bf R})$  \cite[Th.\! 14.4]{BoTi}. 
The classical notation for the identity component ${\bf S}({\bf R})^\circ$ is $A$; the latter group is a maximal subgroup diagonalizable over ${\bf R}$ in $G$ and, in view of its construction, we will still call it a maximal ${\bf R}$-split torus. 
It is checked in \cite[14.7]{BoTi} that, in the case of semisimple groups as in our convention, the root system and the Weyl group arising from considerations from algebraic groups, \emph{e.g.}\! as in \cite[\textsection 21]{Borel-LAG}, coincide with the root system and the Weyl group as defined by \'E.~Cartan in terms of symmetric spaces -- see also \cite[\textsection 24.C]{Borel-LAG}.

\subsection*{Remarks and Questions}

\textbf{1)} The definition of an admissible simple Lie group (Definition \ref{intro-def}) 
generalizes to semisimple Lie groups. It is straightforward that a semisimple real Lie group is admissible if and only if one of its simple factors is. Theorem \ref{intro-thm2} is still valid for admissible semisimple Lie groups (the proof is the same). 
However we think
that some $L^p$-version of the K\"unneth formula may be true for products of groups and could give better results. More precisely, suppose that $G_1$, $G_2$ are locally compact second countable groups, such that for some $p \in (1, +\infty)$ and  $n_1, n_2 \in \bf N$, 
one has ${\rm H}^k_{\mathrm {ct}} \bigl( G_i, L^p(G_i) \bigr) =0$ for every $k\le n_i$. Is ${\rm H}^k _{\mathrm {ct}} \bigl( G_1 \times G_2, L^p(G_1 \times G_2) \bigr) =0$  for every $k \le n_1 + n_2 +1$ ? We remark that these problems are discussed in  \cite[p.\! 252]{G}.

\textbf{2)} Even if a simple real Lie group
is non-admissible, the solvable radical of any maximal proper parabolic subgroup 
decomposes as ${\bf R} \ltimes N$,
where $N$ is a nilpotent Lie group on which ${\bf R}$ acts by contracting diffeomorphisms. Such a group belongs to the class of the so-called 
\emph{Heintze groups}, \emph{i.e.\!} the (connected) Lie groups which admit a left-invariant Riemannian metric of negative curvature. 
The $L^p$-cohomology of negatively curved manifolds has been investigated by P.\! Pansu \cite{Pa99, P1} (see also \cite{B16}). 
However, apart from the real hyperbolic spaces, only a partial description is known.
To keep the exposition of the paper as simple as possible, we have choosen to restrict ourself to the class of admissible simple Lie groups.

\subsection*{Organization of the paper} Section \ref{2} is a brief presentation of some standard topics in cohomology of groups.
Section \ref{3} discusses group $L^p$-cohomology: we first consider the discrete group case and recall  some classical connections with the simplicial  $L^p$-cohomology; in the general case, we state the existence of an isomorphism between the group $L^p$-cohomology and the asymptotic $L^p$-cohomology (Theorem \ref{cont_asymp_thm}).
This result, whose proof is given in Section \ref{4}, implies the first quasi-isometric invariance theorem (Theorem \ref{intro-thm1} of the introduction). 
Section \ref{5} contains some preliminary results on the group $L^p$-cohomology of semi-direct products via the Hochschild-Serre spectral sequence.   
Section \ref{6} focuses on applications of the previous sections to the group $L^p$-cohomology of admissible simple real Lie groups  (an improved version of Theorem \ref{intro-thm2} of the introduction). Corollary \ref{intro_cor1} is proved in this section.
Section \ref{7} provides the list of admissible groups and Section \ref{8} gives tables in which the numerical efficiency of the method can be discussed. 
Finally,  in Section \ref{9} we prove Corollary \ref{intro_cor2} on vanishing in degree 1.

\subsection*{Acknowledgements} 
We thank Nicolas Monod, whose paper \cite{Monod} was a valuable source of inspiration.
We also thank Pierre Pansu: the present paper elaborates on  several of his results. 
At last, we thank Y. Cornulier for useful comments and suggestions, and R. Sauer and M. Schr\" odl for drawing our attention to their paper \cite{SaSc}. We are grateful to the anonymous referees for their help in improving the exposition.
M.B.\! was partially supported by the Labex Cempi and B.R. by the GeoLie project (ANR-15-CE40-0012). 
Both authors were supported by the GDSous/GSG project (ANR-12-BS01-0003).

\section{Continuous cohomology}
\label{2}

We review some of the basic notions of the continuous cohomology of topological groups. 
We refer to \cite[Chap.\! IX]{BW} or to \cite{Gui}, and to the references they contain, for more details. 

For topological spaces $X$ and $Y$, we denote by $C(X,Y)$ the set of continuous mappings from $X$ to $Y$, equipped with the compact open topology. 
 
Let $G$ be a locally compact second countable group, and let $(\pi, V)$ be a \emph{topological $G$-module} \emph{i.e.\!} a Hausdorff locally convex vector space over ${\bf R}$ on which $G$ acts via a continuous representation $\pi$. 
In this case, we denote by $V^G \subset V$ the subspace of $\pi(G)$-invariant vectors.
A map $f: A \to B$ between topological $G$-modules
is called a \emph{$G$-morphism} if it is a continuous $G$-equivariant linear map.

For $k \in {\bf N}$, let $C^k (G, V):= C(G^{k+1}, V)$ be the set of continuous maps from $G^{k+1}$ to $V$ equipped with the compact open topology. 
Then $C^k (G,V)$ is a topological $G$-module by means of the following action:
for $g, x_0, ... , x_k \in G$,
$$(g \cdot f) (x_0, ... , x_k) = \pi(g) \bigl(f(g ^{-1} x_0, ... , g^{-1} x_k)\bigr).$$

Consider the following complex 
$$C^0(G,V)^G \stackrel{d_0}{\to} C^1(G,V)^G \stackrel{d_1}{\to} C^2(G,V)^G \stackrel{d_2}{\to} ... $$
where, for $k \in {\bf N}$ and $x_0, ... , x_{k+1} \in G$,
\begin{equation}\label{differential} 
(d _k f)(x_0, ... , x_{k+1}) = \sum _{i =0} ^{k+1} (-1)^{i} f(x_0 , ... , \hat{x_i}, ... , x_{k+1}).
\end{equation}

We first define the cohomologies we are interested in thanks to the above standard homogeneous resolution. 

\begin{definition}\label{ell_pcont_def}
\begin{enumerate}
\item The \emph{continuous cohomology of $G$ with coefficients in $(\pi ,V)$} is the cohomology of this complex. 
It is the collection of groups ${\rm H}_{\mathrm{ct}} ^* (G, V) = \{{\rm H}_{\mathrm{ct}} ^k (G, V) \}_{k \in {\bf N}}$, where
${\rm H}_{\mathrm{ct}} ^0 (G, V) = \Ker d_0 \simeq V^G$ and
${\rm H}_{\mathrm{ct}} ^k (G, V) = \Ker d_k / \im d_{k-1}$ 
for $k \geqslant 1$. 
\item The \emph{reduced continuous cohomology} is the collection $\overline{{\rm H}_{\mathrm{ct}} ^*} (G, V) = 
\{\overline{{\rm H}_{\mathrm{ct}} ^k} (G, V) \}_{k \in {\bf N}}$, where
$\overline{{\rm H}_{\mathrm{ct}} ^0} (G, V) = \Ker d_0 \simeq V^G$, and where
$\overline{{\rm H}_{\mathrm{ct}} ^k} (G, V) = \Ker d_k / \overline{\im d_{k-1}}$ for $k \geqslant 1$ 
(here $\overline{\im d_{k-1}}$ denotes the closure of $\im d_{k-1}$ in $C^k (G,V)$) .
\item When $G$ is a discrete group, we omit the subscript ``$\mathrm{ct}$''  and simply write ${\rm H} ^k (G, V)$ and 
$\overline{{\rm H} ^k} (G, V)$.
\end{enumerate}
\end{definition}

We now recall some notions in relative homological algebra, see \cite[Chap.\! I, \textsection 2 and Chap.\! III, \textsection 1]{Gui} and \cite[Chap.\! IX, 1.5]{BW}. 
\begin{itemize}
\item If $A$ and $B$ are topological $G$-modules as above, we say that a $G$-morphism $f : A \to B$ is {\it strong}~if its kernel and its image are closed direct topological summands in $A$ and $B$ respectively, and if $f$ is strict in the sense that it induces a topological isomorphism between $A/{\rm Ker}(f)$ and ${\rm Im}(f)$;  in \cite[Chap.\! IX, 1.5, p.\! 172]{BW}, such a map $f$ is also called an s-{\it morphism}.  
\item An exact sequence consisting of $G$-morphisms between topological $G$-modules is called {\it strong}~if all the involved maps are strong~; in  \cite[Chap.\! IX, 1.5, p.\! 172]{BW}, such an exact sequence is also called an s-{\it exact sequence}. 
\item A topological $G$-module $U$ is called \emph{relatively injective} if for every strong injection $0 \to A \to B$ between topological $G$-modules, 
every $G$-morphism $A \to U$ extends to a $G$-morphism $B \to U$; in \cite[Chap.\! IX, 1.5, p.\! 172]{BW}, such a module is also called s-{\it injective}. 

\item A complex of topological $G$-modules and $G$-morphisms 
$$0 \to V \stackrel{d_{-1}}{\to} A^0 \stackrel{d_0}{\to} A^1 \stackrel{d_1}{\to} A^2 \stackrel{d_2}{\to} ... $$
is a \emph{strong $G$-resolution of $V$} if for every $k \in {\bf N}$, 
there exists a continuous linear map $h_k: A^k \to A^{k-1}$ (not requested to be a $G$-morphism) such that 
$h _0 \circ d_{-1} = \mathrm{id}$ and for every $k \in {\bf N}$ 
$$d_{k-1} \circ h_k + h_{k+1} \circ d_k = \mathrm{id}.$$
 \item A \emph{relatively injective strong $G$-resolution of $V$} is a strong
$G$-resolution $0 \to V \stackrel{d_{-1}}{\to} A^0 \stackrel{d_0}{\to} A^1 \stackrel{d_1}{\to} A^2 \stackrel{d_2}{\to} ...$
in which the $A^{k}$'s are relatively injective (in Borel-Wallach's terminology, it is an s-exact sequence in which the topological $G$-modules are s-injective).
\end{itemize}

Let $A^* = (A^0 \stackrel{d_0}{\to} A^1 \stackrel{d_1}{\to} A^2 \stackrel{d_2}{\to} ...)$
and $B^* = (B^0 \stackrel{d_0}{\to} B^1 \stackrel{d_1}{\to} B^2 \stackrel{d_2}{\to} ...)$ 
be complexes of Hausdorff locally convex vector spaces.
\begin{itemize}
\item They are  \emph{homotopy equivalent} if there exist continuous homomorphisms 
$\varphi_* : A^*\to B^*$, $\psi_* : B^* \to A^*$, 
$\alpha_* : A^* \to A^{*-1}$, $\beta_* : B^* \to B^{*-1}$, such that $\varphi_*$ and $\psi_*$ commute with $d_*$, and such that
for every $k \in {\bf N}$:
\medskip
\begin{itemize}
\item[ ] $d_{k-1} \circ \alpha_k + \alpha_{k+1}  \circ d_k = \mathrm{id} - \psi_k  \circ \varphi_k$ 
\item[ ] $d _{k-1}\circ \beta_k + \beta_{k+1}  \circ d_k = \mathrm{id} - \varphi_k  \circ \psi_k$,
\end{itemize}
\medskip
with the convention $A^{-1} = B^{-1} = 0$ and $d_{-1} = \alpha _0 = \beta _0 = 0$.
\end{itemize}

\begin{example}\label{bar_ex} The complex 
$$0 \to V \stackrel{d_{-1}}{\to} C^0(G,V) \stackrel{d_0}{\to} C^1(G,V)  \stackrel{d_1}{\to} C^2(G,V) \stackrel{d_2}{\to} ... $$
is a relatively injective strong $G$-resolution of $V$. 
First it is a strong resolution: one defines the $h_k$'s by $(h_k f) (x_0, ... , x_{n-1}) = f(1, x_0, ... , x_{n-1})$.
Secondly, $C^k(G, V)$ is isomorphic to $C^0\bigl(G, C^{k-1} (G,V)\bigr)$; and for every topological $G$-module $(\pi, W)$
the module $C^0(G, W)$ is relatively injective. 
Indeed given a $G$-injection $0 \to A \stackrel{i}{\to} B$ and a continuous linear map
$h: B \to A$ such that $h \circ i = \mathrm{id}$, every $G$-morphism $f: A \to C^0(G, W)$ extends to $\overline{f}: B \to C^0(G, W)$
by letting  for $b \in B$ and $x \in G$:
$$\overline{f} (b)(x)  = \pi (x) \Bigl(f\bigl(h(x^{-1}b)\bigr)(1)\Bigr).$$
\end{example}

\begin{example}\label{simplicial_ex} Suppose $G$ is a discrete group that acts properly discontinuously and freely
on a contractible locally finite simplicial complex $X$ by simplicial automorphisms.
Let $X^{(k)}$ be the set of $k$-simplices of $X$
and let $C^k (X, V)$ be the set of linear maps from the vector space 
${\bf R} X^{(k+1)}$ to $V$. Let $\delta _k: C^k (X, V) \to C^{k+1} (X,V)$
be defined as follows; for $f \in C^k (X, V)$, and $\sigma \in X^{(k+1)}$, 
$$(\delta _k f)(\sigma) = f(\partial \sigma).$$
The $C^k (X, V)$'s are 
$G$-modules for the following action: for $g \in G$, $f \in C^k(X,V)$ and $\sigma \in X^{(k)}$, 
$$(g \cdot f) (\sigma) = \pi (g) \bigl(f(g^{-1}(\sigma)\bigl).$$
The $\delta _k$'s are $G$-morphisms.
The complex 
$$0 \to V \stackrel{\delta_{-1}}{\to} C^0(X,V) \stackrel{\delta_0}{\to} C^1(X,V)  \stackrel{\delta_1}{\to} C^2(X,V) \stackrel{\delta_2}{\to} ... $$ 
is a relatively injective strong $G$-resolution of $V$. Indeed one defines the $h_k$'s by induction on $k$ by using
a retraction of $X$ to a point. To show that $C^k (X, V)$ is relatively injective for every $k$, one considers a fundamental domain $D$
in $X$, the associated set
$$X_D ^{(k)}:= \{\sigma \in X ^{(k)} ~\vert~ \mathrm{the~first~vertex~of}~\sigma ~\mathrm{belongs~to}~D\}$$
and the vector space $C_D ^k (X,V)$ of linear maps from ${\bf R} X _D ^{(k+1)}$ to $V$. Then one can show that $C^k(X,V) \simeq 
C^0\bigl(G, C_D ^k (X,V)\bigr)$. The latter is relatively injective (see the discussion in Example \ref{bar_ex} above).
\end{example}

We now want to see that relatively injective strong $G$-resolutions compute the continuous cohomology (and, in fact, we will quote a better statement needed later). 
More precisely, if $V$ and $W$ are topological $G$-modules, if we are given a relatively injective strong $G$-resolution $0 \to V \to A^*$ and a complex of $G$-modules $0 \to W \to B^*$, then (by d\'evissage and induction) any $G$-morphism $V \to W$ can be extended to a morphism of $G$-complexes $A^* \to B^*$ \cite[Chap.\! III, \textsection 1, Prop.\! 1.1 p.\! 177]{Gui}. 

\begin{proposition}\label{resolution_prop} 
Let $V$ be a topological $G$-module. 
Assume we are given two relatively injective strong $G$-resolutions $0 \to V \to A^*$ and $0 \to V \to B^*$ of $V$. 

\noindent {\rm (i)}~The complexes of invariants 
$$K_A^* : (A^0)^G \to (A^1)^G \to (A^2)^G \to \dots$$
\noindent and 
$$K_B^* : (B^0)^G \to (B^1)^G \to (B^2)^G \to \dots$$
\noindent are homotopically equivalent. 

\noindent {\rm (ii)} If we start from the identity map ${\rm id}_V$ in order to obtain an extension $u : A^* \to B^*$, hence a map $K_A^* \to K_B^*$, then the resulting morphism ${\rm H}^*(K_A^*) \to {\rm H}^*(K_B^*)$ is a topological isomorphism which does not depend on the choice of $u$. 
\end{proposition}

This is \cite[Chap.\! III, \textsection 1, Cor.\! 1.1 p.\! 177]{Gui} and the same holds for reduced cohomology. 
Since homotopy equivalent complexes have the same cohomology, one obtains in particular:

\begin{corollary}\label{resolution_cor} Suppose $0 \to V \to A^*$ is a relatively injective strong $G$-resolution of $V$. 
Then, the cohomology and the reduced cohomology of the complex
$(A^*)^G$ are topologically isomorphic to
${\rm H}_{\mathrm{ct}} ^* (G, V)$ and $\overline{{\rm H}_{\mathrm{ct}} ^*} (G, V)$, respectively.
\end{corollary}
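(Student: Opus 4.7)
The plan is to deduce Corollary \ref{resolution_cor} as an essentially immediate consequence of Proposition \ref{resolution_prop}, together with the general principle that a homotopy equivalence of complexes (in the continuous sense defined in the excerpt) induces topological isomorphisms on both cohomology and reduced cohomology.

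First, I would apply Proposition \ref{resolution_prop} to the pair consisting of the given strong injective resolution $0 \to V \to A^*$ and the bar resolution $0 \to V \to C^*(G,V)$ from Example \ref{bar_ex} (which was verified there to be strong injective). This produces continuous linear maps $\varphi_* : (A^*)^G \to C^*(G,V)^G$ and $\psi_* : C^*(G,V)^G \to (A^*)^G$ commuting with the differentials $d_*$, together with continuous linear homotopies $\alpha_*, \beta_*$ satisfying the identities in the definition of homotopy equivalence.

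Next, I would check that such a continuous homotopy equivalence descends to both cohomology and reduced cohomology. The points to verify are: (i) because $\varphi_k$ and $\psi_k$ commute with $d_*$, they send cocycles to cocycles and coboundaries to coboundaries, hence induce continuous linear maps between the cohomology groups; (ii) because $\varphi_k$ and $\psi_k$ are continuous, they also send $\overline{\im d_{k-1}}$ into $\overline{\im d_{k-1}}$, so they descend to continuous maps on reduced cohomology; (iii) the identities
\[
d_{k-1}\circ\alpha_k + \alpha_{k+1}\circ d_k = \mathrm{id} - \psi_k\circ\varphi_k,\qquad d_{k-1}\circ\beta_k + \beta_{k+1}\circ d_k = \mathrm{id} - \varphi_k\circ\psi_k
\]
show that on cocycles $\mathrm{id} - \psi_k\circ\varphi_k$ and $\mathrm{id} - \varphi_k\circ\psi_k$ land in $\im d_{k-1} \subset \overline{\im d_{k-1}}$, so the induced maps on (reduced) cohomology are mutual inverses. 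Since the cohomology of the complex $C^*(G,V)^G$ is by definition ${\rm H}^*_{\mathrm{ct}}(G,V)$ and its reduced cohomology is $\overline{{\rm H}^*_{\mathrm{ct}}}(G,V)$, the corollary follows.

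I do not expect any real obstacle: Proposition \ref{resolution_prop} carries the bulk of the content, and the present corollary is essentially a translation into the language of (reduced) continuous cohomology. The only small subtlety worth being explicit about is the reduced case, where one must use the continuity of $\varphi_*$ and $\psi_*$ to guarantee compatibility with the closure operation; this is why the definition of homotopy equivalence in the excerpt specifically requires the chain maps and homotopies to be continuous.
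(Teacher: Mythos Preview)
Your proposal is correct and follows essentially the same approach as the paper: apply Proposition~\ref{resolution_prop} to the given resolution and the bar resolution of Example~\ref{bar_ex}, then observe that a continuous homotopy equivalence induces topological isomorphisms on cohomology and reduced cohomology. The only cosmetic difference is that for the reduced case the paper passes through the identification $\overline{{\rm H}^k} = {\rm H}^k / \overline{\{0\}}$ and quotients the already-established topological isomorphism on ${\rm H}^k$, whereas you verify directly that the continuous chain maps respect $\overline{\im d_{k-1}}$; both arguments are equivalent.
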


\begin{proof} From Example \ref{bar_ex} and Proposition \ref{resolution_prop} above, the complexes $(A^*)^G$ and $C^*(G, V)^G$
are homotopy equivalent. Therefore, by standard arguments, their cohomological spaces (equipped with the quotient topology) 
are topologically isomorphic. By definition, the cohomology of $C^*(G, V)^G$ is ${\rm H}_{\mathrm{ct}} ^* (G, V)$. 
Thus the cohomology ${\rm H}^*\bigl((A^*)^G\bigr)$ of $(A^*)^G$ is topologically isomorphic to ${\rm H}_{\mathrm{ct}} ^* (G, V)$. 
The reduced cohomological spaces of $(A^*)^G$ and $C^*(G, V)^G$ are respectively topologically isomorphic to ${\rm H}^k\bigl((A^*)^G\bigr) / \overline{\{0\}}$
and ${\rm H}_{\mathrm{ct}} ^k (G, V) / \overline{\{0\}}$, where $\overline{\{0\}}$ denotes the closure of the null subspace.
Since ${\rm H}^k\bigl((A^*)^G\bigr)$ and ${\rm H}_{\mathrm{ct}} ^k (G, V)$ are topologically isomorphic, so are ${\rm H}^k\bigl((A^*)^G\bigr) / \overline{\{0\}}$ and ${\rm H}_{\mathrm{ct}} ^k (G, V) / \overline{\{0\}}$.
\end{proof}

\section{Continuous group $L^p$-cohomology}
\label{3}

The group $L^p$-cohomology is defined in this section (Definition \ref{group_L^p_def}). When the group is a finite 
$K(\pi, 1)$, we relate its group $L^p$-cohomology with the simplicial $L^p$-cohomology (Proposition \ref{simplicial_prop}).
For locally compact second countable topological groups, the group $L^p$-cohomology is isomorphic to the asymptotic $L^p$-cohomology
(Theorem \ref{cont_asymp_thm}). This implies Theorem \ref{intro-thm1} of the introduction.

\begin{definition} 
\label{group_L^p_def} 
Let $G$ be a locally compact second countable topological group, 
and $\mathcal H$ be a left-invariant Haar measure. Let $p \in (1, +\infty)$. 
The \emph{group $L^p$-cohomology} of $G$
is the continuous cohomology of $G$, as in Definition \ref{ell_pcont_def}, with coefficients in
the right-regular representation of $G$ on $L^p (G, \mathcal H)$, \emph{i.e.\!} the representation defined by
$$\bigl(\pi (g) u\bigr)(x) = u(xg) ~~\mathrm{~~for~~}~~ u \in L^p (G, \mathcal H) ~~\mathrm{~~and~~}~~ g, x \in G.$$
It will be denoted by ${\rm H}_{\mathrm{ct}} ^* \bigl(G, L^p (G)\bigr)$.
The \emph{reduced group $L^p$-cohomology} of $G$ is defined similarly and is denoted by $\overline{{\rm H}_{\mathrm{ct}} ^*} \bigl(G, L^p (G)\bigr)$.
When $G$ is discrete we omit the subscript ``ct'' and simply write ${\rm H} ^* \bigl(G, L^p (G)\bigr)$ and $\overline{{\rm H}^*} \bigl(G, L^p (G)\bigr)$.
\end{definition}
Observe that the right regular representation is isometric if and only if $\mathcal H$ is bi-invariant.

\subsection*{The discrete group case}
\label{simplicial_subsection} 
In this paragraph we relate the group $L^p$-cohomology of 
certain discrete groups with  
the simplicial $L^p$-cohomo-\ logy of some simplicial complexes (Proposition \ref{simplicial_prop}). 
This is standard material.
We present it for completeness and also because its proof can be seen as a model for the general case.
 
Let $X$ be a simplicial complex and let $X^{(k)}$ be the set of its $k$-simplices. We will always assume that $X$ has \emph{bounded geometry}
\emph{i.e.}\! that there is an $N \in {\bf N}$ such that each simplex intersects at most $N$ simplices of arbitrary dimension (in other words, $X$ is finite-dimensional and uniformly locally finite). 

Let $C^{k,p}(X)$ be the Banach space of linear maps $u: {\bf R} X ^{(k)} \to {\bf R}$ such that
$$\Vert u \Vert ^p := \sum _{\sigma \in X^{(k)}} \vert u(\sigma) \vert ^p < \infty.$$
The \emph{simplicial $L^p$-cohomology} of $X$ is the cohomology of the complex 
$$C^{0,p}(X) \stackrel{\delta _0}{\to} C^{1,p}(X) \stackrel{\delta _1}{\to} C^{2,p}(X) \stackrel{\delta _2}{\to} ... $$
where the $\delta_k$'s are defined as in Example \ref{simplicial_ex}. 
The reduced simplicial $L^p$-cohomology is defined similarly. They are  denoted by 
$L^p {\rm H}^*(X)$ and $L^p\overline{{\rm H}^*} (X)$ respectively.

\begin{proposition}\label{simplicial_prop}
Suppose that $G$ acts by simplicial automorphisms, properly discontinuously, freely and cocompactly, 
on a locally finite contractible simplicial complex $X$. Then the complexes $C^*\bigl(G,L^p(G)\bigr)^G$ and 
$C^{*,p}(X)$ are homotopy equivalent. In particular 
there are topological isomorphisms  
${\rm H} ^* \bigl( G, L^p(G) \bigr) \simeq L^p{\rm H}^* (X)$ and 
$\overline{{\rm H} ^*} \bigl( G, L^p(G) \bigr) \simeq  L^p\overline{{\rm H}^*} (X)$.
\end{proposition}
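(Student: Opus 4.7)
The plan is to apply Proposition~\ref{resolution_prop} to two strong injective $G$-resolutions of the module $V = L^p(G)$. The first is the bar resolution $0 \to V \to C^*(G,V)$ from Example~\ref{bar_ex}; the second is the simplicial resolution $0 \to V \to C^*(X,V)$ from Example~\ref{simplicial_ex}, which applies because $G$ acts properly discontinuously and freely on the locally finite contractible complex $X$. Proposition~\ref{resolution_prop} then gives a homotopy equivalence between the complexes of $G$-invariants $C^*(G,L^p(G))^G$ and $C^*(X,L^p(G))^G$.

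It remains to identify $C^*(X,L^p(G))^G$ with $C^{*,p}(X)$ as topological cochain complexes. I would define $\Phi: C^k(X,L^p(G))^G \to C^{k,p}(X)$ by evaluating at the neutral element, $\Phi(f)(\sigma) = f(\sigma)(1_G)$, with inverse $\Psi(\phi)(\sigma)(h) = \phi(h\sigma)$. The $G$-equivariance relation $f(g\sigma) = \pi(g)f(\sigma)$, combined with the right-regular action $(\pi(g)u)(h)=u(hg)$, forces $\Phi(f)(g\sigma) = f(\sigma)(g)$; using that $G$ acts freely so the orbit map $g \mapsto g\sigma$ is a bijection onto $G\sigma$, one obtains
$$\|f(\sigma)\|^p_{L^p(G)} \;=\; \sum_{h \in G} |f(\sigma)(h)|^p \;=\; \sum_{\tau \in G\sigma}|\Phi(f)(\tau)|^p.$$
Summing over a finite set of $G$-orbit representatives (cocompactness) shows that $\Phi(f) \in C^{k,p}(X)$ iff $f(\sigma_i) \in L^p(G)$ for each representative $\sigma_i$, and moreover the $\ell^p$-norms match. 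A direct check using $(\delta_k f)(\sigma) = f(\partial\sigma)$ shows that $\Phi$ commutes with the coboundaries.

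Combining the homotopy equivalence with this cochain isomorphism gives the asserted homotopy equivalence between $C^*(G,L^p(G))^G$ and $C^{*,p}(X)$, and the topological isomorphisms of (reduced) cohomology then follow by the argument used in the proof of Corollary~\ref{resolution_cor}. The main point to handle carefully is the topological aspect of $\Phi$: choosing orbit representatives $\sigma_1, \dots, \sigma_{n_k}$ in $X^{(k)}$ identifies both $C^k(X,L^p(G))^G$ (via $f \mapsto (f(\sigma_i))_i$) and $C^{k,p}(X)$ (via restriction to each orbit, which is a copy of $G$) with the same finite direct sum $\bigoplus_{i=1}^{n_k} \ell^p(G)$ of Banach spaces, and under these identifications $\Phi$ becomes the identity, so it is automatically a topological isomorphism.
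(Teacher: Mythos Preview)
Your proof is correct and follows essentially the same route as the paper: both invoke the homotopy equivalence between $C^*(G,L^p(G))^G$ and $C^*(X,L^p(G))^G$ coming from the two strong injective resolutions (Example~\ref{bar_ex} and Example~\ref{simplicial_ex}), and then identify $C^*(X,L^p(G))^G$ with $C^{*,p}(X)$ via the map $u \mapsto f$, $f(\sigma)(g)=u(g\sigma)$ (your $\Psi$). Your treatment of the topological isomorphism via a finite choice of orbit representatives is in fact slightly more explicit than the paper's ``with the norms expressions, one sees that it is a topological embedding''.
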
 

\begin{proof} 
According to Corollary \ref{resolution_cor} and Example \ref{simplicial_ex}, $C^*\bigl(G,L^p(G)\bigl)^G$ is homotopy equivalent to 
the complex
$$ 
C^0\bigl(X,L^p(G)\bigr)^G \stackrel{\delta_0}{\to} C^1\bigl(X,L^p(G)\bigr)^G \stackrel{\delta_1}{\to} C^2\bigl(X,L^p(G)\bigr)^G \stackrel{\delta_2}{\to} ... \ .
$$
In particular their cohomologies (reduced or not) are topologically isomorphic.
To prove the proposition it is enough to show the latter complex is topologically isomorphic to the complex $C^{*,p}(X)$.

To this end, one considers the map $\Phi: C^{k,p}(X) \to C^k\bigl(X,L^p(G)\bigr)$ defined by $\Phi (u) = f$ with
$f(\sigma)(g):= u(g\sigma)$. With the norms expressions, one sees that it is a topological embedding.
Its image is $C^k\bigl(X,L^p(G)\bigr)^G$. Indeed $\Phi (u)$ is clearly $G$-invariant. Moreover, for 
$f \in C^k\bigl(X,L^p(G)\bigr)^G$, one defines $u: X^{(k)} \to {\bf R}$ by $u(\sigma) = f(\sigma)(1)$. Since 
$f$ is invariant, one has $u (g \sigma) = f(\sigma)(g)$. Because $G$ acts properly discontinuously and 
cocompactly on $X$, by using the partition of $X^{(k)}$ into orbits, one gets that $u \in C^{k,p}(X)$.
\end{proof}


\begin{remark} \label{simplicial_remark} Equip every simplicial complex $X$ with the length metric obtained by identifying every simplex with the standard 
Euclidean one. A simplicial complex is called \emph{uniformly contractible} if it is contractible and if there exists a 
function $\rho: (0, +\infty) \to (0, +\infty)$
such that every ball $B(x,r) \subset X$ can be contracted to a point in $B\bigl(x,\rho (r)\bigr)$. 
Among bounded geometry uniformly contractible simplicial complexes, the simplicial $L^p$-cohomology (reduced or not)
is invariant under quasi-isometry. Indeed if $X$ and $Y$ are quasi-isometric 
bounded geometry uniformly contractible simplicial complexes, then the complexes $C^{*,p}(X)$ and $C^{*,p}(Y)$ are homotopy
equivalent (see \cite[p.\! 219]{G}, and \cite{BP2} for a detailed proof).
As a consequence, the above proposition implies that the group $L^p$-cohomology of fundamental groups 
of finite aspherical simplicial complexes
is invariant under quasi-isometry.   
We will give below another proof of this fact that applies in a much greater generality.
\end{remark}

\subsection*{The general case}
Suppose that $G$ is a locally compact second countable topological group. 
We will relate the group $L^p$-cohomology and the asymptotic $L^p$-cohomology of $G$.
The latter has been considered by Pansu in \cite{Pa95}, see also \cite{Genton}. It is a quasi-isometric invariant (see Theorem \ref{Pansu_thm} below). 

Following \cite{Pa95}, we define the asymptotic $L^p$-cohomology in the context of metric spaces. 
Let $(X, d)$ be a metric  space equipped with a Borel measure $\mu$. Suppose it satisfies the following ``bounded geometry''
condition. There exists increasing functions $v, V: (0, +\infty) \to (0, +\infty)$ such for every ball 
$B(x, r) \subset X$ one has 
$$v(r) \leqslant \mu \bigl(B(x,r)\bigr) \leqslant V(r).$$  
For $s >0$ and $k\in {\bf N}$, let
$\Delta _s ^{(k)} = \{(x_0, x_1, ... , x_k) \in X ^{k+1} ~\vert~  d(x_i, x_j) \leqslant s\}$.
Let $AS^{k,p}(X)$ be the set of the (classes of) functions $u: X^{k+1} \to {\bf R}$ such that for every
$s >0$ one has 
$$N_s (u) ^p:= \int _{\Delta _s ^{(k)}} \bigl\vert u (x_0, ... ,x_k) \bigr\vert ^p d\mu(x_0) ... d\mu (x_k) < \infty.$$
We equip $AS^{k,p}(X)$ with the topology induced by the set of the semi-norms $N_s$ $(s>0)$.

\begin{definition}\label{cont_asymp-def}
The asymptotic $L^p$-cohomology of $X$ is the cohomology of the complex 
$AS^{0,p}(X) \stackrel{d_0}{\to} AS^{1,p}(X) \stackrel{d_1}{\to} AS^{2,p}(X) \stackrel{d_2}{\to} ...$, 
where the $d_k$'s are defined as in (\ref{differential}).
The reduced asymptotic $L^p$-cohomology is defined similarly.
They are denoted by $L^p {\rm H}_{AS}^* (X)$
and $L^p \overline{{\rm H}_{AS}^*} (X)$ respectively.
\end{definition}
Asymptotic $L^p$-cohomology (reduced or not) is invariant under quasi-isometry. In fact one has 

\begin{theorem}[\cite{Pa95}] \label{Pansu_thm} 
Let $X$ and $Y$ be metric spaces.
We assume that each of them admits a Borel measure with respect to which it is of bounded geometry (as defined above). 
Let $F : X \to Y$ be a quasi-isometry. 
Then $F$ induces a homotopy equivalence between the complexes $AS^{*,p}(X)$ and $AS^{*,p}(Y)$. 
Moreover the associated isomorphism of graded topological vector spaces $F^* : L^p {\rm H}_{AS}^* (Y) \to L^p {\rm H}_{AS}^* (X)$ depends only on the bounded perturbation class of $F$. 
\end{theorem}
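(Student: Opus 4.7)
The plan is to pull back cochains along measurable representatives of $F$ and of a quasi-inverse, and then to produce the two required chain homotopies by a ``prism'' formula. Since elements of $AS^{k,p}$ are classes of measurable functions, I would first replace $F$ by a Borel map $\widetilde F$ at bounded distance from it: pick a maximal $\varepsilon$-separated net $\{x_i\} \subset X$ (which exists by bounded geometry), a Borel partition $\{A_i\}$ with $A_i \subset B(x_i,\varepsilon)$, and set $\widetilde F \equiv F(x_i)$ on $A_i$. Construct a measurable quasi-inverse $\widetilde G : Y \to X$ in the same way. Both maps retain quasi-isometry constants of $F$, and the compositions $\widetilde G \circ \widetilde F$ and $\widetilde F \circ \widetilde G$ lie at bounded distance from the respective identities.

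Define $F^{\#} : AS^{k,p}(Y) \to AS^{k,p}(X)$ by
\begin{equation*}
(F^{\#} u)(x_0,\ldots,x_k) = u\bigl(\widetilde F(x_0),\ldots,\widetilde F(x_k)\bigr),
\end{equation*}
and similarly $G^{\#}$; commutation with $d_k$ is formal. The key analytic step is continuity: if $(x_0,\ldots,x_k) \in \Delta_s^{(k)}(X)$ and $F$ has quasi-isometry constants $(\lambda,c)$, the image tuple lies in $\Delta_{\lambda s + c}^{(k)}(Y)$. The desired estimate $N_s(F^{\#} u) \leqslant C \cdot N_{\lambda s + c}(u)$ follows from a change of variables $y_i = \widetilde F(x_i)$, once one observes that $\widetilde F$ is uniformly bounded-to-one in the measure sense: by two-sided bounded geometry, $\widetilde F^{-1}(B(y,r))$ has $\mu_X$-volume controlled by a function of $r$ alone. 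This is the main technical step; it amounts to disintegrating the integral over $\Delta_s^{(k)}(X)$ along the Borel partition $\{A_{i_0} \times \cdots \times A_{i_k}\}$ and comparing it with a discrete sum over net tuples, where bounded geometry on both sides is precisely what controls the comparison constant uniformly in $s$.

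The homotopy ingredient is a simplicial prism: for measurable maps $\phi_0,\phi_1$ at pointwise bounded distance,
\begin{equation*}
(H u)(x_0,\ldots,x_{k-1}) = \sum_{i=0}^{k-1} (-1)^i u\bigl(\phi_0(x_0),\ldots,\phi_0(x_i),\phi_1(x_i),\ldots,\phi_1(x_{k-1})\bigr)
\end{equation*}
satisfies $d \circ H + H \circ d = \phi_1^{\#} - \phi_0^{\#}$ by the standard combinatorial identity. Its continuity on $AS^{*,p}$ is proved exactly as above, since all pairwise distances in the augmented tuples stay bounded in terms of the original $s$ plus $\sup d(\phi_0,\phi_1)$. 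Applying this with $(\phi_0,\phi_1) = (\mathrm{id}_X, \widetilde G \circ \widetilde F)$ and its symmetric version on $Y$ yields $\widetilde F^{\#} \circ \widetilde G^{\#} \simeq \mathrm{id}$ and $\widetilde G^{\#} \circ \widetilde F^{\#} \simeq \mathrm{id}$, so $\widetilde F^{\#}$ is a homotopy equivalence. Finally, the dependence only on the bounded perturbation class of $F$ follows from the same prism formula, now applied with $(\phi_0,\phi_1) = (\widetilde F_0, \widetilde F_1)$ for measurable representatives of two equivalent quasi-isometries, giving $\widetilde F_0^{\#} \simeq \widetilde F_1^{\#}$.
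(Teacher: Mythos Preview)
The paper does not supply its own proof of this statement: it is quoted from \cite{Pa95} and the reader is referred there and to \cite{Genton} for details. So there is no in-paper argument to compare against; I evaluate your proposal on its own merits.

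There is a genuine gap. Your Borel representative $\widetilde F$ is \emph{piecewise constant}: it sends each cell $A_i$ to the single point $F(x_i)$. Consequently $(\widetilde F)^{k+1}$ maps $X^{k+1}$ onto a countable set in $Y^{k+1}$, and the pushforward of $\mu_X^{k+1}$ under it is purely atomic. The expression $(F^{\#}u)(x_0,\dots,x_k)=u(\widetilde F(x_0),\dots,\widetilde F(x_k))$ therefore depends on the values of $u$ on a $\mu_Y^{k+1}$-null set, hence is not well-defined on $L^p$-classes. Equivalently, your ``change of variables'' paragraph collapses the integral $N_s(F^{\#}u)^p$ to a discrete sum $\sum \mu_X(A_{i_0})\cdots\mu_X(A_{i_k})\,|u(F(x_{i_0}),\dots,F(x_{i_k}))|^p$, and there is no inequality bounding such pointwise sums by the continuous seminorm $N_{\lambda s+c}(u)$; two representatives of the same class in $AS^{k,p}(Y)$ can give arbitrarily different sums. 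Bounded geometry controls only volumes of preimages of balls, not this comparison.

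The standard repair (and what is done in \cite{Pa95}, \cite{Genton}) is to build averaging into the pullback: fix $r>0$ and set
\[
(F^{\#}u)(x_0,\dots,x_k)=\prod_{j=0}^{k}\frac{1}{\mu_Y\bigl(B(F(x_j),r)\bigr)}\int_{B(F(x_0),r)}\!\!\cdots\int_{B(F(x_k),r)} u(y_0,\dots,y_k)\,d\mu_Y(y_0)\cdots d\mu_Y(y_k).
\]
This is well-defined on classes, and the estimate $N_s(F^{\#}u)\leqslant C\,N_{\lambda s+c+2r}(u)$ follows from Jensen, Fubini and the two-sided volume bounds. The prism operator must be averaged in the same way; then your combinatorial identity and the rest of the argument go through. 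An equivalent route is to first show that $AS^{*,p}(X)$ retracts onto the $\ell^p$-complex on a net (again via averaging), and then pull back at the discrete level where your original formula is legitimate.
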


See also \cite{Genton} for a more detailed proof. In the next section we will prove the following theorem. 

\begin{theorem}\label{cont_asymp_thm} 
Suppose $G$ is a locally compact second countable topological group, equipped with a left-invariant proper metric. 
Then the complexes $C^*\bigl( G, L^p(G) \bigr)^G$ and $AS^{*,p}(G)$ are homotopy equivalent. 
In particular there exist topological isomorphisms:  
$${\rm H}_{\mathrm{ct}}^* \bigl(G, L^p (G)\bigr) \simeq L^p {\rm H}_{AS}^* (G) \mathrm{~~~and~~~} 
\overline{{\rm H}_{\mathrm{ct}} ^*} \bigl(G, L^p (G)\bigr) \simeq L^p \overline{{\rm H}_{AS}^*} (G).$$
\end{theorem}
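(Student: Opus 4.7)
The strategy is to exhibit $AS^{*,p}(G)$, up to a natural isomorphism of complexes, as the complex of $G$-invariants of a strong injective $G$-resolution of $L^p(G)$. Then Proposition~\ref{resolution_prop}, applied to this resolution and to the bar resolution of Example~\ref{bar_ex}, will give a homotopy equivalence between $\bigl(C^*(G,L^p(G))\bigr)^G$ and $AS^{*,p}(G)$, from which the topological isomorphism on (reduced) cohomology follows exactly as in the proof of Corollary~\ref{resolution_cor}. The intermediate resolution $\mathcal{B}^*$ will be a Blanc-style measurable enlargement of the bar resolution: for each $k \geqslant 0$, I would let $\mathcal{B}^k$ be the space of (classes of) measurable maps $f : G^{k+1} \to L^p(G)$ satisfying a suitable local integrability condition (essentially, that $(x_0, \dots, x_k) \mapsto \|f(x_0, \dots, x_k)\|_{L^p(G)}$ is locally $L^p$), endowed with the same $G$-action and coboundary~(\ref{differential}) as in Section~\ref{2}. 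The natural map $C^*(G, L^p(G)) \hookrightarrow \mathcal{B}^*$ is a $G$-equivariant morphism of complexes.

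The next step would be to verify that $0 \to L^p(G) \to \mathcal{B}^0 \to \mathcal{B}^1 \to \dots$ is a strong injective $G$-resolution. Acyclicity would follow from the same contraction as in Example~\ref{bar_ex}, namely $(h_k f)(x_0, \dots, x_{k-1}) := f(1, x_0, \dots, x_{k-1})$, which makes sense at the measurable level (using Fubini to ensure that the restriction to the slice $\{x_0 = 1\}$ is well-defined almost everywhere with the right local regularity). Strong injectivity of each $\mathcal{B}^k$ would then be obtained by transporting the formula $\overline{f}(b)(x) = \pi(x)\bigl(f\bigl(h(x^{-1} b)\bigr)\bigr)(1)$ of Example~\ref{bar_ex} to the measurable setting, using the identification $\mathcal{B}^k \simeq \mathcal{B}^0\bigl(G^k, L^p(G)\bigr)$; the measurable nature of the space is exactly what is needed so that the resulting extension is a continuous linear map in the ambient sense.

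I would then identify $(\mathcal{B}^*)^G$ with $AS^{*,p}(G)$. Unwinding the $G$-invariance condition (with the diagonal left action on $G^{k+1}$ combined with the right-regular representation on $L^p(G)$) yields a bijection between invariant $f \in \mathcal{B}^k$ and measurable $u : G^{k+1} \to \mathbb{R}$, via $f(x_0, \dots, x_k)(y) = u(y x_0, \dots, y x_k)$. Using the left-invariance of the Haar measure and of the metric $d$, together with Fubini and the change of variables $(y, x_0, \dots, x_k) \mapsto (y, y x_0, \dots, y x_k)$, the local integrability condition defining $\mathcal{B}^k$ translates into the finiteness of $N_s(u)$ for every $s > 0$: integrating $\|f(x_0, \dots, x_k)\|_{L^p}^p$ over a compact slice of a thickening of the diagonal becomes, after the substitution, an integral of $|u|^p$ over a portion of $\Delta_s^{(k)}$. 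The alternating-sum coboundaries match under this correspondence, which identifies $(\mathcal{B}^*)^G \cong AS^{*,p}(G)$ as complexes of topological vector spaces, and Proposition~\ref{resolution_prop} then concludes the proof.

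The main obstacle, as I see it, is to choose the precise definition of $\mathcal{B}^k$ so that three requirements hold \emph{simultaneously}: $\mathcal{B}^*$ is a strong injective $G$-resolution (which forces enough measurable flexibility for the extension formula of Example~\ref{bar_ex} to make sense), its $G$-invariants recover exactly $AS^{k,p}(G)$ with its canonical family of semi-norms $N_s$, and the topologies transport correctly through the identification. A secondary technical point, which does not change the overall shape of the argument but needs to be handled with care, is the non-unimodular case: the modular function of $G$ enters both the definition of the right-regular representation on $L^p(G)$ and the change of variables in the identification step, and one must check that it is absorbed consistently on both sides of the homotopy equivalence.
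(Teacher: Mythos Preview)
Your overall strategy is exactly the paper's: take the Blanc resolution $L^p_{\mathrm{loc}}(G^{*+1},L^p(G))$ (this is your $\mathcal B^*$), identify its $G$-invariants with $AS^{*,p}(G)$, and then invoke Proposition~\ref{resolution_prop}. The paper does not reprove that $L^p_{\mathrm{loc}}(G^{*+1},V)$ is a strong injective resolution; it quotes this as Lemma~\ref{Blanc_lemma} from \cite{Bl}. The identification of invariants is carried out in two steps (Lemma~\ref{cont_asymp_lemma} and the unnamed lemma following it), and the modular function indeed appears exactly where you anticipate, in comparing the semi-norms $N_{K^U_s}$ and $N_s$.

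There is, however, a real gap in your sketch of the resolution step. For classes in $L^p_{\mathrm{loc}}(G^{k+1},V)$, the formula $(h_kf)(x_0,\dots,x_{k-1})=f(1,x_0,\dots,x_{k-1})$ is \emph{not} well-defined: the slice $\{1\}\times G^k$ has measure zero, and Fubini only tells you that for \emph{almost every} value of the first variable the slice function lies in $L^p_{\mathrm{loc}}$, not for the specific value $1$. The same objection applies to your proposed extension formula for strong injectivity, which also evaluates at $1$. Blanc's actual contracting homotopy replaces point evaluation by integration against a compactly supported probability density on $G$; this is what makes the measurable enlargement work. A parallel subtlety arises in the identification step: given an invariant $f\in L^p_{\mathrm{loc}}(G^{k+1},L^p(G))^G$, one would like to set $u(x)=f(x)(1)$, but again $f(x)\in L^p(G)$ is only a class. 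The paper (proof of Lemma~\ref{cont_asymp_lemma}) handles this by a Lebesgue-differentiation-type limit over shrinking neighbourhoods of $1$, and then checks that the resulting $u$ satisfies $u(gx)=f(x)(g)$ almost everywhere. Once you replace your two point-evaluations by these averaging arguments, your outline becomes the paper's proof.
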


In combination with Theorem \ref{Pansu_thm} one obtains the following result, which implies Theorem \ref{intro-thm1}
of the introduction:

\begin{corollary}
\label{cont_asymp_cor} 
Let $G_1$ and $G_2$ be locally compact second countable topological groups, equipped with left-invariant proper metrics and let $F : G_1 \to G_2$ be a quasi-isometry. 
Then $F$ induces a homotopy equivalence between the complexes 
$C^*\bigl(G_1, L^p(G_1)\bigr)^{G_1}$ and $C^*\bigl(G_2, L^p(G_2)\bigr)^{G_2}$.
Moreover the associated isomorphism of graded topological vector spaces
$F ^* :{\rm H}^*_{\mathrm {ct}} \bigl( G_2, L^p(G_2) \bigr) \to {\rm H}^*_{\mathrm {ct}} \bigl( G_1, L^p(G_1) \bigr)$  
depends only the bounded perturbation class of $F$. 
\end{corollary}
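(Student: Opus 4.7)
The plan is to derive this corollary by pure composition of Theorem \ref{cont_asymp_thm} with Pansu's Theorem \ref{Pansu_thm}. Write $C_{G}^{*} := C^{*}\bigl(G, L^{p}(G)\bigr)^{G}$ for brevity. For $i \in \{1, 2\}$, Theorem \ref{cont_asymp_thm} provides a homotopy equivalence $\Phi_{i} : C_{G_{i}}^{*} \to AS^{*,p}(G_{i})$ which is intrinsic to $G_{i}$ (in particular independent of $F$); let $\Psi_{1}$ be a homotopy inverse of $\Phi_{1}$.

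Before invoking Theorem \ref{Pansu_thm} I need to check that each triple $(G_{i}, d_{i}, \mu_{i})$, with $d_{i}$ a left-invariant proper metric and $\mu_{i}$ a left Haar measure, satisfies Pansu's bounded geometry assumption. By left-invariance of both the metric and the Haar measure, every ball $B(x, r)$ in $G_{i}$ has measure $\mu_{i}\bigl(B(e, r)\bigr)$; properness makes $B(e, r)$ relatively compact, so this common value is a finite nondecreasing function of $r$, which can be used simultaneously as $v_{i}$ and $V_{i}$.

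Having verified this, Theorem \ref{Pansu_thm} applied to $F$ produces a homotopy equivalence $F^{AS} : AS^{*,p}(G_{2}) \to AS^{*,p}(G_{1})$, and the composite $\Psi_{1} \circ F^{AS} \circ \Phi_{2} : C_{G_{2}}^{*} \to C_{G_{1}}^{*}$ is then a homotopy equivalence as a composition of three. Passing to (reduced) cohomology, this composite realises $(\Phi_{1}^{*})^{-1} \circ (F^{AS})^{*} \circ \Phi_{2}^{*}$, where $\Phi_{i}^{*}$ is the canonical topological isomorphism from Theorem \ref{cont_asymp_thm} and $(F^{AS})^{*}$ is Pansu's induced isomorphism. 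Since $(F^{AS})^{*}$ depends only on the bounded perturbation class of $F$ (this is the content of the second half of Theorem \ref{Pansu_thm}) and the $\Phi_{i}^{*}$ are $F$-independent, the induced map $F^{*}$ on continuous cohomology, and likewise on reduced continuous cohomology, depends only on the bounded perturbation class of $F$.

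I do not anticipate a substantive obstacle here: all the genuine work already lives in Theorem \ref{cont_asymp_thm} (whose proof, announced for Section \ref{4}, has to compare the bar complex with coefficients in $L^{p}(G)$ to the asymptotic cochain complex) and in Pansu's theorem. The only step in the present corollary that requires explicit verification is the bounded geometry condition for $(G_{i}, d_{i}, \mu_{i})$ above; everything else is formal, following from functoriality of the induced maps on cohomology and the fact that a composition of homotopy equivalences is itself a homotopy equivalence.
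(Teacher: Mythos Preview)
Your proposal is correct and follows exactly the route the paper intends: the corollary is stated there as an immediate combination of Theorem~\ref{cont_asymp_thm} with Pansu's Theorem~\ref{Pansu_thm}, with no further argument given. Your explicit verification of the bounded geometry hypothesis for $(G_i, d_i, \mu_i)$ is a useful addition that the paper leaves implicit.
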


\begin{remark} Suppose $X$ is a bounded geometry simplicial complex. In general its asymptotic and simplicial
$L^p$-cohomologies are different (for example the asymptotic $L^p$-cohomology of a finite simplicial complex is trivial
since it is quasi-isometric to a point).
Pansu \cite{Pa95} asked whether the asymptotic and the simplicial $L^p$-cohomologies coincide for 
uniformly contractile bounded geometry simplicial complexes.
He proved that it is indeed the case for those which are in addition non-positively curved; for these spaces the complexes 
$C^{*,p}(X)$ and $AS^{*,p}(X)$ are homotopy equivalent.
\end{remark}

\section{Proof of quasi-isometric invariance}
\label{4}

As explained in the previous section the quasi-isometric invariance of the group $L^p$-cohomology 
is a consequence of Theorem \ref{cont_asymp_thm}. We prove this theorem in this section. 
The proof is inspired by the one of Proposition \ref{simplicial_prop}. 
It will use a relatively injective strong $G$-resolution 
(Lemma \ref{Blanc_lemma}) that appears in Blanc
\cite{Bl}.
As already mentioned in the introduction, Theorem \ref{cont_asymp_thm} was proved by G.\! Elek \cite{Elek} for finitely generated groups. In degree $1$, it was established by R.\! Tessera \cite{T}. R.~Sauer and M.~Schr\"odl obtained a result very similar to Theorem \ref{cont_asymp_thm} by using the same strategy -- see \cite[Th.\! 10]{SaSc}. They applied it to show that vanishing of $\ell^2$-Betti numbers is a coarse equivalence invariant for unimodular locally compact second countable groups.

Suppose that the topological vector space $V$ is Fr\'echet (\emph{i.e.}\! metrizable and complete).
Let $X$ be a locally compact second countable topological space endowed with a Radon measure $\mu$, \emph{i.e.}\! a Borel measure which is finite on compact subsets.
Let $p \in (1, +\infty)$. We denote by $L_{\mathrm{loc}}^p(X,V)$ the set of (classes of) measurable functions
$f: X \to V$ such that for every compact $K \subset X$ and every semi-norm $N$ on $V$ defining its topology,
one has 
$$\Vert f \Vert _{K,N}:= \Bigl( \int _K N\bigl(f(x)\bigr) ^p d\mu (x) \Bigr) ^{\frac{1}{p}} < \infty.$$
Suppose that $G$ acts on $X$ continuously by preserving $\mu$.
Equipped with the set of semi-norms $\Vert \cdot \Vert_{K,N}$, the vector space $L_{\mathrm{loc}}^p(X,V)$ is a Fr\'echet $G$-module for the action $(g \cdot f) (x) = \pi (g) \bigl(f(g^{-1} x)\bigr)$,  see \cite{Bl}. 
Let $\mathcal H$ be a left-invariant Haar measure on $G$; it is a Radon measure. Denote by $G^k$ the cartesian product of $k$ copies of $G$
equipped with the product measure. 

\begin{lemma}[\cite{Bl}]
\label{Blanc_lemma} 
The complex \\
$0 \to V \stackrel{d_{-1}}{\to} L_{\mathrm{loc}}^p(G, V) \stackrel{d_0}{\to} L_{\mathrm{loc}}^p(G^2, V) 
\stackrel{d_1}{\to} L_{\mathrm{loc}}^p(G^3, V) \stackrel{d_2}{\to} ... $, with the $d_k$'s defined as in (\ref{differential}), is a relatively injective strong $G$-resolution of $V$.
\end{lemma}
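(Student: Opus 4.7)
The plan is to follow the template of Example \ref{bar_ex}, replacing the pointwise evaluation at the identity (which is not defined on $L^p_{\mathrm{loc}}$-classes) by integration against a fixed compactly supported probability measure $\nu$ on $G$; for concreteness, take $\nu = \mathcal{H}\vert_U / \mathcal{H}(U)$ for a compact neighborhood $U$ of the identity. Two things must be shown: (A) the complex admits a continuous linear contracting homotopy, and (B) each module $L^p_{\mathrm{loc}}(G^{k+1}, V)$ is strong injective.

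For (A), I would define
$$(h_k f)(x_0, \dots, x_{k-1}) := \int_G f(u, x_0, \dots, x_{k-1}) \, d\nu(u),$$
with $h_0 f := \int_G f(u)\, d\nu(u) \in V$. Continuity in the defining seminorms of $L^p_{\mathrm{loc}}$ follows from Jensen's inequality together with compactness of $\mathrm{supp}\,\nu$. The identity $h_0 \circ d_{-1} = \mathrm{id}_V$ is immediate because $d_{-1} v$ is the constant function $u \mapsto v$ and $\nu$ is a probability measure, and the chain-homotopy identity $d_{k-1} \circ h_k + h_{k+1} \circ d_k = \mathrm{id}$ is a direct Fubini calculation from (\ref{differential}): the integrated face-removal summands of $h_{k+1} \circ d_k$ cancel term by term with those of $d_{k-1} \circ h_k$, leaving only the $u$-free summand of $d_k f$, which is $f$ itself.

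For (B), I would first use Fubini to produce a canonical isomorphism of topological $G$-modules $L^p_{\mathrm{loc}}(G^{k+1}, V) \cong L^p_{\mathrm{loc}}\bigl(G, L^p_{\mathrm{loc}}(G^k, V)\bigr)$, the right-hand side being endowed with the combined $G$-action (translation on the outer $G$-factor together with the pre-existing action on the coefficients). This reduces the claim to showing that $L^p_{\mathrm{loc}}(G, W)$ is strong injective for an arbitrary Fr\'echet $G$-module $W$. Given a $G$-injection $i: A \to B$ of Fr\'echet $G$-modules with continuous linear retraction $s: B \to A$ and a $G$-morphism $f: A \to L^p_{\mathrm{loc}}(G, W)$, I would mimic the formula of Example \ref{bar_ex} and set
$$\bar f(b)(x) := \pi(x)\bigl( f(s(x^{-1} b))(e) \bigr).$$
The $G$-equivariance of $\bar f$ and the compatibility $\bar f \circ i = f$ (via $s \circ i = \mathrm{id}_A$ and the $G$-equivariance of $f$) are then formal, exactly as in the continuous case.

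The hardest part will be making the formula for $\bar f$ rigorous, since pointwise evaluation at the identity is undefined on an $L^p_{\mathrm{loc}}$-class. To handle it I would interpret $(x, b) \mapsto f(s(x^{-1} b))$ as a jointly measurable Fr\'echet-valued function and, via Fubini, as a Borel kernel on $G \times B \times G$ with values in $W$; from this kernel the class $\bar f(b) \in L^p_{\mathrm{loc}}(G, W)$ is extracted by a disintegration argument exploiting the structure of the diagonal in the innermost $G$-variable. This measure-theoretic step is the technical heart of the construction and is the content of Blanc's original argument in \cite{Bl}. Once it is in place, the remaining verifications of (A) and (B) combine to prove the lemma.
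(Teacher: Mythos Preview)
The paper does not prove this lemma; it is quoted from Blanc \cite{Bl} without argument, so your sketch already goes further than the text you are comparing against.

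Part (A) is correct and is exactly the standard replacement of ``evaluate at $1$'' by ``integrate against a compactly supported probability $\nu$''.

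In part (B) you correctly isolate the obstruction---the formula from Example~\ref{bar_ex} uses pointwise evaluation at $e$, which is meaningless on $L^p_{\mathrm{loc}}$-classes---but your proposed resolution (``disintegration exploiting the structure of the diagonal'') is too vague, and the diagonal picture is misleading: restricting a class in $L^p_{\mathrm{loc}}(G\times G,W)$ to the diagonal is exactly the ill-defined operation you are trying to avoid. The actual fix is closer in spirit to what you already did in (A). For fixed $b\in B$, the map $x\mapsto f\bigl(s(xb)\bigr)$ is continuous from $G$ to $L^p_{\mathrm{loc}}(G,W)$ and so defines a class $\Phi_b\in L^p_{\mathrm{loc}}(G\times G,W)$ with $\Phi_b(x,z)=f\bigl(s(xb)\bigr)(z)$. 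After the measure-class-preserving change of variables $(y,z)\mapsto(zy^{-1},z)$, the expression
\[
(y,z)\ \longmapsto\ \pi(yz^{-1})\Bigl(f\bigl(s(zy^{-1}b)\bigr)(z)\Bigr)
\]
is a well-defined class in $L^p_{\mathrm{loc}}(G\times G,W)$, and integrating in $z$ against $\nu$ produces $\bar f(b)\in L^p_{\mathrm{loc}}(G,W)$. A direct computation using the $G$-equivariance of $f$ shows that on $i(A)$ the $z$-dependence cancels identically, so $\bar f\circ i=f$ on the nose; equivariance of $\bar f$ is formal. No diagonal restriction is involved. This is essentially Blanc's argument, so your instinct to defer to \cite{Bl} for the technical step is reasonable, but the heuristic you wrote down would not lead you to the right construction.
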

 
In the special case $V = L^p(G)$, one can express the spaces $L_{\mathrm{loc}}^p(X, V)^G$ as follows.

\begin{lemma}\label{cont_asymp_lemma} Let $X,\mu, G, \mathcal H$ as above. Let 
$E$ be the topological vector space that consists of the (classes of the) Borel functions $u: X \to {\bf R}$
such that for every compact subset $K \subset X$ one has
$$\Vert u \Vert _K ^p:= \int_G \int_K \vert u(gx) \vert^p d\mu(x)d\mathcal H (g) < \infty .$$
Then $E$ is topologically isomorphic to the Fr\'echet space $L_{\mathrm{loc}}^p\bigl(X, L^p(G)\bigr)^G$.
\end{lemma}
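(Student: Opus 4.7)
The plan is to define the natural map
\[
\Phi: E \to L_{\mathrm{loc}}^p\bigl(X, L^p(G)\bigr)^G, \qquad \Phi(u)(x)(g) := u(gx),
\]
and to verify that it is a seminorm-preserving bijection. The $G$-equivariance is forced by the shape of the action: since $(g_0 \cdot f)(x) = \pi(g_0)\bigl(f(g_0^{-1}x)\bigr)$ with $\pi$ the right-regular representation on $L^p(G)$, one checks $(g_0 \cdot \Phi(u))(x)(h) = \Phi(u)(g_0^{-1}x)(hg_0) = u(hg_0 \cdot g_0^{-1}x) = u(hx) = \Phi(u)(x)(h)$. Well-definedness and the correspondence of seminorms then both follow from a single Fubini computation: for every compact $K \subset X$,
\[
\Vert \Phi(u) \Vert_{K,N}^p = \int_K \int_G |u(gx)|^p\, d\mathcal H(g)\, d\mu(x) = \int_G \int_K |u(gx)|^p\, d\mu(x)\, d\mathcal H(g) = \Vert u \Vert_K^p,
\]
so that $\Phi$ is a continuous, seminorm-preserving injection into the invariants.

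For surjectivity, I would start from $f \in L_{\mathrm{loc}}^p\bigl(X, L^p(G)\bigr)^G$ and pass to the two-variable avatar $\tilde f(x, g) := f(x)(g)$, an a.e.\ defined measurable function on $X \times G$. Because $G$ preserves $\mu$, the map $\Psi: (x, h) \mapsto (hx, h)$ is a bimeasurable measure-preserving bijection of $X \times G$, and I would set $\phi(y, h) := \tilde f(h^{-1}y, h)$. The invariance of $f$, which reads $\tilde f(x, h) = \tilde f(g_0^{-1}x, hg_0)$ a.e.\ in $(x,h)$ for every $g_0$, translates into $\phi(y, h) = \phi(y, hg_0)$. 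Taking $u(y)$ to be the (eventually shown) a.e.-constant value of $\phi(y, \cdot)$, one obtains $\tilde f(x, h) = u(hx)$ a.e., whence $\Phi(u) = f$, and the seminorm identity above forces $u \in E$.

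The main technical hurdle lies in this last descent. The $G$-invariance only supplies the equality $\phi(y, h) = \phi(y, hg_0)$ a.e.\ in $(y,h)$ \emph{for each individual} $g_0$, not a.e.\ on $X \times G \times G$. The plan is to invoke Fubini on the triple product: the set $A = \{(y, h, g_0): \phi(y, h) = \phi(y, hg_0)\}$ has full measure, obtained by integrating the pointwise statement over $g_0$. Slicing at a fixed $y$ and performing the change of variables $k = hg_0$, whose legitimacy relies on the left-invariance of $\mathcal H$, then yields $\phi(y, h) = \phi(y, k)$ for a.e.\ $(h, k)$, for a.e.\ $y$---exactly the a.e.\ constancy needed to define $u$. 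Once this measure-theoretic point is settled, the two-sided seminorm identity automatically upgrades $\Phi$ to a topological isomorphism of Fréchet spaces, with the Fréchet structure of $E$ inherited from that of $L_{\mathrm{loc}}^p\bigl(X, L^p(G)\bigr)^G$.
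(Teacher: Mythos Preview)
Your proposal is correct and tracks the paper's argument closely: the same map $\Phi(u)(x)(g)=u(gx)$, the same Fubini--Tonelli computation showing $\Phi$ is a seminorm-preserving embedding, and the same check of $G$-invariance. The only genuine divergence is in the surjectivity step.

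The paper recovers $u$ from an invariant $f$ by a Lebesgue-type differentiation: it sets
\[
u(x)=\limsup_{n\to\infty}\frac{1}{\mathcal H(U_n)}\int_{U_n} f(x)(h^{-1})\,d\mathcal H(h)
\]
over a decreasing neighbourhood basis $(U_n)$ of $1$, and then uses left-invariance of $\mathcal H$ together with Lebesgue's differentiation theorem to identify $u(gx)$ with $f(x)(g)$ almost everywhere. Your route---passing to $\phi(y,h)=\tilde f(h^{-1}y,h)$ and using Fubini on $X\times G\times G$ to extract a.e.\ constancy of $\phi(y,\cdot)$---is a valid and slightly more elementary alternative, since it avoids any differentiation theorem. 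One small point you should make explicit: once you have $\phi(y,h)=\phi(y,k)$ for a.e.\ $(h,k)$ and a.e.\ $y$, you still need a \emph{measurable} choice of $u$. The cleanest fix is to set $u(y)=\mathcal H(C)^{-1}\int_C \phi(y,h)\,d\mathcal H(h)$ for a fixed compact $C\subset G$ of positive measure; this is measurable in $y$ by Fubini, and the local $L^p$ hypothesis on $f$ (after the change of variables $x=h^{-1}y$, using that $C^{-1}K$ is compact) guarantees the integral is finite for a.e.\ $y$.
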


\begin{proof} Let $u \in E$. Since $u$ is Borel and since the map 
$$\varphi: (g,x) \in G \times X \mapsto gx \in X$$ 
is continuous,
the function $u \circ \varphi$ is Borel too, and its class depends only on the class 
of $u$. Moreover $u \circ \varphi = 0$ almost everywhere if and only if $u =0$ almost everywhere.
Indeed, given $A \subset X$, one has $\varphi ^{-1} (A)  = \cup _{g \in G} (\{g\} \times g^{-1}A)$,
and thus $(\mathcal H \times \mu) \bigl(\varphi ^{-1} (A)\bigr) =0$ if and only if $\mu (A) =0$.

Define $\Phi: E \to L_{\mathrm{loc}} ^p \bigl(X, L^p(G)\bigr)$ by $\Phi (u) = f$ with
$$f(x)(g):= u(gx) = (u \circ \varphi) (g,x) ~~\mathrm{for~~every}~~ x \in X, g \in G.$$
An element $f \in L_{\mathrm{loc}} ^p \bigl(X, L^p(G)\bigr)$  is null if and only if 
$f(x)(g) =0$ for almost all $(g,x) \in G \times X$. Therefore the above discussion in combination with the semi-norm expressions and the Fubini-Tonelli 
theorem imply that
$\Phi$ is a topological embedding.
It remains to prove that its image is the subspace $L_{\mathrm{loc}} ^p \bigl(X, L^p(G)\bigr)^G$.

Let $u \in E$, we show that $\Phi (u)$ is $G$-invariant. Write $f = \Phi (u)$ for simplicity. 
One has for $h \in G$ and for almost all $x \in X$ and $g \in G$:
$$(h \cdot f)(x)(g)= f(h^{-1}x)(gh)= u(gx) = f(x)(g).$$
Thus $f$ is $G$-invariant.

Let $f \in L_{\mathrm{loc}} ^p \bigl(X, L^p(G)\bigr)^G$, we are looking for $u \in E$ such that $\Phi (u) =f$.
One would like to define $u$ by  $u(x) = f(x)(1)$; but this has no meaning in general since $f(x) \in L^p(G)$.
We will give two constructions for the function $u$. 

\noindent {\bf First construction :} The first construction uses Lebesgue differentiation theorem. We do not know whether
this theorem is valid for all locally compact second countable topological groups. However among them, it applies 
to Lie groups \cite[Th.\! 1.8]{Hei}, and to totally discontinuous groups \cite[Th.\! 2.8.19 and 2.9.8]{Fed}.
It states that for every $\varphi \in L_{\mathrm{loc}}^1 (G)$, and almost all  $g \in G$, one has 
$$\lim _{r \to 0} \frac{1}{\mathcal H \bigl(B(g, r)\bigr)} \int_{B(g,r)} \varphi(h) d\mathcal H(h) = \varphi (g).$$
 
Let again $f \in L_{\mathrm{loc}} ^p \bigl(X, L^p(G)\bigr)^G$ be as above. Define a Borel function $u: X \to {\bf R}$ by  
$$u(x) = \limsup _{n \to \infty} \frac{1}{\mathcal H \bigl(B(1, \frac{1}{n})\bigr) } \int _{B(1, \frac{1}{n})} f(x)(h^{-1}) d\mathcal H(h).$$
Let $g \in G$. Since $f$ is $G$-invariant and $\mathcal H$ is left-invariant, one has for almost every $x \in X$ 
\begin{align*}
\int _{B(1, \frac{1}{n})} f(gx)(h^{-1}) d\mathcal H(h) &= \int _{B(1, \frac{1}{n})} f(x)(h^{-1}g) d\mathcal H(h) \\
&= \int _{B(g^{-1}, \frac{1}{n})} f(x)(h^{-1}) d\mathcal H(h).
\end{align*}
Therefore Lebesgue differentation theorem implies that $u(gx) = f(x)(g)$ for almost all $g \in G$ and $x \in X$.

\noindent {\bf Second construction :} Our second construction holds for every locally compact second countable topological group. 
It will use the following lemma which is a particular case of  \cite[Prop. \!B.5 p. \!198 Appendices]{Z}.

\begin{lemma} Let $Z$ be a locally compact second countable topological space on which $G$ acts continuously by preserving a Borel measure $\eta$. 
Suppose $f : Z \to \bf R$ is a Borel function
such that for all $g \in G$ one has $f(gz) = f(z)$ for almost all $z \in Z$.
Then there exists a $G$-invariant conull Borel subset $Z_0 \subset Z$ and a Borel $G$-invariant map $\tilde f : 
Z_0 \to \bf R$ such that $\tilde f = f$ almost everywhere.
\end{lemma} 

Let again $f \in L_{\mathrm{loc}} ^p \bigl(X, L^p(G)\bigr)^G$ be as above. By the exponential law \cite[Lem.\! 1.4]{Bl}, one has 
$$L_{\mathrm{loc}} ^p \bigl(X, L_{\mathrm{loc}} ^p(G)\bigr) \simeq L_{\mathrm{loc}} ^p (X \times G).$$
By abuse of notation we will still denote by $f : X \times G \to \bf R$ a Borel function which represents our
$f \in L_{\mathrm{loc}} ^p \bigl(X, L^p(G)\bigr)^G$. According to the above lemma, applied with $(Z, \eta)  = (X \times G, \mu \times \mathcal H)$ and 
with the action $g \cdot (x,h) = (gx, h g^{-1})$, there exists
a $G$-invariant conull Borel subset $Z_0$ and a Borel $G$-invariant map $\tilde f : 
Z_0 \to \bf R$ such that $\tilde f = f$ almost everywhere. 
Since $Z_0$ is conull and $G$-invariant, Fubini theorem implies that for every $g \in G$, the subset $\{x \in X ~\vert ~ (x,g) \in Z_0\}$ is conull and Borel. Let $X_0 = \{x \in X ~\vert ~ (x,1) \in Z_0\}$, and let $u : X \to \mathbb R$ be defined by $u(x) = \tilde f(x,1)$ when $x \in X_0$, and by $0$ 
otherwise. By construction $u$ is Borel. 

We claim that $u(gx) = f(x,g)$ for almost all $(x,g) \in X \times G$. Indeed, since $X_0$ is conull, the subset 
$\{(x,g) \in X \times G ~\vert ~ gx \in X_0 \}$
is conull too (see the argument in the first part of the proof).
Therefore for almost all $(x,g) \in X \times G$, one has $(gx, 1) \in Z_0$. Since $\tilde f$ is $G$-invariant on $Z_0$ 
and since $\tilde f = f$ a.e, one obtains for almost all $(x,g) \in X \times G$:\\
$u(gx) = \tilde f(gx, 1) = \tilde f(x,g) = f(x,g)$.
\end{proof}

According to Proposition \ref{resolution_prop} and 
Lemma \ref{Blanc_lemma}, to finish the proof of Theorem \ref{cont_asymp_thm},  it is enough to establish:

\begin{lemma}
The complexes 
$L_{\mathrm{loc}}^p\bigl(G^{*+1}, L^p(G)\bigr)^G$ and  
$AS^{*,p}(G)$ 
are topologically isomorphic.
\end{lemma}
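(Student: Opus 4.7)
The plan is to invoke Lemma \ref{cont_asymp_lemma} with $X = G^{k+1}$ equipped with the diagonal left $G$-action and the product Haar measure. This identifies $L_{\mathrm{loc}}^p(G^{k+1}, L^p(G))^G$ topologically with the space $E_k$ of Borel functions $u : G^{k+1} \to \mathbb R$ satisfying, for every compact $K \subset G^{k+1}$,
\[
\|u\|_K^p := \int_G \int_K |u(gx_0, \ldots, gx_k)|^p \, dx_0 \cdots dx_k \, d\mathcal H(g) < \infty,
\]
topologised by the family $(\|\cdot\|_K)_K$. A direct computation shows that under the identification $\Phi$ the differential of $L_{\mathrm{loc}}^p(G^{*+1}, L^p(G))$ corresponds on $E_k$ to the same alternating-sum formula (\ref{differential}) that defines the differential of $AS^{*,p}(G)$. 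Hence the lemma will follow as soon as we check that $E_k$ and $AS^{k,p}(G)$ coincide as topological vector spaces of Borel functions on $G^{k+1}$.

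The main geometric tool will be the \emph{shape map}
\[
\Psi : G \times G^k \longrightarrow G^{k+1}, \qquad (h, y_1, \ldots, y_k) \longmapsto (h, hy_1, \ldots, hy_k),
\]
a homeomorphism that intertwines the diagonal action on $G^{k+1}$ with left translation on the first factor of $G \times G^k$. By left-invariance of the metric, $\Psi$ sends $G \times \tilde\Delta_s$ onto $\Delta_s^{(k)}$, where
\[
\tilde\Delta_s := \{(y_1, \ldots, y_k) \in G^k : d(e, y_i) \leq s \text{ and } d(y_i, y_j) \leq s \text{ for all } i, j\}.
\]
Since $d$ is proper, each $\tilde\Delta_s$ is relatively compact in $G^k$, and the family $(\tilde\Delta_s)_{s > 0}$ exhausts the compact subsets of $G^k$. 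Left-invariance of Haar measure makes $\Psi$ measure-preserving on every fibre $\{h\} \times G^k$, so that $dx_0 \cdots dx_k = dh \, dy_1 \cdots dy_k$.

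Setting $\tilde u(h, y) := u(h, hy_1, \ldots, hy_k)$ and applying Fubini together with the substitution $h = gx_0$ in the inner Haar integral (whose Jacobian is the modular function $\Delta(x_0)^{-1}$), one would obtain
\[
\|u\|_K^p = \int_{\Psi^{-1}(K)} \Delta(x_0)^{-1} \Bigl( \int_G |\tilde u(h, y)|^p \, dh \Bigr) dx_0 \, dy_1 \cdots dy_k,
\]
while on the asymptotic side $N_s(u)^p = \int_G \int_{\tilde\Delta_s} |\tilde u(h, y)|^p \, dy \, dh$. Since every $\Psi^{-1}(K)$ fits inside a product $K_0 \times \tilde\Delta_s$ and each $\tilde\Delta_s$ conversely fits inside a compact subset of $G^k$, the two families of seminorms bound one another up to the positive finite constants $\int_{K_0} \Delta(x_0)^{-1} dx_0$. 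This yields the required equality $E_k = AS^{k,p}(G)$ as topological vector spaces, and completes the proof in view of the preceding paragraph.

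The main obstacle is the bookkeeping with the modular function $\Delta$ in the non-unimodular case; however, since $\Delta$ is continuous and strictly positive, it contributes only finite multiplicative factors over compact sets and therefore affects neither the finiteness of the integrals nor the topologies generated by the two families of seminorms. Overall, the strategy mirrors the proof of Proposition \ref{simplicial_prop}, with Lemma \ref{cont_asymp_lemma} providing the measurable resolution in place of the simplicial one, and the shape map $\Psi$ in place of the orbit parametrisation.
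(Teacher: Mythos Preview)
Your argument is correct and follows essentially the same route as the paper's: both invoke Lemma~\ref{cont_asymp_lemma} with $X=G^{k+1}$ and then compare the two seminorm families by the change of variables $(x_0,x_1,\dots,x_k)\leftrightarrow(x_0,x_0^{-1}x_1,\dots,x_0^{-1}x_k)$ combined with the substitution $h=gx_0$ in the outer Haar integral, the modular function entering only as a positive continuous factor over compacta. The paper packages this via the distinguished compacts $K_s^U=\{(x_0,\dots,x_k)\in\Delta_s:x_0\in U\}$, which are precisely your $\Psi(U\times\tilde\Delta_s)$; aside from this cosmetic difference (and a harmless sign on the exponent of $\Delta$), the two proofs coincide.
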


\begin{proof}
Denote by $\mathcal H ^n$ the product measure on $G^n$.
According to Lemma \ref{cont_asymp_lemma},  
it is enough to show that the following two sets of semi-norms (on the space of measurable functions 
$u: G^{k+1} \to {\bf R}$) define the same topology. The first set of semi-norms is the one considered
in Lemma \ref{cont_asymp_lemma} in the case $X = (G ^{k+1}, \mathcal H ^{k+1})$. These are the $N_K$'s, with $K \subset G^{k+1}$ 
compact, defined by 
$$N_K (u) ^p = \int _G \int _K \bigl\vert u (gx_0, ..., gx_k) \bigr\vert ^p  d\mathcal H ^{k+2}(g, x_0, ..., x_k). $$
The second one is the set of semi-norms that appears in the definition of the asymptotic $L^p$-cohomology
of $(G, d, \mathcal H)$. These are the $N_s$'s , with $s>0$, defined by  
$$N_s (u)^p = \int _{\Delta _s } \bigl\vert u (y_0, ..., y_k)\bigr \vert ^p 
d\mathcal H ^{k+1} (y_0, ... , y_k).$$
For a compact subset $U \subset G$, set 
$$ K _s ^U:= \{(x_0, x_1, ... , x_k) \in \Delta _s ~\vert~ x_0 \in U\}.$$
Since the metric on $G$ is proper, $ K _s ^U$ is compact. Moreover every compact subset of $G^{k+1}$ is contained in such a subset.
Let $\Delta$ be the modular function on $G$. Since the metric and the mesure are left-invariant, one has 
\begin{align*}
&N _{K_s ^U} (u) ^p =\\
&=\int _{g \in G} \int _{x \in U} \int _{(1, y_1, ... , y_k) \in \Delta _s }
\bigl\vert u(gx, gxy_1, ... , gxy_k) \bigr\vert^p d\mathcal H ^{k+2}(g, x, y_1, ... ,y_k)  \\
&=\int _{g \in G} \int _{x \in U} \int _{(1, y_1, ... , y_k) \in \Delta _s } \Delta (x) 
\bigl\vert u(g, gy_1, ... , gy_k) \bigr\vert^p d\mathcal H ^{k+2}(g, x, y_1, ... ,y_k) \\
&= C(U) \cdot N_s (u) ^p,
\end{align*}
where $C(U):= \int _U \Delta (x)  d\mathcal H (x)$.
Thus the two sets of semi-norms define the same topology.
\end{proof}

\section{Semi-direct products}\label{5}

We partially relate the $L^p$-cohomology of semi-direct products $P = Q \ltimes R$  with the $L^p$-cohomology 
of the normal subgroups $R$ (see Corollaries \ref{spectral_cor1} and  \ref{spectral_cor2}). 
The next section will contain examples of application. 

\subsection*{Some generalities}
Let $V$ be a Fr\'echet vector space, and let $X$ be a locally compact second countable topological space
endowed with a Radon measure $\mu$. 
Let $p \in (1, +\infty)$.
We denote by $L^p(X, V)$ the set of (classes of) measurable functions $f: X \to V$
such that for every semi-norm $N$ on $V$ defining the topology of $V$, one has 
$$\Vert f \Vert _N ^p:= \int _X N\bigl(f(x)\bigr) ^p d\mu (x) < \infty.$$
Equipped with the set of semi-norms $\Vert \cdot \Vert _N$, the vector space $L^p(X,V)$ is a
Fr\'echet space.

\begin{proposition} \label{generalities_prop2}Suppose $X, Y$ are topological spaces as above endowed with Radon measures.
Then $L^p\bigl(X, L_{\mathrm{loc}}^p (Y, V)\bigr)$ is topologically isomorphic to $L_{\mathrm{loc}}^p \bigl(Y, L^p (X,V)\bigr)$.
\end{proposition}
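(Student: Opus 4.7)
The plan is to construct the isomorphism via a Fubini-Tonelli argument, applied to the canonical identification between functions of two variables $(x,y)$ and functions of $x$ valued in functions of $y$ (or vice versa).

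First I would define the candidate map $\Phi: L^p\bigl(X, L_{\mathrm{loc}}^p(Y,V)\bigr) \to L_{\mathrm{loc}}^p\bigl(Y, L^p(X,V)\bigr)$. Given $f$ in the source space, I regard it as (the class of) a jointly measurable function $F: X \times Y \to V$ with $F(x,\cdot) = f(x)$ for $\mu$-a.e.\! $x$; this is standard once we work with Borel-measurable representatives (and, in the applications to this paper, $V$ is separable, which removes any ambiguity in the Pettis/Bochner sense). Then I set $\Phi(f)(y)(x) := F(x,y)$, obtained by swapping the roles of the two variables.

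Next I would verify that $\Phi$ identifies the two families of semi-norms. Fix a compact subset $K \subset Y$ and a continuous semi-norm $N$ on $V$ defining the topology. By unwinding the definitions,
\begin{align*}
\|f\|^p &= \int_X \Bigl(\int_K N\bigl(F(x,y)\bigr)^p \, d\nu(y)\Bigr) d\mu(x), \\
\|\Phi(f)\|^p &= \int_K \Bigl(\int_X N\bigl(F(x,y)\bigr)^p \, d\mu(x)\Bigr) d\nu(y).
\end{align*}
Since the integrand $N(F(x,y))^p$ is non-negative and Borel measurable on $X \times Y$, Fubini-Tonelli gives equality of the two iterated integrals. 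Hence $\Phi$ is an isometry for each pair $(K,N)$, and in particular an embedding of topological vector spaces.

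Finally, surjectivity is proved symmetrically: given $g \in L_{\mathrm{loc}}^p\bigl(Y, L^p(X,V)\bigr)$, the same identification produces a jointly measurable $G: X \times Y \to V$ with $G(\cdot, y) = g(y)$ for $\nu$-a.e.\! $y$, and setting $f(x)(y) := G(x,y)$ yields an element of $L^p\bigl(X, L_{\mathrm{loc}}^p(Y,V)\bigr)$ with $\Phi(f) = g$ (the finiteness of $\|f\|_{K,N}$ follows from the same Fubini computation run in reverse). The main obstacle, which is mild, is the manipulation of measurable representatives of the equivalence classes so that the swap $(x,y) \leftrightarrow (y,x)$ is well-defined on classes; this is precisely the type of measurability bookkeeping already carried out in the proof of Lemma \ref{cont_asymp_lemma}, and it is unproblematic here because the integrands are non-negative so Tonelli applies without any integrability hypothesis.
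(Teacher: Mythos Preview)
Your proposal is correct and follows essentially the same approach as the paper: both identify the two spaces with (classes of) jointly measurable functions $F:X\times Y\to V$ and then match the defining semi-norms via Fubini--Tonelli. Your write-up is in fact more detailed than the paper's (which simply records the equality $\|f\|_{N_{K,N}}^p=\int_X\int_K N(f(x,y))^p\,d\mu_X\,d\mu_Y=\|f\|_{K,\|\cdot\|_N}^p$ and concludes), making explicit the construction of $\Phi$, its surjectivity, and the measurability bookkeeping.
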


\begin{proof} Let $f: X \times Y \to V$ be a measurable function. 
For a compact subset $K \subset Y$ and a continuous semi-norm $N$ on $V$, denote by $N_{K, N}$ and $\Vert \cdot \Vert _N$
the associated semi-norms on $L_{\mathrm{loc}}^p (Y, V)$ and $L^p (X,V)$ respectively. One has by Fubini-Tonelli
$$\Vert f \Vert _{N_{K, N}} ^p 
= \int _X \int _K N\bigl(f(x,y)\bigr) ^p d\mu_X (x) d\mu _Y (y)= \Vert f \Vert _{K, \Vert \cdot \Vert _N} ^p.$$
Moreover, by the exponential law \cite[Lem.\! 1.4]{Bl}, one has 
$$L_{\mathrm{loc}} ^p \bigl(X, L_{\mathrm{loc}} ^p(Y, V)\bigr) \simeq L_{\mathrm{loc}} ^p (X \times Y, V) \simeq 
L_{\mathrm{loc}} ^p \bigl(Y, L_{\mathrm{loc}} ^p(X, V)\bigr) .$$
Therefore the proposition follows.
\end{proof}

\subsection*{Hochschild-Serre spectral sequence}

Let $R$ be a closed normal subgroup of a locally compact second countable group $P$ and
let $(\pi, V)$ be a topological $P$-module.
The $P$-action on $C^*(R,V)$, defined by
$$(g \cdot f) (x_0, ..., x_k) = \pi (g)\bigl( f(g^{-1} x_0 g, ..., g^{-1} x_k g)\bigr)$$
for every $g \in P$, $f \in C^k (R,V)$ and $x_0, ..., x_k \in R$, induces a $P/R$-action on the  
topological vector spaces ${\rm H}_{\mathrm{ct}} ^*(R, V)$ \cite[p.\! 50]{Gui}. 

The Hochschild-Serre spectral sequence relates the continuous cohomology
${\rm H}_{\mathrm{ct}} ^*(P, V)$ to the cohomologies ${\rm H}_{\mathrm{ct}} ^*\bigl(P/R, {\rm H}_{\mathrm{ct}} ^*(R,V)\bigr)$ -- see \emph{e.g.}\! 
\cite[Chapter IX, Th.\! 4.1 p.\! 178]{BW} or \cite[Prop.\! 5.1 p.\! 214]{Gui}.

In the special case when $(\pi, V)$ is the right regular representation on $L^p(P)$ and $P$
is a semi-direct product $P = Q \ltimes R$ (where $Q$ and $R$ are closed subgroups)
we give in this subsection a more comprehensive description of these cohomological spaces.
In the following statement 
the measures $\mathcal H _Q$ and $\mathcal H _R$ are left-invariant Haar measures on $Q$ and $R$ respectively. 

\begin{proposition}\label{spectral_prop}  Let $k \in {\bf N}$.
Assume that ${\rm H}_{\mathrm{ct}} ^k \bigl(R, L^p(R)\bigr)$ is Hausdorff and that the complex $C^*\bigl(R, L^p(R)\bigr)^R$
is homotopically equivalent to a complex of Banach spaces. Then 
${\rm H}_{\mathrm{ct}} ^k \bigl(R, L^p (P)\bigr)$ is Hausdorff and there is a canonical isomorphism of topological $Q$-modules:
$${\rm H}_{\mathrm{ct}} ^k \bigl(R, L^p (P)\bigr) \simeq L^p \Bigl(Q, {\rm H}_{\mathrm{ct}}^k \bigl(R, L^p (R)\bigr)\Bigr),$$ 
where, in the latter space, the groups $Q$ and $R$ are equipped with the 
measures $\mathcal H _Q$ and $\mathcal H _R$, and where the $Q$-action is induced by
$$(q \cdot f)(y)( x_0, ..., x_k, x) = f(yq)(q^{-1} x_0 q, ... , q^{-1} x_k q, q^{-1} x q),$$
for every $q, y \in Q$,  $f: Q \to C^k \bigl(R, L^p(R)\bigr)$ and $x_0, ... ,x_k, x \in R$.
\end{proposition}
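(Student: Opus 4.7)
The plan is to compute ${\rm H}^k_{\mathrm{ct}}(R, L^p(P))$ by applying Blanc's strong injective resolution (Lemma \ref{Blanc_lemma}) and then exchanging the order of a pair of $L^p$-factors. The starting point is the Haar decomposition: a direct left-invariance check on the map $(q,r) \mapsto qr$ shows that $\mathcal H_Q \otimes \mathcal H_R$ is a left Haar measure on $P$, hence $L^p(P)$ is isometrically isomorphic to $L^p\bigl(Q, L^p(R)\bigr)$. Under this identification, the right-regular $R$-action on $L^p(P)$ is trivial on the $Q$-factor (since $(qr)s = q(rs)$ for $s \in R$) and acts on the inner $L^p(R)$ by the right regular representation.

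I would then use Lemma \ref{Blanc_lemma} to model ${\rm H}^k_{\mathrm{ct}}(R, L^p(P))$ as the cohomology of $L^p_{\mathrm{loc}}\bigl(R^{*+1}, L^p(P)\bigr)^R$, and Proposition \ref{generalities_prop2} to swap the two $L^p$-factors:
$$L^p_{\mathrm{loc}}\Bigl(R^{k+1}, L^p\bigl(Q, L^p(R)\bigr)\Bigr) \simeq L^p\Bigl(Q, L^p_{\mathrm{loc}}\bigl(R^{k+1}, L^p(R)\bigr)\Bigr).$$
Since $R$ acts trivially on the outer $Q$-coordinate, the $R$-invariants can be pulled inside the $L^p(Q, \cdot)$, yielding the model complex $L^p\bigl(Q, L^p_{\mathrm{loc}}(R^{*+1}, L^p(R))^R\bigr)$ for ${\rm H}^k_{\mathrm{ct}}(R, L^p(P))$.

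The next step is to exploit the hypothesis. The inner complex $L^p_{\mathrm{loc}}(R^{*+1}, L^p(R))^R$ comes from a strong injective $R$-resolution of $L^p(R)$, hence by Proposition \ref{resolution_prop} it is homotopy equivalent to $C^*(R, L^p(R))^R$, which by assumption is homotopy equivalent to some Banach complex $B^*$. The functor $L^p(Q, \cdot)$ sends continuous linear maps of Fr\'echet spaces to continuous linear maps by pointwise action, and hence preserves chain homotopies; so $L^p\bigl(Q, L^p_{\mathrm{loc}}(R^{*+1}, L^p(R))^R\bigr)$ is homotopy equivalent to $L^p(Q, B^*)$. Since $H^k(B^*) \simeq {\rm H}^k_{\mathrm{ct}}(R, L^p(R))$ is Hausdorff by the first hypothesis, $\im d_{k-1}^B$ is closed in $B^k$, and a standard argument (measurable lifting through the open mapping theorem) gives the topological identification $H^k\bigl(L^p(Q, B^*)\bigr) \simeq L^p\bigl(Q, H^k(B^*)\bigr)$. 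Chaining these isomorphisms produces the required description of ${\rm H}^k_{\mathrm{ct}}(R, L^p(P))$ and its Hausdorffness. The hard part is exactly this step: exchanging the non-exact functor $H^k(\cdot)$ with $L^p(Q, \cdot)$ requires both a Banach model and closed coboundaries, which is why both hypotheses enter.

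For the $Q$-action, the right-regular action of $q_0 \in Q \subset P$ on $L^p(P)$ reads in $(q,r)$-coordinates as $(q_0 \cdot u)(q, r) = u(qq_0, q_0^{-1} r q_0)$, thanks to the semi-direct product identity $(qr)q_0 = (qq_0)(q_0^{-1}rq_0)$. Propagating this action through Proposition \ref{generalities_prop2}, the passage to $R$-invariants, and the final Banach quotient produces precisely the formula $(q \cdot f)(y)(x_0, \dots, x_k) = f(yq)(q^{-1}x_0 q, \dots, q^{-1}x_k q)$. A change of variables shows that this action distorts the naive $L^p$-norm by a $\Delta$-factor arising from the conjugation on the $R$-coordinates, and weighting $\mathcal H_Q$ by $\Delta^{-1}$ exactly absorbs this distortion, producing the asserted topological $Q$-module structure.
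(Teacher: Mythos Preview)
Your skeleton is exactly the paper's: Blanc's resolution, the swap via Proposition~\ref{generalities_prop2}, transport of the homotopy equivalence through $L^p(Q,\cdot)$, and then the ``$H^k$ commutes with $L^p(Q,\cdot)$'' step (the paper isolates this as Lemma~\ref{Michael} and invokes Michael's continuous selection theorem rather than a bare measurable lifting, but the content is the same).

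The last paragraph, however, does not work as written. With your parametrization $(q,r)\mapsto qr$ you correctly get that $\mathcal H_Q\otimes\mathcal H_R$ is left Haar and that the right $R$-action is trivial on the $Q$-factor. But the right $Q$-action is \emph{not} trivial on the $R$-factor: you yourself write $(q_0\cdot u)(q,r)=u(qq_0,\,q_0^{-1}rq_0)$. Propagating this honestly gives
\[
(q_0\cdot F)(y)(x_0,\dots,x_k)(r)=F(yq_0)\bigl(q_0^{-1}x_0q_0,\dots,q_0^{-1}x_kq_0\bigr)\bigl(q_0^{-1}rq_0\bigr),
\]
with an extra conjugation on the $L^p(R)$-variable that is absent from the formula in the proposition. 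So your claim that propagation ``produces precisely'' that formula is false, and the natural measure you land on is $\mathcal H_Q$, not $\Delta^{-1}\mathcal H_Q$; re-weighting by $\Delta^{-1}$ does not remove the extra conjugation.

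The paper avoids this by choosing the other parametrization $(y,x)\mapsto xy$ (with $y\in Q$, $x\in R$). Then $(xy)q_0=x(yq_0)$, so the right $Q$-action is trivial on the $R$-coordinate and the stated formula drops out immediately; the price is that the left Haar measure in these coordinates is $\Delta^{-1}(y)\,d\mathcal H_Q(y)\,d\mathcal H_R(x)$, which is where the weight really comes from. Your description and the paper's are of course isomorphic $Q$-modules: the intertwiner $\hat F(y)(x_0,\dots,x_k)(x)=F(y)(x_0,\dots,x_k)(y^{-1}xy)$ sends your action to the paper's and is an isometry from $L^p(Q,\mathcal H_Q;\,\cdot\,)$ onto $L^p(Q,\Delta^{-1}\mathcal H_Q;\,\cdot\,)$. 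But you have to either exhibit this intertwiner or switch to the $xy$-parametrization; the sentence about $\Delta^{-1}$ ``absorbing the distortion'' is not a substitute for either.
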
 

\begin{proof}
As a consequence of Blanc's lemma (see Lemma \ref{Blanc_lemma}) and of Proposition \ref{resolution_prop} (ii), the inclusion of the subcomplex $C^*\bigl(R, L^p(P)\bigr)^R$ into the 
complex $L _{\mathrm{loc}} ^p \bigl(R^{*+1} , L^p (P)\bigr)^R$ induces a canonical isomorphism in cohomology.
Thus, it is not only that these two complexes have the same cohomology, but the $Q$-action on the cohomology
can be expressed in the same manner.

We claim that 
$L^p(P) \simeq L^p \bigl(Q, L^p(R)\bigr)$
as topological
$P$-modules, where $Q$ and $R$ are equipped with the measures $\mathcal H _Q$
and $\mathcal H _R$, and where
the expression of the right-regular representation of $P$ on the latter module is
$\bigl(\pi(q,r)f\bigr)(y,x) = f(yq, q^{-1}xqr)$ for every $q,y \in Q$ and $r,x \in R$.
 
Indeed this follows from Fubini, by observing that $\mathcal H _Q \times \mathcal H _R$
is a left-invariant measure on $P = Q \ltimes R$. 

Therefore, with Proposition \ref{generalities_prop2}, we obtain that $L _{\mathrm{loc}} ^p \bigl(R^{*+1} , L^p (P)\bigr)^R$
is isomorphic to $L^p \Bigl(Q, L _{\mathrm{loc}} ^p \bigl(R^{*+1} , L^p (R)\bigr)^R\Bigr)$. In this representation, one can check 
that the $Q$-action on the latter complex can be written as in the statement of the proposition.

It remains to prove that the $k$th-cohomology space of the complex $L^p \Bigl(Q, L _{\mathrm{loc}} ^p \bigl(R^{*+1} , L^p (R)\bigr)^R \Bigr)$
is isomorphic to $L^p \Bigl(Q, {\rm H}_{\mathrm{ct}} ^k \bigl(R,L^p(R)\bigr)\Bigr)$. By assumption and from Lemma \ref{Blanc_lemma}, the complex
$L _{\mathrm{loc}} ^p \bigl(R^{*+1}, L^p(R)\bigr)^R $ 
is homotopy equivalent to a complex of Banach spaces
that we denote by $B^*$. Since every continuous linear map $\varphi: V_1 \to V_2$ between
Fr\'echet spaces, extends to a continuous linear map
$\varphi _*: L^p (Q, V_1) \to L^p(Q, V_2)$ defined by $\varphi _* (f) = \varphi \circ f$,
the above homotopy equivalence induces a homotopy equivalence between
the complexes $L^p \Bigl(Q, L _{\mathrm{loc}} ^p \bigl(R^{*+1} , L^p (R)\bigr)^R\Bigr)$ 
and $L^p (Q, B^*)$.
Since ${\rm H}^k (B^*) \simeq {\rm H}_{\mathrm{ct}} ^k \bigl(R,L^p(R)\bigr)$, the following lemma ends the proof of the proposition.
\end{proof}

\begin{lemma} \label{Michael}
Suppose $k \in {\bf N}$ and that $B^*$ is a complex of Banach spaces such that
$H ^k (B^*)$ is Hausdorff. Then the $k$th-cohomology space of the complex $L^p(Q,B^*)$ satisfies ${\rm H}^k \bigl(L^p(Q, B^*)\bigr) \simeq 
L^p \bigl(Q, {\rm H}^k (B^*)\bigr)$ canonically
and topologically.
\end{lemma}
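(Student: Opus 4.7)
The plan is to reduce the claim to standard facts about $L^p$-spaces of Banach-valued functions, combined with the Bartle--Graves continuous selection theorem (which is a corollary of Michael's selection theorem, presumably the source of the lemma's informal name). Write $Z^k := \Ker d_k$ and $I^k := \im d_{k-1}$ inside $B^k$. Continuity of $d_k$ makes $Z^k$ closed, and the hypothesis that $\mathrm{H}^k(B^*)$ is Hausdorff forces $I^k$ to be closed in $B^k$. Thus $B^{k-1}$, $Z^k$, $I^k$ and $\mathrm{H}^k(B^*) = Z^k/I^k$ are all Banach spaces, as is $L^p(Q, V)$ for any Banach space $V$. The pointwise differential $d_{k,*}: L^p(Q, B^k) \to L^p(Q, B^{k+1})$, $(d_{k,*} f)(q) := d_k\bigl(f(q)\bigr)$, is bounded linear, and it is immediate from Fubini-type considerations that
$$\Ker(d_{k,*}) = L^p(Q, Z^k), \qquad \im(d_{k-1,*}) \subseteq L^p(Q, I^k).$$

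The crux is the reverse inclusion in the second identity. Given $f \in L^p(Q, I^k)$, I will show $f$ lies in $\im(d_{k-1,*})$. The surjective bounded linear map $d_{k-1}: B^{k-1} \to I^k$ of Banach spaces admits, by the Bartle--Graves theorem, a continuous (nonlinear) section $\sigma: I^k \to B^{k-1}$ satisfying $\|\sigma(v)\| \leqslant C \|v\|$ for some constant $C$. Then $g := \sigma \circ f$ is measurable with $\|g\|_{L^p} \leqslant C \|f\|_{L^p}$, and $d_{k-1,*} g = f$. Hence $\im(d_{k-1,*}) = L^p(Q, I^k)$, so
$$\mathrm{H}^k\bigl(L^p(Q, B^*)\bigr) = L^p(Q, Z^k) \big/ L^p(Q, I^k).$$

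Finally, the pointwise quotient map $\pi: Z^k \to Z^k/I^k = \mathrm{H}^k(B^*)$ induces a continuous linear map $\pi_*: L^p(Q, Z^k) \to L^p\bigl(Q, \mathrm{H}^k(B^*)\bigr)$, $f \mapsto \pi \circ f$, whose kernel is clearly $L^p(Q, I^k)$; a second application of Bartle--Graves (now to $\pi$) provides a continuous bounded section and shows $\pi_*$ is surjective. The factored map
$$L^p(Q, Z^k) \big/ L^p(Q, I^k) \;\longrightarrow\; L^p\bigl(Q, \mathrm{H}^k(B^*)\bigr)$$
is therefore a continuous linear bijection between Banach spaces, hence a topological isomorphism by the open mapping theorem. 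As all steps proceed by postcomposition with morphisms determined by the complex $B^*$, the identification is natural in $B^*$, hence canonical.

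The main obstacle will be the two appeals to continuous/measurable selection: both presuppose that the relevant images are closed, so that one works inside genuine Banach spaces where Bartle--Graves applies. This is exactly what the Hausdorff hypothesis on $\mathrm{H}^k(B^*)$ delivers; without it, $\im(d_{k-1,*})$ need not be closed in $\Ker(d_{k,*})$ and the identification breaks.
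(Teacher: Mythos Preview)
Your proof is correct and follows essentially the same route as the paper: identify the kernel and image of the induced differential on $L^p(Q,B^*)$ via a continuous bounded section (what you call Bartle--Graves, what the paper cites as Michael's theorem \cite{Mi}), then apply a second such section to the quotient map $Z^k \to \mathrm{H}^k(B^*)$ and conclude with the open mapping theorem. The only cosmetic difference is that the paper phrases the section bound as $\|\sigma(c)\| \leqslant \lambda \inf\{\|b\| : d(b)=c\}$ rather than $\|\sigma(v)\| \leqslant C\|v\|$, but these are equivalent via the open mapping theorem.
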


\begin{proof} [Proof of Lemma \ref{Michael}] Consider the complex $L^p(Q,B^*)$.
One has clearly $\Ker d \vert_{ L^p (Q, B^k)} = 
L^p (Q, \Ker d \vert _{B^k})$. We claim that 
$$\im \bigl(d: L^p (Q, B^{k-1}) \to L^p(Q, B^k)\bigr) = L^p \bigl(Q, \im (d: B^{k-1} \to B^k)\bigr).$$
The direct inclusion is obvious. For the reverse one, we use the assumption
that $\im (d: B^{k-1} \to B^k)$ is a Banach space in combination with the following Michael's theorem
\cite[Prop.\! 7.2]{Mi}, that previously appears in \cite[p.\! 93]{Monod_LNM} in the context of bounded cohomology:

\emph{Suppose $\varphi: B \to C$ is a continuous surjective linear map between Banach spaces.
Then for every $\lambda >1$ there exists a continuous (non linear) section $\sigma: C \to B$
such that for every $c \in C$ one has
$$\Vert \sigma (c) \Vert \leqslant \lambda \inf \{\Vert b \Vert ~;~ \varphi (b) = c\}.$$}
The natural map $L^p (Q, \Ker d \vert _{B^k}) \to L^p \bigl(Q, {\rm H}^k (B^*)\bigr)$ and the above equalities 
induce an injective continuous map 
$${\rm H}^k \bigl(L^p( Q, B^*)\bigr) \to L^p \bigl(Q, {\rm H}^k (B^*)\bigr).$$
It is surjective, thanks
again to Michael's theorem, since the projection map $\Ker d \vert _{B^k} \to {\rm H}^k (B^*)$ is a surjective 
continuous linear map between Banach spaces.

Since ${\rm H}^k (B^*)$ is a Banach space, so is $L^p \bigl(Q, {\rm H}^k (B^*)\bigr)$. From the previous description of $\Ker d \vert_{ L^p (Q, B^k)}$ and 
$\im d \vert _{ L^p (Q, B^{k-1})}$, the cohomological space ${\rm H}^k \bigl(L^p( Q, B^*)\bigr)$ is also a Banach space (for the quotient norm). Therefore Banach's theorem implies that
the above isomorphism is a topological one.
\end{proof}

As a consequence of Proposition \ref{spectral_prop}, the Hochschild-Serre spectral sequence for $L^p$-cohomology takes the following form:

\begin{corollary} \label{spectral_cor1}
Suppose that $P = Q \ltimes R$ where $Q$ and $R$ are closed subgroups of $P$.
Assume that $C^*\bigl(R, L^p(R)\bigr)^R$ is homotopically equivalent to a complex of Banach spaces and that
every cohomology space ${\rm H}_{\mathrm{ct}} ^k \bigl(R, L^p(R)\bigr)$ is Hausdorff.
Then, there exists a spectral sequence $(E_r)$, abutting to ${\rm H}_{\mathrm{ct}} ^*\bigl(P, L^p(P)\bigr)$,
in which 
$$E_2 ^{k,\ell} = {\rm H}_{\mathrm{ct}} ^k\biggl(Q, L^p\Bigl(Q, {\rm H}_{\mathrm{ct}} ^\ell \bigl(R, L^p(R)\bigr)\Bigr)\biggr).$$
\end{corollary}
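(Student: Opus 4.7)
The plan is to derive the corollary as a direct combination of the classical Hochschild--Serre spectral sequence for continuous cohomology and the explicit description of $\mathrm{H}^\ell_{\mathrm{ct}}(R, L^p(P))$ obtained in Proposition \ref{spectral_prop}.

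First I would invoke the Hochschild--Serre spectral sequence for continuous cohomology of locally compact groups, as established in \cite[Chapter IX, Th.~4.1, p.~178]{BW} (see also \cite[Prop.~5.1, p.~214]{Gui}). Applied to the closed normal subgroup $R$ of $P$ with coefficients in the topological $P$-module $L^p(P)$, this yields a spectral sequence $(E_r)$ abutting to $\mathrm{H}^*_{\mathrm{ct}}\bigl(P, L^p(P)\bigr)$ whose second page is
$$E_2^{k,\ell} = \mathrm{H}^k_{\mathrm{ct}}\bigl(P/R, \mathrm{H}^\ell_{\mathrm{ct}}(R, L^p(P))\bigr).$$
Since $P = Q \ltimes R$, the quotient $P/R$ is canonically isomorphic to $Q$ as a locally compact second countable topological group, so we may rewrite $E_2^{k,\ell} = \mathrm{H}^k_{\mathrm{ct}}\bigl(Q, \mathrm{H}^\ell_{\mathrm{ct}}(R, L^p(P))\bigr)$.

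Next I would plug in the explicit description of the coefficient module provided by Proposition \ref{spectral_prop}: under the standing hypotheses (that $\mathrm{H}^\ell_{\mathrm{ct}}(R, L^p(R))$ is Hausdorff for every $\ell$ and that $C^*\bigl(R, L^p(R)\bigr)^R$ is homotopy equivalent to a complex of Banach spaces), there is a canonical isomorphism of topological $Q$-modules
$$\mathrm{H}^\ell_{\mathrm{ct}}\bigl(R, L^p(P)\bigr) \simeq L^p\Bigl(Q, \mathrm{H}^\ell_{\mathrm{ct}}\bigl(R, L^p(R)\bigr)\Bigr),$$
where the $Q$-action is the one spelled out in the statement of Proposition \ref{spectral_prop}. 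Substituting this identification into the $E_2$ page gives exactly the claimed formula
$$E_2^{k,\ell} = \mathrm{H}^k_{\mathrm{ct}}\biggl(Q, L^p\Bigl(Q, \mathrm{H}^\ell_{\mathrm{ct}}\bigl(R, L^p(R)\bigr)\Bigr)\biggr).$$

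The main subtlety I expect lies in the book-keeping needed to verify that the isomorphism of Proposition \ref{spectral_prop} is genuinely an isomorphism of topological $Q$-modules, so that one may pass it through the outer $\mathrm{H}^k_{\mathrm{ct}}(Q, -)$ functor in the spectral sequence. The hypotheses (Hausdorffness of the $R$-cohomology in each degree, and Banach-complex homotopy equivalence of $C^*(R,L^p(R))^R$) are tailored precisely so that Lemma \ref{Michael} and Proposition \ref{spectral_prop} apply uniformly in $\ell$; these are exactly the regularity assumptions that ensure the classical Hochschild--Serre machinery of \cite{BW} is applicable here, and that the second page identification above is natural in the $Q$-module structure. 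Beyond that, the corollary is then a purely formal consequence.
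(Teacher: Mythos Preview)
Your proposal is correct and follows exactly the approach the paper intends: the paper does not give a separate proof of Corollary \ref{spectral_cor1} but simply introduces it as the Hochschild--Serre spectral sequence (cited from \cite{BW} and \cite{Gui}) with the $E_2$-page coefficients rewritten via Proposition \ref{spectral_prop}. Your write-up makes this explicit, and your observation that the isomorphism of Proposition \ref{spectral_prop} must be one of topological $Q$-modules (which that proposition indeed asserts) is precisely the point needed to pass it through $\mathrm{H}^k_{\mathrm{ct}}(Q,-)$.
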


\begin{corollary}\label{spectral_cor2}
Suppose that $P = Q \ltimes R$ where $Q$ and $R$ are closed subgroups of $P$.
Assume that $C^*\bigl(R, L^p(R)\bigr)^R$ is homotopically equivalent to a complex of Banach spaces. Suppose also
that there exists $n \in {\bf N}$, such that ${\rm H}_{\mathrm{ct}} ^k \bigl(R, L^p(R)\bigr) = 0$ for $0 \leqslant k < n$ and such that 
${\rm H}_{\mathrm{ct}} ^n \bigl(R, L^p(R)\bigr)$ is Hausdorff.
Then ${\rm H}_{\mathrm{ct}} ^k \bigl(P, L^p(P)\bigr) = 0$ for $0 \leqslant k < n$ and there is a linear isomorphism
$${\rm H}_{\mathrm{ct}} ^n \bigl(P, L^p(P)\bigr) \simeq L^p\Bigl(Q, {\rm H}_{\mathrm{ct}} ^n \bigl(R, L^p(R)\bigr)\Bigr) ^Q .$$
\end{corollary}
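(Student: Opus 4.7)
The plan is to apply the Hochschild--Serre spectral sequence of Corollary~\ref{spectral_cor1} and exploit the vanishing hypothesis to make it collapse in total degrees at most $n$. Set $V_\ell := {\rm H}_{\mathrm{ct}}^\ell\bigl(R, L^p(R)\bigr)$, so $V_\ell = 0$ for $0 \leqslant \ell < n$ and $V_n$ is Hausdorff by hypothesis. The $E_2$-page is
\[
E_2^{k,\ell} = {\rm H}_{\mathrm{ct}}^k\Bigl(Q, L^p(Q, V_\ell)\Bigr),
\]
and vanishes in the entire horizontal strip $0 \leqslant \ell < n$; hence so does the $E_\infty$-page there.

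The abutment ${\rm H}_{\mathrm{ct}}^i\bigl(P, L^p(P)\bigr)$ carries a bounded decreasing filtration whose graded pieces are the $E_\infty^{k, i-k}$ for $0 \leqslant k \leqslant i$. For $i < n$ every index satisfies $i - k < n$, so each graded piece vanishes and an immediate induction on the filtration kills the abutment; this gives the first half of the statement. For $i = n$ the only graded piece that can survive is the corner $E_\infty^{0, n}$, so ${\rm H}_{\mathrm{ct}}^n\bigl(P, L^p(P)\bigr) \simeq E_\infty^{0, n}$ as a linear space.

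It remains to identify $E_\infty^{0, n}$ with $L^p(Q, V_n)^Q$. The outgoing differentials from this corner, $d_r \colon E_r^{0,n} \to E_r^{r, n-r+1}$ for $r \geqslant 2$, land in the vanishing strip $\ell < n$; the incoming ones $d_r \colon E_r^{-r, n+r-1} \to E_r^{0, n}$ have source outside the first quadrant and hence vanish. Therefore $E_\infty^{0, n} = E_2^{0, n}$, and by the definition of degree-zero continuous cohomology (Definition~\ref{ell_pcont_def})
\[
E_2^{0, n} = {\rm H}_{\mathrm{ct}}^0\Bigl(Q, L^p(Q, V_n)\Bigr) = L^p(Q, V_n)^Q,
\]
which yields the desired linear isomorphism.

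The main obstacle is not the bookkeeping but the verification that the spectral sequence of Corollary~\ref{spectral_cor1} is available under the weakened Hausdorff hypothesis: here only $V_n$, and not every $V_\ell$, is assumed Hausdorff. This is harmless, since the identification of the $E_2$-page provided by Proposition~\ref{spectral_prop} is needed only in the strip $\ell \leqslant n$, where the relevant coefficient modules are either zero or Hausdorff by hypothesis; higher values of $\ell$ never influence total degrees $\leqslant n$ of the abutment. A secondary subtlety is that we only claim a linear (not topological) isomorphism in degree $n$, which matches the statement and spares us from tracking topologies through the edge map.
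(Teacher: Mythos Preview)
Your proof is correct and is exactly the standard spectral-sequence argument the paper has in mind; the paper gives no explicit proof of this corollary, treating it as an immediate consequence of the Hochschild--Serre spectral sequence together with Proposition~\ref{spectral_prop}. Your handling of the Hausdorff hypothesis is also right: Proposition~\ref{spectral_prop} is stated degree by degree, so the identification of the $E_2$-terms is needed only for $\ell\leqslant n$, where the coefficient modules are either zero or Hausdorff by assumption, and the first-quadrant nature of the spectral sequence prevents higher rows from contaminating total degrees $\leqslant n$.
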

We notice that the latter isomorphism is just a linear one, in particular it does not imply that 
${\rm H}_{\mathrm{ct}} ^n \bigl(P, L^p(P)\bigr)$
is Hausdorff.

\section{The (possibly) non-vanishing strip}
\label{6}

In this section, we use the notation and conventions on Lie groups explained at the end of the introduction. 
Let $G = {\bf G}({\bf R})^\circ$ be a non-compact simple Lie group and let $A = {\bf S}({\bf R})^\circ$ be a maximal ${\bf R}$-split torus. 

Recall that the Lie algebra $\frak{g}$ of $G$ decomposes as a direct sum of $A$-stable subspaces in the adjoint representation of $G$ on $\frak{g}$ \cite[3.5]{Borel-LAG}. 
The choice of a Weyl chamber in $A$ (or in the corresponding maximal flat in the symmetric space \cite[6.4]{Maubon}) defines a system of positive roots, and the connected subgroup integrating the subspaces of $\frak{g}$ on which $A$ acts trivially or via a character which is a positive root, is a minimal parabolic subgroup in $G$.
All minimal parabolic subgroups are obtained thanks to such a choice of a Weyl chamber in a maximal ${\bf R}$-split torus $A$; moreover $G$ acts transitively on them by conjugation. 
A parabolic subgroup is a connected subgroup containing a minimal parabolic subgroup; it can also be defined via considerations from Lie algebras \cite[VII.7]{Knapp}. 
Combinatorially, the collection of parabolic subgroups of $G$ and its Weyl group come from a Tits system \cite[\textsection 21 and \textsection 24.C]{Borel-LAG}, and geometrically parabolic subgroups can be defined as stabilizers in $G$ of points in the visual boundary of the symmetric space of $G$ -- see \cite[6.6]{Maubon} and \cite{BS}. 
  
As a consequence of Corollary \ref{cont_asymp_cor}, the group $L^p$-cohomology of $G$ is isomorphic to
the group $L^p$-cohomology of any of its parabolic subgroups. Indeed, every parabolic subgroup acts cocompactly on $G$, and thus is quasi-isometric to $G$.

Recall that a parabolic subgroup decomposes as $P = M \ltimes AN$ with $M$ semi-simple, $A \simeq {\bf R} ^r$, and
$N$ nilpotent \cite[Prop. 7.83]{Knapp}. 
One can expect to use this decomposition to derive, from the Hochschild-Serre spectral sequence, some 
informations about the group $L^p$-cohomology of $P$.  To do so, and according to Corollaries \ref{spectral_cor1} and 
\ref{spectral_cor2}, 
one has to know a bit of the $L^p$-cohomology of the group $AN$. When the rank $r$ of $A$ is at least $2$, very little is known about 
the $L^p$-cohomology of $AN$ (apart from the vanishing of ${\rm H}^1 _{\mathrm{ct}} \bigl(AN, L^p(AN)\bigr)$ for all $p$ -- see \cite{P2, CT}).

In contrast, when the rank of $A$ is one, \emph{i.e.}\! when $P$ is a maximal proper parabolic subgroup, the group $AN$
admits an invariant negatively curved Riemannian metric, and the $L^p$-cohomology of negatively curved manifolds 
has been investigated by Pansu \cite{Pa99, P1}.

We focus on the class of simple Lie groups $G$ that admit a maximal proper parabolic subgroup $P = M \ltimes AN$ with $AN$
quasi-isometric to a real hyperbolic space $\mathbb H ^d$ of constant negative curvature. According to Definition \ref{intro-def} such groups $G$ are called \emph{admissible}.
They will be classified in the next section. The elementary case $G = {\rm SL}_n( {\bf R})$ is discussed in Example \ref{ex_SL}.
We obtain the following partial description of their cohomology:

\begin{theorem}
\label{theorem_vanishing}
Let $G$ be an admissible simple Lie group.
Let $P = M \ltimes AN$ be a maximal proper parabolic subgroup with radical $AN$ quasi-isometric to $\mathbb H ^d$. 
Let $X = G/K$ be the associated symmetric space and let $D = \dim X$.
One has:
\begin{enumerate}
\item ${\rm H}^0 _{\mathrm{ct}}\bigl( G, L^p(G) \bigr) = {\rm H}^k _{\mathrm{ct}}\bigl( G, L^p(G) \bigr) = 0$ for $k \geqslant D$.
\item Suppose $k \in \{1, ... ,D-1\}$. Then:
\begin{itemize}
\item ${\rm H}^k _{\mathrm{ct}}\bigl( G, L^p(G) \bigr) = 0$ for $k \leqslant \frac{d-1}{p}$ and for $k \geqslant \frac{d-1}{p} + D-d+2$,
\item $\overline{{\rm H}^k _{\mathrm{ct}}}\bigl( G, L^p(G) \bigr) =0$ for $\frac{d-1}{p} + D-d+1 \leqslant k < \frac{d-1}{p} + D-d+2$,
\item  Suppose in addition that $\frac{d-1}{p} \notin {\bf N}$, and let $\ell = \lfloor \frac{d-1}{p} \rfloor +1$. 
Then for $k > \frac{d-1}{p}$, there is a linear isomorphism:\\
${\rm H}_{\mathrm{ct}} ^k \bigl( G, L^p(G) \bigr) \simeq 
{\rm H}_{\mathrm{ct}} ^{k-\ell}  \biggl(M, L^p\Bigl(M, {\rm H}^\ell _{\mathrm{ct}} \bigl(AN, L^p(AN)\bigr)\Bigr)\biggr)$.
\end{itemize}
\end{enumerate}
\end{theorem}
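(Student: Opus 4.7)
The plan begins by reducing to a computation on the parabolic $P = M \ltimes AN$: since $P$ is cocompact in $G$ via the Iwasawa decomposition, the two groups are quasi-isometric, and Corollary \ref{cont_asymp_cor} identifies the (unreduced and reduced) continuous $L^p$-cohomology of $G$ with that of $P$. To feed the Hochschild--Serre machinery of Section \ref{5}, I will first use known data on the radical $R = AN$. Because $AN$ is itself a locally compact second countable group quasi-isometric to $\mathbb{H}^d$, a second application of Corollary \ref{cont_asymp_cor}, via a simplicial model of $\mathbb{H}^d$ as in Proposition \ref{simplicial_prop}, identifies $H^*_{ct}(AN, L^p(AN))$ with the simplicial $L^p$-cohomology of $\mathbb{H}^d$. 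Pansu's results then provide the needed input: $H^m_{ct}(AN, L^p(AN)) = 0$ for $m \leqslant (d-1)/p$; the space $H^\ell_{ct}(AN, L^p(AN))$ with $\ell = \lfloor (d-1)/p\rfloor + 1$ is Hausdorff (hence Banach); and under the non-integer hypothesis on $(d-1)/p$ the higher-degree cohomologies vanish (at least in the reduced sense). The same simplicial description also furnishes the Banach-complex hypothesis of Corollaries \ref{spectral_cor1}--\ref{spectral_cor2}.

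\textbf{Hochschild--Serre and the middle isomorphism.} With $Q = M$ and $R = AN$, applying Corollary \ref{spectral_cor2} at $n = \ell$ already delivers the lower vanishing $H^k_{ct}(P, L^p(P)) = 0$ for $k < \ell$, hence for every $k \leqslant (d-1)/p$. For $k > (d-1)/p$ I will use the spectral sequence of Corollary \ref{spectral_cor1},
\[
E_2^{i,j} = H^i_{ct}\bigl(M,\, L^p(M,\, H^j_{ct}(AN, L^p(AN)))\bigr) \;\Longrightarrow\; H^{i+j}_{ct}(P, L^p(P)).
\]
Under Pansu's concentration at $j = \ell$ (non-integer regime), the $E_2$-page has a single non-zero row, so the sequence collapses on $E_2$ and reading off that row yields the linear isomorphism
\[
H^k_{ct}(P, L^p(P)) \;\simeq\; H^{k-\ell}_{ct}\bigl(M,\, L^p(M,\, H^\ell_{ct}(AN, L^p(AN)))\bigr),
\]
which is exactly the third sub-claim of part (2).

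\textbf{Upper vanishings and part (1).} The remaining vanishings will follow from a dimensional bound applied to $M$: its Riemannian symmetric space $X_M$ has dimension $D-d$, and van Est's theorem (relative Lie algebra cohomology) gives $H^i_{ct}(M, V) = 0$ for $i > D - d$ and any suitable Fr\'echet $M$-module $V$. Plugging in $V = L^p(M, H^\ell_{ct}(AN, L^p(AN)))$ yields unreduced $H^k_{ct}(P) = 0$ as soon as $k - \ell > D - d$, i.e.\ $k \geqslant (d-1)/p + D - d + 2$; the same dimensional argument run on the reduced cohomology of $M$ closes the gap $[(d-1)/p + D - d + 1,\, (d-1)/p + D - d + 2)$ in the reduced degree. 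Part (1) is then routine: $H^0_{ct}(G, L^p(G)) = L^p(G)^G = 0$ because $G$ has infinite Haar measure, and the vanishing for $k \geqslant D$ follows from the cohomological dimension of $G$ (dimension $D$ of the contractible symmetric space $X = G/K$) together with triviality of $L^p(G)^G$ at top degree via Poincar\'e duality.

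\textbf{Main obstacle.} The hardest part will be transporting Pansu's description of the $L^p$-cohomology of $\mathbb{H}^d$ into the group-theoretic framework of Corollaries \ref{spectral_cor1}--\ref{spectral_cor2}: specifically, confirming that $H^j_{ct}(AN, L^p(AN))$ is concentrated at $j = \ell$ in the correct (unreduced or reduced) sense, that $H^\ell$ is Hausdorff, and that the homotopy equivalence with a Banach complex is preserved under the quasi-isometry $AN \sim \mathbb{H}^d$. A careful bookkeeping of non-Hausdorff phenomena at the critical degree $\ell$ is what accounts for the asymmetry between the unreduced and reduced statements of the theorem, and for the split of the non-vanishing strip into the unreduced bound $k \geqslant (d-1)/p + D - d + 2$ and the shifted reduced bound at $(d-1)/p + D - d + 1$.
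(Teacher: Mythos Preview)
Your reduction to $P$, the use of Pansu's description of ${\rm H}^*_{\mathrm{ct}}\bigl(AN,L^p(AN)\bigr)$, the lower vanishing via Corollary~\ref{spectral_cor2}, and the single--row collapse of the spectral sequence giving the isomorphism in the third sub--item are exactly what the paper does. The divergence is in how you obtain the \emph{upper} vanishing and the reduced vanishing, and there your argument has a real gap.

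You deduce the upper bound $k\geqslant\frac{d-1}{p}+D-d+2$ by first invoking the middle isomorphism and then applying a van~Est dimensional bound for $M$. But the middle isomorphism, via Corollary~\ref{spectral_cor1}, requires \emph{every} ${\rm H}^j_{\mathrm{ct}}\bigl(AN,L^p(AN)\bigr)$ to be Hausdorff; by Lemma~\ref{prop_SL} this fails precisely when $\frac{d-1}{p}\in{\bf N}$, at the degree $j=\frac{d-1}{p}+1$. So your upper vanishing is established only on the non--integer locus, whereas the theorem claims it for every $p$. A second problem concerns the reduced statement: the spectral sequence isomorphism is only \emph{linear} (the paper explicitly flags this after Corollary~\ref{spectral_cor2}), so it does not transport reduced cohomology; and at the top degree $i=D-d$ for $M$ there is no outgoing differential, so van~Est gives you no control on $\overline{{\rm H}^{D-d}_{\mathrm{ct}}}(M,V)$.

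The paper circumvents both issues by a different idea: having obtained the lower vanishing $L^p{\rm H}^k(X)=0$ for $k\leqslant\frac{d-1}{p}$, it applies Poincar\'e duality on the symmetric space $X$ (Lemma~\ref{lemma_duality}) to flip this into the upper range. Duality says simultaneously that $L^p\overline{{\rm H}^k}(X)=0$ for $k\geqslant\frac{d-1}{p}+D-d+1$ and that $L^p{\rm H}^k(X)$ is Hausdorff for $k\geqslant\frac{d-1}{p}+D-d+2$; combining these gives both the reduced and the unreduced upper vanishings for all $p$, with no reference to the spectral sequence. The same duality, together with the linear isoperimetric inequality to get Hausdorffness of $L^p{\rm H}^1(X)$, also handles the degree~$D$ case in part~(1), which your ``cohomological dimension plus $L^p(G)^G=0$'' sketch does not quite reach (dimension alone only gives $k>D$).
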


In particular the possible couples $(p,k)$ for which ${\rm H}^k _{\mathrm{ct}}\bigl( G, L^p(G) \bigr) \neq 0$ lie in the strip $\frac{d-1}{p} < k <\frac{d-1}{p} + D-d+2$ of width $D-d+2$.
The numerical computation of the width will occupy Section \ref{8}.

A key ingredient in the proof of Theorem \ref{theorem_vanishing} is the following lemma which is a straightforward consequence of Pansu's results.

\begin{lemma} \label{prop_SL} Suppose that $AN$ is a Lie group quasi-isometric to a real hyperbolic space $\mathbb H ^d$.
Then for $p>1$, its group $L^p$-cohomology satisfies 
\begin{enumerate}
\item ${\rm H}^0 _{\mathrm{ct}} \bigl(AN, L^p(AN) \bigr) = {\rm H}^k _{\mathrm{ct}} \bigl(AN, L^p(AN)\bigr) = 0$ for $k \geqslant d$.
\item Suppose $k \in \{1, ... ,d-1\}$. Then:
\begin{itemize}
\item ${\rm H}^k _{\mathrm{ct}} \bigl(AN, L^p(AN)\bigr)$ is Hausdorff if and only if
$k \neq \frac{d-1}{p} +1$,
\item ${\rm H}^k _{\mathrm{ct}} \bigl(AN, L^p(AN)\bigr) = 0$ if and only if  either $k \leqslant \frac{d-1}{p}$ or
$k > \frac{d-1}{p} + 1$.
\end{itemize}
\end{enumerate}
\end{lemma}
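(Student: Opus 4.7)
The plan is to reduce the computation of the group $L^p$-cohomology of $AN$ to the asymptotic $L^p$-cohomology of $\mathbb H^d$ via the machinery already established in the preceding sections, and then to invoke Pansu's classical computation on real hyperbolic space. This turns the lemma into a straightforward translation exercise.

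First, since $AN$ is a connected Lie group, it is locally compact, second countable, and admits a left-invariant proper metric. Theorem \ref{cont_asymp_thm} therefore gives a topological isomorphism ${\rm H}^*_{\mathrm{ct}}\bigl(AN, L^p(AN)\bigr) \simeq L^p{\rm H}^*_{AS}(AN)$, and the analogue for reduced cohomology. Using the given quasi-isometry $AN \to \mathbb H^d$, Theorem \ref{Pansu_thm} then yields a topological isomorphism $L^p{\rm H}^*_{AS}(AN) \simeq L^p{\rm H}^*_{AS}(\mathbb H^d)$. Composing the two, one obtains
\[
{\rm H}^*_{\mathrm{ct}}\bigl(AN, L^p(AN)\bigr) \simeq L^p{\rm H}^*_{AS}(\mathbb H^d),
\]
and similarly for the reduced version, so Hausdorffness and vanishing transport freely between the two sides. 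It therefore suffices to establish the stated dichotomy for the asymptotic $L^p$-cohomology of $\mathbb H^d$.

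Second, I would invoke Pansu's theorem \cite{Pa99, P1}. On $\mathbb H^d$ the asymptotic $L^p$-cohomology coincides with the $L^p$-de Rham cohomology — this is the non-positively curved, uniformly contractible, bounded geometry case of the comparison recalled at the end of Section \ref{3}. Pansu's explicit computation of the $L^p$-de Rham cohomology of $\mathbb H^d$ then produces the three facts needed here: (i) the cohomology vanishes in degree $0$ and in every degree $\geqslant d$; (ii) for $1 \leqslant k \leqslant d-1$, the reduced space $L^p\overline{{\rm H}^k}(\mathbb H^d)$ is non-zero exactly in the range $\tfrac{d-1}{p} < k \leqslant \tfrac{d-1}{p}+1$; (iii) the unreduced space $L^p{\rm H}^k(\mathbb H^d)$ is Hausdorff in every degree other than $k = \tfrac{d-1}{p}+1$, and in this exceptional degree (meaningful only when $\tfrac{d-1}{p}$ is an integer) it is non-Hausdorff, hence non-zero. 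Reading these statements through the isomorphism of the previous paragraph yields precisely assertions (1) and (2) of the lemma.

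The degree-$0$ vanishing can also be obtained directly: $AN$ being non-compact with infinite Haar measure, $L^p(AN)$ contains no non-zero invariant vectors. The vanishing for $k \geqslant d$ reflects the cohomological dimension of $\mathbb H^d$. The only non-routine part of the argument is the careful bookkeeping of Pansu's statements, separating cleanly the Hausdorff threshold $k = \tfrac{d-1}{p}+1$ from the vanishing thresholds $k \leqslant \tfrac{d-1}{p}$ and $k > \tfrac{d-1}{p}+1$, so as to recover the ``if and only if'' formulations; but once this dictionary is set down the conclusion is immediate from quasi-isometric invariance and Pansu's theorem.
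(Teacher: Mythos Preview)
Your proposal is correct and follows essentially the same approach as the paper: reduce to the $L^p$-cohomology of $\mathbb H^d$ via quasi-isometric invariance, then quote Pansu's computation. The only difference is in the bridge used: the paper passes through a cocompact lattice in $\Isom(\mathbb H^d)$ and Proposition~\ref{simplicial_prop} to reach the simplicial $L^p$-cohomology of $\mathbb H^d$, whereas you stay with asymptotic $L^p$-cohomology throughout and invoke Pansu's comparison (asymptotic $=$ simplicial for non-positively curved spaces) together with the $L^p$-de Rham theorem; both routes are valid and land on the same Pansu computation.
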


\begin{proof}[Proof of Lemma \ref{prop_SL}]
By Corollary \ref{cont_asymp_cor}, the group $L^p$-cohomology of $AN$ is isomorphic to the group $L^p$-cohomology of any cocompact lattice in $\Isom (\mathbb {H}^d)$; 
which in turn is the same as the simplicial $L^p$-cohomology of $\mathbb H ^d$ -- see Proposition \ref{simplicial_prop}. By the simplicial $L^p$-cohomology
of a complete Riemannian manifold $X$, we mean the simplicial $L^p$-cohomology of any bounded geometry quasi-isometric simplicial decomposition of $X$. 
Now, the simplicial $L^p$-cohomology of $\mathbb H ^d$ has been computed by Pansu \cite{Pa99, P1} (\footnote{In \cite{Pa99, P1}, Pansu computes the de Rham $L^p$-cohomology
of $\mathbb H ^d$, \emph{i.e.}\! the cohomology of the de Rham complex of $L^p$ differential forms with differentials in $L^p$.
In \cite{Pa95}, he proves an $L^p$-cohomology version of the de Rham theorem, namely the homotopy equivalence between the de Rham $L^p$-complex 
and the simplicial 
$L^p$-complex, for complete Riemannian manifolds of bounded geometry. The fact that the simplicial $L^p$-cohomology is Hausdorff for $k\neq \frac{d-1}{p} +1$, and vanishes
for $k < \frac{d-1}{p}$ and  $k > \frac{d-1}{p}+1$, can also be established 
without the de Rham theorem, as a consequence of Corollary B in \cite{B16}.}).
The result of this computation is precisely the statement of our lemma.
\end{proof}

Parts of Theorem \ref{theorem_vanishing} rely on the following version of Poincar\'e duality established in
\cite{P1} Corollaire 14 (\footnote{Again, Pansu establishes this result for the de Rham $L^p$-cohomology.
One obtains the result for the simplicial $L^p$-cohomology by using the $L^p$-cohomology version of the de Rham theorem. 
The lemma can also be established more directly without the de Rham theorem, as a consequence of Proposition 1.2 and Theorem 1.3 in \cite{B16}.}).

\begin{lemma} \label{lemma_duality} Let $X$ be a complete Riemannian manifold of bounded geometry and of dimension $D$.
Denote by $L^p {\rm H}^* (X)$ and $L^p \overline{{\rm H}^*} (X)$ its simplicial $L^p$-cohomology and its reduced
simplicial $L^p$-cohomology.
Let $q = p/(p-1)$ and $0 \leqslant k \leqslant D$. Then 
\begin{enumerate}
\item $L^p {\rm H}^k (X)$ is Hausdorff if and only if $L^q {\rm H}^{D-k +1} (X)$ is Hausdorff.
\item $L^p \overline{{\rm H}^k} (X) = 0$ if and only if $L^q \overline{{\rm H}^{D-k}} (X)  = 0$.
\end{enumerate}
\end{lemma}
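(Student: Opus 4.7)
The plan is to establish the duality at the level of the $L^p$-de Rham complex and then transfer the conclusion to the simplicial setting via Pansu's $L^p$-de Rham theorem \cite{Pa95}, which is the very tool already invoked in the proof of Lemma \ref{prop_SL}. Working with the graph-norm Banach spaces $\Omega^{k,p}(X) = \{\alpha \in L^p\Omega^k(X) : d\alpha \in L^p\Omega^{k+1}(X)\}$, the central device will be the wedge-product pairing $L^p\Omega^k(X) \times L^q\Omega^{D-k}(X) \to \hr$, $(\alpha,\beta)\mapsto \int_X \alpha\wedge\beta$, which, combined with the Hodge star, realizes $L^q\Omega^{D-k}(X)$ as the topological dual of the reflexive Banach space $L^p\Omega^k(X)$. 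I would then regard $d_{k-1}$ as a densely defined closed unbounded operator from $L^p\Omega^{k-1}$ to $L^p\Omega^k$, with domain $\Omega^{k-1,p}$; the crucial preliminary step is the adjointness identity, asserting that the Banach adjoint of $d_{k-1}$ is, up to a sign, the analogous operator $d_{D-k}: L^q\Omega^{D-k}\to L^q\Omega^{D-k+1}$ with domain $\Omega^{D-k,q}$. This amounts to Stokes' theorem on the non-compact manifold $X$, which I would prove by approximating $\alpha$ and $\beta$ by smooth compactly supported forms using cutoff functions with gradients satisfying $|d\chi_R|\le C/R$ (available thanks to bounded geometry), and bounding the resulting error terms via H\"older in the $L^p\times L^q \to L^1$ pairing.

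Granting the adjointness, part (1) is immediate from the closed-range theorem for densely defined closed operators between Banach spaces: the image of $d_{k-1}$ is closed in $L^p\Omega^k$ if and only if the image of its adjoint is closed in $L^q\Omega^{D-k+1}$. Since the graph-norm and $L^p$-topologies agree on $\ker d_k$, Hausdorffness of $L^p{\rm H}^k(X)$ is precisely closedness of $\mathrm{im}\, d_{k-1}$ in $L^p\Omega^k$, and symmetrically on the $L^q$ side, yielding the stated equivalence. For part (2), set $A = \ker d_k$ and $B = \overline{\mathrm{im}\, d_{k-1}}$ inside $L^p\Omega^k$. The Hahn--Banach identities $(\mathrm{im}\, T)^\perp = \ker T^*$ and $(\ker T)^\perp = \overline{\mathrm{im}\, T^*}$, applied to $T = d_{k-1}$ and $T = d_k$, give $B^\perp = \ker d_{D-k}$ and $A^\perp = \overline{\mathrm{im}\, d_{D-k-1}}$ inside $L^q\Omega^{D-k}$. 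Combined with the general Banach formula $(A/B)^* \cong B^\perp/A^\perp$, this produces a topological isomorphism $(L^p\overline{{\rm H}^k}(X))^* \cong L^q\overline{{\rm H}^{D-k}}(X)$; part (2) then follows because a Banach space vanishes if and only if its dual does.

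The main obstacle is the adjointness step itself. Beyond justifying Stokes' theorem without boundary contributions on a non-compact manifold, it requires the density of smooth compactly supported forms in $\Omega^{k,p}$ with respect to the graph norm, which uses bounded geometry in an essential way since the Leibniz-type cutoff errors $d\chi_R \wedge \alpha$ must be controllable uniformly as $R\to\infty$. Once this density and Stokes' identity are in hand, the remainder is abstract Banach-space duality. The above scheme is essentially what Pansu carries out in the de Rham formulation of \cite{P1} Corollaire 14, while the alternative route through \cite{B16} Proposition 1.2 and Theorem 1.3 replaces the smooth cutoff argument by a simplicial one on a bounded-geometry triangulation.
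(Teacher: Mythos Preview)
The paper does not supply its own proof of this lemma: it is quoted from Pansu \cite[Corollaire~14]{P1}, with the footnote explaining that Pansu's de~Rham statement is transferred to the simplicial setting via the $L^p$ de~Rham theorem of \cite{Pa95} (or alternatively via \cite[Prop.~1.2 and Th.~1.3]{B16}). Your proposal correctly reconstructs exactly this route --- wedge pairing plus Hodge star to identify $(L^p\Omega^k)^* \cong L^q\Omega^{D-k}$, the closed range theorem for part~(1), the annihilator identities $(\mathrm{im}\,T)^\perp=\ker T^*$ and $(\ker T)^\perp=\overline{\mathrm{im}\,T^*}$ together with $(A/B)^*\cong B^\perp/A^\perp$ for part~(2), and graph-norm density of $C_c^\infty$ forms as the genuine technical ingredient --- and you explicitly acknowledge at the end that this is Pansu's scheme. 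So the proposal is correct and coincides with the approach the paper invokes by citation.
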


 \begin{proof}[Proof of Theorem \ref{theorem_vanishing}]
(1). According to Corollary \ref{cont_asymp_cor}, the group $L^p$-cohomology of $G$ is isomorphic to the group $L^p$-cohomology of any of its cocompact lattices; 
which in turn is isomorphic to the simplicial $L^p$-cohomology of $X$ -- see Proposition \ref{simplicial_prop}.  
The latter is trivially null in degree $0$ and degrees $k > \dim X$. It is also null in degree $D = \dim X$. Indeed, the lemma above implies that 
$L^p \overline{{\rm H}^D} (X) = 0$. Moreover, since $X$ satisfies a linear isoperimetric inequality, $L^p {\rm H}^1 (X)$ is Hausdorff -- see \cite{P2} or \cite{G}.
Hence the lemma implies that $L^p {\rm H}^D (X)$ is Hausdorff, and so it is null.

(2). Since $G$ and $P$ are quasi-isometric, one has ${\rm H}^* _{\mathrm{ct}}\bigl( G, L^p(G) \bigr) \simeq {\rm H}^* _{\mathrm{ct}}\bigl(P, L^p(P)\bigr)$ by Corollary \ref{cont_asymp_cor}.
The vanishing of ${\rm H}^k _{\mathrm{ct}}\bigl(P, L^p(P)\bigr)$ for every $k \leqslant \frac{d-1}{p}$, follows directly from 
the analogous result for $AN$ in Lemma \ref{prop_SL}, and from Corollary \ref{spectral_cor2} applied with $P = M \ltimes AN$, $Q = M$ and $R=AN$. 
We note that the assumptions of Corollary \ref{spectral_cor2} are satisfied; indeed $C^*\bigl(AN, L^p(AN)\bigr)^{AN}$ is homotopy equivalent to the simplicial 
$L^p$-complex of $\mathbb H ^d$  which is a complex of Banach spaces. 

To show that ${\rm H}^k _{\mathrm{ct}} \bigl( G, L^p(G) \bigr) = 0$ for $k \geqslant \frac{d-1}{p} +D -d +2$, we use again the isomorphism between the group $L^p$-cohomology 
of $G$ and the simplicial $L^p$-cohomology of 
$X$. We know from above that the latter is null for $k \leqslant \frac{d-1}{p}$.
Then Lemma \ref{lemma_duality} implies that $L^p \overline{{\rm H}^k} (X)$ is null for $k \geqslant \frac{d-1}{p} +D -d +1$ and 
that $L^p {\rm H}^k (X)$ is Hausdorff for $k \geqslant \frac{d-1}{p} +D -d +2$. 
The proof of the first two items is now complete.

Suppose that $\frac{d-1}{p}  \notin {\bf N}$. Then, according to Lemma \ref{prop_SL},
the only degree $\ell$ such that ${\rm H}^\ell _{\mathrm{ct}} \bigl(AN, L^p(AN)\bigr)$ is non-zero, is $\ell = \lfloor \frac{d-1}{p} \rfloor +1$.
Moreover ${\rm H}^\ell _{\mathrm{ct}} \bigl(AN, L^p(AN)\bigr)$ is Hausdorff. By applying Corollary \ref{spectral_cor1}, one obtains the third item.
\end{proof}

\begin{remark}\label{remark_SL}  In view of the last item in Theorem \ref{theorem_vanishing}, to analyse further the case where $\frac{d-1}{p} <k< \frac{d-1}{p} + D-d+2$, 
in particular to decide whether 
${\rm H}^k _{\mathrm{ct}} \bigl( G, L^p(G) \bigr)$ vanishes or not, one would like to take advantage of a good description of ${\rm H}^\ell _{\mathrm{ct}} \bigl(AN, L^p(AN)\bigr)$
and of the $M$-action on it. In particular 
one could try to exploit the description of ${\rm H}^\ell _{\mathrm{ct}} \bigl(AN, L^p(AN)\bigr)$ as a functional space on 
$N \simeq {\bf R}^{d-1} \simeq \partial \mathbb H ^d \setminus \{\infty \}$, that is established in Section 8.2 of
\cite{Pa99} (see also \cite{Pa1} or \cite{Rez} or \cite{BP2} for the case $\ell =1$). We will follow this idea in Section \ref{9} to prove the 
vanishing of the first group $L^p$-cohomology of admissible Lie groups of real rank $\geqslant 2$, see Corollary \ref{intro_cor2}.
\end{remark}

\begin{example}
\label{ex_SL} 
We consider here the example of the simple Lie group $G = {\rm SL}_n( {\bf R})$. 
Then the Lie algebra of $G$ is the space of square matrices of size $n$ and of trace 0, a maximal ${\bf R}$-split torus is given by diagonal matrices and it acts on the Lie algebra by conjugation. 
The corresponding root system is of type ${\rm A}_{n-1}$ and any parabolic subgroup of $G$ can be conjugated to a suitable subgroup of upper triangular-by-blocks matrices of determinant 1 \cite[11.14]{Borel-Arith}. 
Therefore, every maximal (proper) parabolic subgroup of ${\rm SL}_n( {\bf R})$ is conjugated to a subgroup with 2 blocks, i.e. of the form 
$$P = \bigg\{ 
\begin{pmatrix}
p_1 & p_3 \\
0 & p_2
\end{pmatrix}
 \in {\rm SL}_n( {\bf R}) ~\Big\vert~ p_1 \in M_{s,s} , p_2 \in M_{n-s,n-s} , p_3 \in M_{s,n-s}\bigg\},$$
with $0<s<n$. Moreover $P = M \ltimes AN$ with
$$M^0 = \bigg\{
\begin{pmatrix}
m_1 & 0 \\
0 & m_2
\end{pmatrix}
\Big\vert~ m_1 \in {\rm SL}_s( {\bf R}), m_2 \in {\rm SL}_{n-s}( {\bf R})\bigg\},$$
$$A = \bigg\{
\begin{pmatrix}
\lambda_1 I & 0 \\
0 & \lambda_2 I 
\end{pmatrix}
\Big\vert~ \lambda_1,\lambda_2 \in {\bf R} ^* _+, ~ \lambda_1 ^s \cdot \lambda_2 ^{n-s} =1 \bigg\} \simeq {\bf R} ^* _+,$$ 
and 
$N =\bigg\{
\begin{pmatrix}
I & x \\
0 & I
\end{pmatrix}
\Big\vert~ x \in M_{s, n-s} \bigg\} \simeq ({\bf R} ^{s(n-s)}, +)$.
Since one has 
$$ \begin{pmatrix}
\lambda_1 I & 0 \\
0 & \lambda_2 I 
\end{pmatrix}
\!\begin{pmatrix}
I & x \\
0 & I 
\end{pmatrix}
\!\begin{pmatrix}
\lambda_1 I & 0 \\
0 & \lambda_2 I 
\end{pmatrix}
^{-1}
= \begin{pmatrix}
I & \lambda_1 \lambda_2 ^{-1} x \\
0 & I 
\end{pmatrix}
= \begin{pmatrix}
I & \lambda_1 ^{\frac{n}{n-s}}x \\
0 & I 
\end{pmatrix},$$
we get that $AN \simeq {\bf R} \ltimes _\varphi  {\bf R} ^{s(n-s)}$, with $\varphi (t)(x) = e^{\frac{tn}{n-s}}x$.
Therefore $AN$ is isometric to $\mathbb H ^{s(n-s)+1}$ and ${\rm SL}_n( {\bf R})$ is admissible. Take $s = \lfloor \frac{n}{2} \rfloor$. Then one has
$d-1 = \dim N = s(n-s) = \frac{n^2}{4}$ if $n$ is even, and $\frac{n^2-1}{4}$ otherwise.
Since $D = \dim X = \frac{(n-1)(n+2)}{2}$, Theorem \ref{theorem_vanishing} gives the following possibly non-vanishing strip 
for the $L^p$-cohomology of ${\rm SL}_n( {\bf R})$ :
\begin{itemize}
\item $\frac{n^2}{4p} < k < \frac{n^2}{4p} + \frac{n(n+2)}{4}$ ~if $n$ is even, and
\item $\frac{n^2-1}{4p} < k < \frac{n^2-1}{4p} + \frac{(n+1)^2}{4}$ ~if $n$ is odd,
\end{itemize}
which can be summarized by:
$\lfloor \frac{n^2}{4}\rfloor \cdot\frac{1}{p} <k<\lfloor \frac{n^2}{4}\rfloor \cdot\frac{1}{p} + \lfloor \frac{(n+1)^2}{4}\rfloor $.
\end{example}

\section{The admissible simple real Lie groups}
\label{7}

We are looking for the simple Lie groups that we called admissible, \emph{i.e.} that contain a proper parabolic subgroup, say $P$, with Langlands decomposition $M_P \ltimes A^PN^P$ \cite[Prop.\! 7.83]{Knapp}, where $A^P$ is a $1$-dimensional group of simultaneously ${\bf R}$-diagonalisable matrices, where $N^P$ is a nilpotent group normalized by $A^P$, and for which the following metric condition holds: 
$(*)$ {\it the underlying Riemannian manifold of the radical $A^PN^P$ is quasi-isometric to a real hyperbolic space}.

The list of admissible simple Lie groups is given by the following (see also the tables provided at the end of Sect. \ref{8}). 

\begin{proposition}
The admissible simple real Lie groups are those whose relative root system is of type ${\rm A}_l$, ${\rm B}_l$, ${\rm C}_l$, ${\rm D}_l$, ${\rm E}_6$ and ${\rm E}_7$.
In particular, for classical types the only excluded groups are those with non-reduced root system ${\rm BC}_l$. 
\end{proposition}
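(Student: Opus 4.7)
The plan is to translate the metric condition $(*)$ into a combinatorial condition on the restricted root system $\Sigma$ of $G$ and then to read off the answer from the classification of irreducible root systems. Throughout I write $P=P_\alpha$ for the maximal proper parabolic attached to a simple restricted root $\alpha \in \Sigma$, with Levi decomposition $P_\alpha = M_{P_\alpha} \ltimes A^{P_\alpha} N^{P_\alpha}$.

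First I would argue that $(*)$ is equivalent to the existence of a simple root $\alpha$ such that $\dim A^{P_\alpha}=1$ and $N^{P_\alpha}$ is abelian. The rank-one constraint $\dim A^P=1$ (that is, $P$ is a maximal proper parabolic) comes from the fact that the split rank $\dim A$ of a Heintze group $\mathbb R^r \ltimes N$ is a quasi-isometry invariant and equals one for $\hh^d = \mathbb R \ltimes \mathbb R^{d-1}$. The abelianness of $N^P$ follows from Pansu's quasi-isometric rigidity for rank-one Heintze groups: a group $\mathbb R \ltimes N$ is QI to a real hyperbolic space only if $N$ is abelian, since the complex, quaternionic and octonionic hyperbolic models have Heisenberg-type $N$ and are distinguished from $\hh^d$ by the conformal dimension of their visual boundary. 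Conversely, as explained in the next paragraph, when $N^P$ is abelian the group $A^P$ acts on $N^P \simeq \mathbb R^{d-1}$ by a single positive scalar character, so the left-invariant Riemannian metric exhibits $A^P N^P$ as isometric to $\hh^d$.

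Next I would rephrase ``$N^P$ abelian'' in terms of the root system. The group $N^{P_\alpha}$ is generated by the root subgroups $U_\beta$ for positive roots $\beta$ in which $\alpha$ appears with nonzero coefficient, and two such root subgroups commute if and only if the sum of the corresponding roots is not again a root. When $\Sigma$ is reduced, using that the coefficient of $\alpha$ in any positive root is bounded above by its coefficient in the highest root $\widetilde\alpha$, one sees that $N^{P_\alpha}$ is abelian if and only if the coefficient of $\alpha$ in $\widetilde\alpha$ equals $1$; in that case every positive root involving $\alpha$ does so with coefficient exactly $1$, and the $A^P$-action on $N^P$ is by the single scalar character $\alpha\vert_{A^P}$. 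When $\Sigma = \mathrm{BC}_l$, removing the short root $\alpha_l = e_l$ yields a non-abelian $N^{P_{\alpha_l}}$ since $[U_{e_l}, U_{e_l}] \subset U_{2e_l}$ is nontrivial, and removing any other simple root $\alpha_k$ still leaves positive roots (such as $2e_1$) with $\alpha_k$-coefficient equal to $2$, so $N^{P_{\alpha_k}}$ is non-abelian in every case.

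Finally I would inspect the coefficient vectors of the highest roots for each irreducible reduced type: $(1,\dots,1)$ for $\mathrm{A}_l$; $(1,2,\dots,2)$ for $\mathrm{B}_l$; $(2,\dots,2,1)$ for $\mathrm{C}_l$; $(1,2,\dots,2,1,1)$ for $\mathrm{D}_l$; $(1,2,2,3,2,1)$ for $\mathrm{E}_6$; $(2,2,3,4,3,2,1)$ for $\mathrm{E}_7$; $(2,3,4,6,5,4,3,2)$ for $\mathrm{E}_8$; $(2,3,4,2)$ for $\mathrm{F}_4$; and $(3,2)$ for $\mathrm{G}_2$. A coefficient equal to $1$ occurs precisely in types $\mathrm{A}_l, \mathrm{B}_l, \mathrm{C}_l, \mathrm{D}_l, \mathrm{E}_6, \mathrm{E}_7$, which together with the $\mathrm{BC}_l$ obstruction yields the proposition. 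The main obstacle is the Pansu-type quasi-isometric rigidity input in the first step; the remaining two steps amount to elementary root-theoretic bookkeeping.
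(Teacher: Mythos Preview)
Your proposal is correct and follows the same strategy as the paper: reduce $(*)$ to a combinatorial condition on the restricted root system, then check case by case. The paper phrases the condition as ``every positive root has $\gamma$-coordinate $0$ or $1$'' (so that $A_\gamma$ acts on $N_\gamma$ via the single character $\gamma$) and verifies it type by type from the full lists of positive roots in the Bourbaki \emph{Planches}; your equivalent reformulation via the coefficient of $\gamma$ in the highest root $\widetilde\alpha$ is the standard shortcut and reduces the case analysis to a single line. You are also more explicit than the paper about the quasi-isometric rigidity input needed in the first step: the paper simply writes ``by homogeneity, it implies that $A^PN^P$ is actually homothetic to a real hyperbolic space'', which hides the same Pansu-type argument you invoke. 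One small caveat: your conformal-dimension justification, as phrased, only distinguishes $\hh^d$ from the other rank-one \emph{symmetric} spaces, not from arbitrary Heintze groups with non-abelian $N$; the full statement that a Heintze group quasi-isometric to $\hh^d$ must have abelian $N$ with scalar action needs Pansu's differentiation theorem (or the quasi-isometric rigidity of $\hh^d$) more directly, not just a comparison with $\hh_{\bf C}^n$, $\hh_{\bf H}^n$, $\hh_{\bf O}^2$.
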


\begin{proof}
We use the facts and notations recalled in the conventions of the introduction and showed in \cite[\textsection 14]{BoTi} or in \cite[\textsection 24.C]{Borel-LAG}. 
Let $P_\varnothing$ be a minimal parabolic subgroup in $G$ containing $A$, a maximal split torus. 
The subgroup $A$ defines a set $\Phi$ of roots with respect to $A$ and the inclusion $A \subset P_\varnothing$ defines, inside $\Phi$, a subset $\Phi^+$ of positive roots and a subset $\Delta$ of simple roots.
We have: $\Delta \subset \Phi^+ \subset \Phi$. 
Any parabolic subgroup is conjugated to exactly one of the parabolic subgroups containing $P_\varnothing$, and the latter subgroups are parametrized by subsets of $\Delta$ \cite[21.12]{Borel-LAG}.
More precisely, they are of the form $P_{\Delta'}= M_{\Delta'} \ltimes A^{\Delta'}N^{\Delta'}$, where $M_{\Delta'}$ is the Levi subgroup generated by the roots of $\Phi$ in the linear span ${\bf Z}\Delta'$ of $\Delta' \subset \Delta$, the subtorus $A^{\Delta'}$ of $A$ lies in the kernels of the roots in $\Delta'$ and the Lie algebra of the unipotent group $N^{\Delta'}$ is linearly spanned by the weight spaces indexed by the roots in $\Phi^+ \setminus \Phi(\Delta')$ where $\Phi(\Delta') = \Phi \cap {\bf Z}\Delta'$ \cite[21.11]{Borel-LAG}. 
This description is consistent with the notation $P_\varnothing$ for a minimal parabolic subgroup; note that we also have $G=P_\Delta$. 

Let us go back to condition $(*)$. 
By homogeneity, it implies that $A^PN^P = A^P \ltimes N^P$ is actually isometric (up to a positive multiplicative constant) to a real hyperbolic space, and in terms of roots it implies that $A^P$ must act on $N^P$ via a single character. 
The first consequence is that $P$ must be maximal for inclusion among proper parabolic subgroups.
Therefore, since we work up to conjugacy, this implies that we may -- and shall -- assume that there is a simple root $\gamma \in \Delta$ such that $P = P_{\Delta \setminus \{\gamma \}}$. 
To simplify notation, we set: $A_\gamma = A^{\Delta \setminus \{\gamma \}}$ and $N_\gamma = N^{\Delta \setminus \{\gamma \}}$, so that $P = P_{\Delta \setminus \{\gamma \}} = M_{\Delta \setminus \{\gamma \}} \ltimes A_\gamma N_\gamma$. 

The torus $A_\gamma$ is $1$-dimensional and it acts on $N_\gamma$ (more precisely, on the root subspaces of the Lie algebra of $N_\gamma$) via powers of $\gamma$ since it lies in the kernel of any other simple root of the basis $\Delta$. 
Note that the simple root $\gamma$ itself always appears as a weight in the root space decomposition of ${\rm Lie}(N_\gamma)$.
Therefore, for $A_\gamma N_\gamma$ to be homothetic to a real hyperbolic space, it is necessary and sufficient that $A_\gamma$ act on $N_\gamma$ via the single character $\gamma$ (\emph{i.e.} we must avoid powers $\gamma^k$ with $k \geqslant 2)$. 

We have obtained this way a combinatorial interpretation of condition $(*)$. 
Indeed, let $\delta$ be a root occuring as a weight in the root space decomposition of ${\rm Lie}(N_\gamma)$: it is a root in $\Phi^+$ for which the coordinate along $\gamma$ in the basis $\Delta$ has a coefficient $m \geqslant 1$ (since vanishing amounts to being in the root system $\Phi(\Delta \setminus \{ \gamma \})$ of $M_{\Delta \setminus \{\gamma \}}$). 
In this case, any $a \in A_\gamma$ acts on the root space of weight $\delta$ via the scalar $\delta(a)^m$. 

The remaining task now is to investigate the descriptions of root systems, and to check for which ones there is a simple root $\gamma \in \Delta$ with respect to which the $\gamma$-coordinate in the basis $\Delta$ of each positive root is equal to $0$ or $1$: the first case says that the positive root belongs to $\Phi(\Delta \setminus \{ \gamma \})$ and the second one says that it belongs to $\Phi^+$ hence occurs as a weight for the $A_\gamma$-action on $N_\gamma$. 
We refer now to \cite[Planches]{BBK-Lie-4to6} and its notation.
Let us investigate the root systems by a case-by-case analysis of the formulas given for the linear decompositions of the positive roots according to the given bases (formulas (II) in [loc.\! cit.]): 
\begin{enumerate}
\item[$\bullet$] for type ${\rm A}_l$, any simple root can be chosen for $\gamma$ since no coefficient $\geqslant 2$ occurs in formula (II) of [loc.\! cit.\!, Planche I]; 
\item[$\bullet$] for type ${\rm B}_l$, the root $\gamma$ can be chosen to be $\alpha_1$ in the notation of  [loc.\! cit., Planche II];
\item[$\bullet$] for type ${\rm C}_l$, the root $\gamma$ can be chosen to be $\alpha_l$ which occurs only for two kinds of positive roots, each time with coefficient 1, in formula (II) of [loc.\! cit., Planche III];
\item[$\bullet$] for type ${\rm D}_l$, we use the formulas in [loc.\! cit., p.208] instead of Planche IV (where the formulas are not correct), to see that the root $\gamma$ can be chosen to be $\alpha_1$,  $\alpha_{l-1}$ or $\alpha_l$;
\item[$\bullet$] for type ${\rm E}_6$, the root $\gamma$ can be chosen to be $\alpha_1$ or $\alpha_6$, as inspection of the positive roots with some coefficient $\geqslant 2$ shows in [loc.\! cit., Planche V];
\item[$\bullet$] for type ${\rm E}_7$, the root $\gamma$ can be chosen to be $\alpha_7$, as inspection of the positive roots whose support contains $\alpha_7$ and with some coefficient $\geqslant 2$ shows in [loc.\! cit., Planche VI].
\end{enumerate}

To exclude the remaining case, it is enough to note that:
\begin{enumerate}
\item[$\bullet$] for type ${\rm E}_8$, the last positive root described on the first page of [loc.\! cit., Planche VII] has all its coefficients $\geqslant 2$;
\item[$\bullet$] for type ${\rm F}_4$ (resp. ${\rm G}_2$), the last root in (II) of [loc.\! cit., Planche VIII (resp. IX)] has the same property; 
\item[$\bullet$] for the only non-reduced irreducible root system of rank $l$, namely ${\rm BC}_l$, the positive root $\varepsilon_1$ has all its coefficients $\geqslant 2$ [loc.\! cit., p. 222].
\end{enumerate}
This finishes the determinations of the admissible simple Lie groups by means of their relative root systems. 
\end{proof}

As a complement, we can say that the groups that are not admissible can be listed thanks to \'E.~Cartan's classification as stated for instance in \cite[Table VI, pp.\! 532-534]{Helgason}.
The classical groups with non-reduced relative root system correspond to the cases where the value of $m_{2\lambda}$ is $\geqslant 1$ in this table. 
To sum up, we can reformulate the previous proposition in more concrete terms: 

\bigskip
\begin{proposition}
The admissible simple real Lie groups are the split simple real Lie groups of classical types and of types ${\rm E}_6$ and ${\rm E}_7$, the complex simple groups of classical types and of types ${\rm E}_6$ and ${\rm E}_7$ (all seen as real Lie groups), all non-compact orthogonal groups ${\rm SO}_{p,q}({\bf R})$, all special linear groups ${\rm SL}_n({\bf H}) = {\rm SU}^*_{2n}({\bf R})$ over the quaternions ${\bf H}$, the special unitary groups ${\rm SU}_{n,n}({\bf R})$ with $n \geqslant 2$, the groups ${\rm Sp}_{2n,2n}({\bf R}) = {\rm SU}_{n,n}({\bf H})$ with $n \geqslant 2$, the groups ${\rm SO}^*_{4n}({\bf R})={\rm SO}_{2n} ({\bf H})$ with $n \geqslant 2$, the exceptional groups of absolute type ${\rm E}_6$ and real rank $2$ and of absolute type ${\rm E}_7$ and real rank $3$.
\end{proposition}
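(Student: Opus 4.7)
The plan is to translate the previous proposition -- which characterises admissibility in terms of the relative (restricted) root system being of type $\mathrm{A}_l$, $\mathrm{B}_l$, $\mathrm{C}_l$, $\mathrm{D}_l$, $\mathrm{E}_6$ or $\mathrm{E}_7$ -- into the language of \'E.~Cartan's classification of simple real Lie algebras. The input is the table recalled in \cite[Table VI, pp.\! 532-534]{Helgason}, which lists, for each real form, the restricted root system together with the multiplicities $m_\lambda$ and $m_{2\lambda}$ of each restricted root $\lambda$. The key observation (already used implicitly in the previous proof) is that the restricted root system is non-reduced, \emph{i.e.\!} of type $\mathrm{BC}_l$, precisely when some $m_{2\lambda}\geqslant 1$; all other classical irreducible root systems are reduced. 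Thus the work reduces to scanning Helgason's table, discarding the rows with $m_{2\lambda}\geqslant 1$ and those whose restricted root system is of exceptional non-classical type ($\mathrm{F}_4$, $\mathrm{G}_2$, $\mathrm{E}_8$), and naming what remains.

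Concretely, I would organise the verification by absolute type. For a split real form the restricted root system coincides with the absolute one, so all split groups of types $\mathrm{A}_l, \mathrm{B}_l, \mathrm{C}_l, \mathrm{D}_l, \mathrm{E}_6, \mathrm{E}_7$ qualify, while split $\mathrm{F}_4$, $\mathrm{G}_2$, $\mathrm{E}_8$ are excluded. For a complex simple Lie group $G_{\mathbf C}$ seen as a real Lie group, the restricted root system is again the absolute one (with all multiplicities equal to $2$), so the same list of absolute types is admissible. For the remaining (non-split, non-complex) real forms I would run through the classical series: $\mathrm{SO}_{p,q}(\mathbf R)$ has restricted type $\mathrm{B}_{\min(p,q)}$ or $\mathrm{D}_{\min(p,q)}$ (reduced, hence admissible in all non-compact cases); $\mathrm{SL}_n(\mathbf H)=\mathrm{SU}^*_{2n}(\mathbf R)$ has restricted type $\mathrm{A}_{n-1}$; $\mathrm{SU}_{n,n}(\mathbf R)$ and its quaternionic analogue $\mathrm{Sp}_{2n,2n}(\mathbf R)=\mathrm{SU}_{n,n}(\mathbf H)$ have restricted type $\mathrm{C}_n$; $\mathrm{SO}^*_{4n}(\mathbf R)=\mathrm{SO}_{2n}(\mathbf H)$ has restricted type $\mathrm{C}_n$; while $\mathrm{SU}_{p,q}(\mathbf R)$ with $p\neq q$, $\mathrm{SU}_{p,q}(\mathbf H)$ with $p\neq q$, and $\mathrm{SO}^*_{4n+2}(\mathbf R)$ all have restricted type $\mathrm{BC}$ and are excluded. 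For the exceptional non-split forms, a direct look at the table retains only the real rank $2$ form of absolute type $\mathrm{E}_6$ (restricted type $\mathrm{A}_2$) and the real rank $3$ form of absolute type $\mathrm{E}_7$ (restricted type $\mathrm{C}_3$); every other exceptional real form has restricted type $\mathrm{F}_4$, $\mathrm{G}_2$, $\mathrm{BC}_l$ or is compact, and is therefore excluded.

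The main obstacle is purely bookkeeping: one must be careful not to confuse absolute and relative types (\emph{e.g.\!} $\mathrm{SO}^*_{4n}$ versus $\mathrm{SO}^*_{4n+2}$ give different restricted types, and only the former is admissible) and one must read the multiplicity column of Helgason's table attentively to detect the $\mathrm{BC}$ cases. Once this is done, matching each admissible entry in Helgason's table with a standard name for the corresponding real Lie group yields exactly the list announced in the proposition. No further geometric argument is needed beyond the previous proposition: the present statement is essentially a dictionary result.
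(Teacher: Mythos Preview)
Your proposal is correct and matches the paper's approach: the paper presents this proposition explicitly as a reformulation of the previous one via \'E.~Cartan's classification, referring to \cite[Table VI, pp.\! 532--534]{Helgason} to read off which real forms have restricted root system of the allowed types and listing the excluded groups afterward. Your organisation by absolute type and your identification of the $\mathrm{BC}$ cases through the $m_{2\lambda}$ column is exactly the bookkeeping the paper has in mind.
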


For all pratical purposes, we also provide the list of non-admissible simple real Lie groups. These are:
\begin{itemize}
\item all the compact simple groups,
\item the groups ${\rm SU}_{p,q}({\bf R})$ and ${\rm Sp}_{p,q}({\bf R})$ with $0<p<q$,
\item the simple groups ${\rm SO}^*_{2r}({\bf R})$ with odd $r$, 
\item the split groups, or the complex ones seen as real groups, of type ${\rm E}_8$, ${\rm F}_4$, ${\rm G}_2$, 
\item for each of ${\rm E}_6$, ${\rm E}_7$, ${\rm E}_8$, the real form of rank $4$,
\item the outer real form of rank $2$ of ${\rm E}_6$,
\item the real form of rank $1$ of ${\rm F}_4$.
\end{itemize}

\section{Numerical efficiency of the method}
\label{8}

Let us turn now to the numerical efficiency of the method, namely the computation of the actual width of the (possibly) non-vanishing strips, \emph{i.e.\!} the intervals outside of which our method proves vanishing of continuous group $L^p$-cohomology (Theorem \ref{theorem_vanishing}).

Recall that for a given admissible group $G$ with maximal compact subgroup $K$, this width is equal to $D-d+2$ where $D$ is the dimension of the symmetric space $X=G/K$ and $d$ is the dimension of the solvable radical $A_\gamma N_\gamma$. 
We keep the notation (in particular the notation $\gamma$) of the previous section. 
The simple root $\gamma$ defines a maximal proper parabolic subgroup, whose solvable radical contains a $1$-dimensional ${\bf R}$-diagonalizable part $A_\gamma$. 
Therefore we have $d = 1 + {\rm dim} \, N_\gamma$ and in fact: 
$$d-1 = {\rm dim} \, N_\gamma = \sum_{\alpha \in \Phi^+ \setminus \Phi(\Delta \setminus \{ \gamma \})}m_\alpha,$$
where $m_\alpha$ is the multiplicity ${\rm dim} \, N(\alpha)$ of the root space $N(\alpha)$ attached to $\alpha$.
Now, we have just seen that the root system of an admissible group is reduced, so we can freely use \cite{Humphreys-LA} (in which root systems are assumed to be reduced by definition). 
By [loc.\! cit., Lemma \! C p.\! 53], there are at most two root lengths in $\Phi$ and the Weyl group acts transitively on the roots of given length, therefore since this Weyl group action lifts to the action of $N_G(A)$ on the root spaces $N(\alpha)$, we deduce that there are at most two multiplicities, say $m_1$ and $m_2$, and with the notation $\Psi = \Phi^+ \setminus \Phi(\Delta \setminus \{ \gamma \})$ the previous formula becomes: 
$$d-1 = 
m_1 \cdot | \{\alpha \in \Psi : m_\alpha = m_1 \}| 
+ 
m_2 \cdot | \{\alpha \in \Psi : m_\alpha = m_2 \}|.$$

The case of split groups is easy since all multiplicities $m_\alpha$ are equal to 1 (hence in this case $d=1 + |\Psi|$), and the case of Weil restrictions (\emph{i.e.}\! complex groups seen as real ones) is easy too: all multiplicities are equal to 2, hence in this case $d=1 + 2 \cdot |\Psi|$. 

For the other cases, where two multiplicities may occur, we proceed as follows. 
For each admissible group, thanks to the previous section we make the choice (often, but not systematically, unique) of a {\it good}~root, that is a simple root $\gamma$ such that $A_\gamma N_\gamma$ is quasi-isometric to a real hyperbolic space. 
Thanks to \cite[Planches]{BBK-Lie-4to6}, this gives easily $|\Psi|$ since the root system of $\Phi(\Delta \setminus \{ \gamma \})$ is given by the Dynkin diagram of $\Phi$ minus the vertex of type $\gamma$ and the edges emanating from it. 
We must then sort the roots of $\Psi$ according to their length, which is done again thanks to the concrete descriptions of [loc. cit.]. 
It remains then to apply the last formula for $d-1$ and to use the multiplicities given in \cite[Table VI, pp.\! 532-534]{Helgason}. 

\smallskip

The rest of this section is dedicated to computing the width $d = D-d+2$ of the (possibly) non-vanishing strip, and to determine the minimal one when the choice of a good root $\gamma$ is not unique for the group $G$ under consideration. 

\smallskip

{\it The case of split groups}.---~Investigating the class of admissible split simple real Lie groups is the opportunity to provide a description of the roots sets $\Psi$ of nilpotent radicals, that will be useful in the more complicated cases; we use the notation of \cite[Planches]{BBK-Lie-4to6}. 

\begin{enumerate} 
\item[$\bullet$] When $G = {\rm SL}_{n+1}({\bf R})$, the root system has type ${\rm A}_n$, any simple root $\alpha_i$ $(1 \leqslant i \leqslant n)$ is a good root, and we have $|\Psi| = i \times (n-i)$; all roots have the same length. 
\item[$\bullet$] When $G = {\rm SO}_{n+1,n}({\bf R})$, the root system has type ${\rm B}_n$, the only good root is $\alpha_1$ and we have: $|\Psi| = |\Phi^+({\rm B}_n)|-|\Phi^+({\rm B}_{n-1})|=n^2 -(n-1)^2=2n-1$, with one root of length 1 and $(n-1)+(n-1)=2n-2$ roots of length $\sqrt{2}$. 
\item[$\bullet$] When $G = {\rm Sp}_{2n}({\bf R})$, the root system has type ${\rm C}_n$, the only good root is $\alpha_n$ and we have: $|\Psi| = |\Phi^+({\rm C}_n)|-|\Phi^+({\rm A}_{n-1})|=n^2 -{n(n-1)\over 2}={n(n+1)\over 2}$, with ${n(n-1)\over 2}$ roots of length $\sqrt{2}$ and $n$ roots of length $2$. 
\item[$\bullet$] When $G = {\rm SO}_{n,n}({\bf R})$, the root system has type ${\rm D}_n$, the good roots are $\alpha_1$, $\alpha_{n-1}$ and $\alpha_n$; all roots have the same length. 
For the choice $\gamma=\alpha_1$, we have: $|\Psi| = |\Phi^+({\rm D}_n)|-|\Phi^+({\rm D}_{n-1})|=n(n-1)-(n-1)(n-2)=2(n-1)$ and for $\gamma=\alpha_{n-1}$ or $\alpha_n$, we have:
$|\Psi| = |\Phi^+({\rm D}_n)|-|\Phi^+({\rm A}_{n-1})| = n(n-1) - {n(n-1) \over 2} = {n(n-1) \over 2}$.
\item[$\bullet$] When $G = {\rm E}_6^6({\bf R})$, the root system has type ${\rm E}_6$, the good roots are $\alpha_1$ and $\alpha_6$; all roots have the same length. 
Whatever the choice, we have: 
$|\Psi| = |\Phi^+({\rm E}_6)|-|\Phi^+({\rm D}_5)| = 36-20 = 16$.
\item[$\bullet$] When $G = {\rm E}_7^7({\bf R})$, the root system has type ${\rm E}_7$, the only good root is $\alpha_7$; all roots have the same length and we have 
$|\Psi| = |\Phi^+({\rm E}_7)|-|\Phi^+({\rm E}_6)| = 63-36 = 27$.
\end{enumerate} 

As already mentioned, in this case we can then compute $d=1+|\Psi|$ and take $w=D-d+2$ to be minimal when several good roots are available. 

{\it The case of complex groups seen as real ones}.---~Then the multiplicities are 2; in this case we have $d=1+2|\Psi|$ and again we can take $w=D-d+2$ to be minimal when several good roots are available. 

{\it The remaining admissible simple real Lie groups}.---~
We use here the formula $d-1 = 
m_1 \cdot | \{\alpha \in \Psi : m_\alpha = m_1 \}| 
+ 
m_2 \cdot | \{\alpha \in \Psi : m_\alpha = m_2 \}|$ and the above partition of $\Psi$ into short and long roots when the root system $\Psi$ is not simply laced, combined with the multiplicities given in \cite[Table VI, pp.\! 532-534]{Helgason}. 

\begin{enumerate} 
\item[$\bullet$] When $G = {\rm SL}_{n+1}({\bf H})$, the root system has type ${\rm A}_n$, any simple root $\alpha_i$ $(1 \leqslant i \leqslant n)$ is good, we have $|\Psi| = i \times (n-i)$ and all roots have multiplicity 4, so that $d-1 = 4i(n-i)$ for $\gamma=\alpha_i$. 
\item[$\bullet$] When $G = {\rm SU}_{n,n}({\bf R})$, the relative root system has type ${\rm C}_n$, the good root is $\alpha_n$, the ${n(n-1)\over 2}$ roots of length $\sqrt{2}$ have multiplicity 2 
and the $n$ roots of length $2$ have multiplicity 1, so that $d-1 = n(n-1)+n=n^2$. 
\item[$\bullet$] When $G = {\rm Sp}_{2n,2n}({\bf R})$, the relative root system has type ${\rm C}_n$, the good root is $\alpha_n$, the ${n(n-1)\over 2}$ roots of length $\sqrt{2}$ have multiplicity 4
and the $n$ roots of length $2$ have multiplicity 3, so that $d-1 = 2n(n-1)+3n = 2n^2+n$. 
\item[$\bullet$] When $G = {\rm SO}^*_{4n}({\bf R})$, the relative root system has type ${\rm C}_n$, the good root is $\alpha_n$, the ${n(n-1)\over 2}$ roots of length $\sqrt{2}$ have multiplicity 4
and the $n$ roots of length $2$ have multiplicity 1, so that $d-1 = 2n(n-1)+n = 2n^2-n$. 
\item[$\bullet$] When $G = {\rm E}_6^2({\bf R})$, the relative root system has type ${\rm A}_2$, any of two simple roots is good and all roots have multiplicity 8, so that $d-1=16$. 
\item[$\bullet$] When $G = {\rm E}_7^3({\bf R})$, the relative root system has type ${\rm C}_3$, the good root is $\alpha_3$, there are 3 roots of each length $\sqrt{2}$ and $2$, and since the multiplicities are 8 and 1, this gives 
$d-1=27$. 

\end{enumerate} 

At last, in the case of non-split orthogonal groups ${\rm SO}_{p,q}({\bf R})$ (\emph{i.e.\!}, where $q-2 \geqslant p \geqslant 2$), we always obtain a relative root system of type $B_p$, with $2p-2$ roots of length $\sqrt{2}$ and multiplicity 1 and one root of length 1 and multiplicity $q-p$, so that $d-1=p+q-2$.

The next two pages contain tables which summarize the vanishing results obtained by our method. 
In conclusion, apart from the general family of non-split orthogonal groups (which is special since it depends on two parameters), the infinite families of classical admissible groups provide a vanishing proportion of ${1 \over 2}$ asymptotically in the rank. 
For non-split orthogonal groups, fixing the rank (\emph{i.e.\!} the smallest number in the signature) and letting the bigger parameter go to infinity provide a vanishing proportion equal to the inverse of the rank. 

\vfill\eject

\begin{turn}{90}
\begin{tabular}{| M{2cm} || M{1.2cm} | M{1.7cm} | M{1.3cm} | M{2cm} | c | M{2.7cm} | M{2.1cm} | M{2.8cm} |}
\hline

Admissible group & 
Cartan type & 
Relative root system $\Phi$ & 
Good root $\gamma$ & 
$ |\Psi |$ & 
Multiplicities & 
$d-1 = {\rm dim} \, N_\gamma$ & 
$D = {\rm dim} \, X$ 
& Asymptotic vanishing proportion $\displaystyle \lim_{l \to +\infty} \scriptstyle {d(l) \over D(l)}$ \tabularnewline
\hline\hline

${\rm SL}_{l+1}({\bf R})$ &
A I &
${\rm A}_l$ &
any simple root $\alpha_i$ &
$i(l+1-i)$ & 
$1$ &
$i(l+1-i)$ best choice: ${(l+1)^2 \over 4}$ (odd $l$), ${l(l+2) \over 4}$ (even $l$) & 
$\displaystyle {l(l+3) \over 2}$ & 
${1 \over 2}$ \tabularnewline
\hline

${\rm SL}_{l+1}({\bf H})$ &
A II &
${\rm A}_l$ &
any simple root $\alpha_i$ &
$i(l+1-i)$ & 
$4$ &
$4i(l+1-i)$ best choice: $\scriptstyle (l+1)^2$ (odd $l$), $\scriptstyle l(l+2)$ (even $l$) & 
$\displaystyle l(2l+3)$ & 
${1 \over 2}$ \tabularnewline
\hline

${\rm SU}_{l,l}({\bf R})$ &
A III &
${\rm C}_l$ &
$\alpha_l$ &
${l(l+1) \over 2}$ & 
$2$ and $1$ &
$l^2$ & 
$2l^2$ & 
${1 \over 2}$ \tabularnewline
\hline

${\rm Sp}_{2l}({\bf R})$ &
C I &
${\rm C}_l$ &
$\alpha_l$ &
${l(l+1) \over 2}$ & 
$1$ &
$\displaystyle{l(l+1) \over 2}$ & 
$l(l+1)$ & 
${1 \over 2}$ \tabularnewline
\hline

${\rm Sp}_{2l,2l}({\bf R})$ &
C II &
${\rm C}_l$ &
$\alpha_l$ &
${l(l+1) \over 2}$ & 
$4$ and $3$ &
$2l^2+l$ & 
$4l^2$ & 
${1 \over 2}$ \tabularnewline
\hline

${\rm SO}_{l,l}({\bf R})$ &
D I &
${\rm C}_l$ &
$\alpha_1$, $\alpha_{l-1}$ and $\alpha_l$ &
$2(l-1)$ for $\gamma = \alpha_1$, ${l(l-1) \over 2}$ otherwise & 
$1$ &
best choice: ${l(l-1) \over 2}$ & 
$l^2$ & 
${1 \over 2}$ \tabularnewline
\hline

${\rm SO}^*_{4l}({\bf R})$ &
D III &
${\rm C}_l$ &
$\alpha_l$ &
${l(l+1) \over 2}$ & 
$4$ and $1$ &
$2l^2-l$ & 
$2l(2l-1)$ & 
${1 \over 2}$ \tabularnewline
\hline

${\rm E}_6^6({\bf R})$ &
E I &
${\rm E}_6$ &
$\alpha_1$ and $\alpha_6$ &
$16$ & 
$1$ &
$16$ & 
$42$ & 
${5 \over 14}$ \tabularnewline
\hline

${\rm E}_6^2({\bf R})$ &
E IV &
${\rm A}_2$ &
$\alpha_1$ and $\alpha_2$ &
$2$ & 
$8$ &
$16$ & 
$26$ & 
${15 \over 26}$ \tabularnewline
\hline

${\rm E}_7^7({\bf R})$ &
E V &
${\rm E}_7$ &
$\alpha_7$ &
$27$ & 
$1$ &
$27$ & 
$70$ & 
${13 \over 35}$ \tabularnewline
\hline

${\rm E}_7^3({\bf R})$ &
E VII &
${\rm C}_3$ &
$\alpha_3$ &
$6$ & 
$8$ and $1$ &
$27$ & 
$54$ & 
${13 \over 27}$ \tabularnewline
\hline
\end{tabular}
\end{turn}

\bigskip

\begin{turn}{90}
\begin{tabular}{| M{2cm} || M{1.2cm} | M{1.7cm} | M{1.3cm} | M{2cm} | c | M{2.8cm} | M{2.1cm} | M{2.8cm} |}
\hline

Admissible group & 
Cartan type & 
Relative root system & 
Good root $\gamma$ & 
$|\Psi |$ & 
Multiplicities & 
$d-1 = {\rm dim} \, N_\gamma$ & 
$D = {\rm dim} \, X$ 
& Asymptotic vanishing proportion $\displaystyle \lim_{l \to +\infty} \scriptstyle {d(l) \over D(l)}$ \tabularnewline
\hline\hline

${\rm SL}_{l+1}({\bf C})$ &
-- &
${\rm A}_l$ &
any simple root $\alpha_i$ &
$i(l+1-i)$ & 
$2$ &
$2i(l+1-i)$ best choice: ${(l+1)^2 \over 2}$ (odd $l$), ${l(l+2) \over 2}$ (even $l$)
& 
$\displaystyle l(l+2)$ & 
${1 \over 2}$ \tabularnewline
\hline

${\rm SO}_{l,l+1}({\bf C})$ &
-- &
${\rm B}_l$ &
$\alpha_1$ &
$2l-1$ & 
$2$ &
$4l-2$ & 
$l(2l+1)$ & 
$0$ \tabularnewline
\hline

${\rm Sp}_{2l}({\bf C})$ &
-- &
${\rm C}_l$ &
$\alpha_l$ &
${l(l+1) \over 2}$ & 
$2$ &
$l(l+1)$ & 
$l(2l+1)$ & 
${1 \over 2}$ \tabularnewline
\hline

${\rm SO}_{l,l}({\bf C})$ &
-- &
${\rm D}_l$ &
$\alpha_1$, $\alpha_{l-1}$ and $\alpha_l$ &
$2(l-1)$ for $\gamma = \alpha_1$, ${l(l-1) \over 2}$ otherwise & 
$2$ &
best choice: $l(l-1)$ & 
$l(2l-1)$ & 
${1 \over 2}$ \tabularnewline
\hline

${\rm E}_6({\bf C})$ &
-- &
${\rm E}_6$ &
$\alpha_1$ and $\alpha_6$ &
$16$ & 
$2$ &
$32$ & 
$78$ & 
${31 \over 78}$ \tabularnewline
\hline

${\rm E}_7({\bf C})$ &
-- &
${\rm E}_7$ &
$\alpha_7$ &
$27$ & 
$2$ &
$54$ & 
$133$ & 
${53 \over 133}$ \tabularnewline
\hline
\end{tabular}
\end{turn}

\section{Vanishing in degree one}\label{9}
As a further application of Theorem \ref{theorem_vanishing}, we give in this section a proof of 
Corollary \ref{intro_cor2}, namely vanishing of ${\rm H}^1 _{\mathrm {ct}} \bigl( G, L^p(G) \bigr)$ for every $p>1$ and every admissible 
simple Lie group $G$ of real rank $\geqslant 2$.

Let $G$ be an admissible simple Lie group, and let $P = M \ltimes AN$
be a maximal parabolic subgroup with $AN$ quasi-isometric to $\mathbb H ^d$.
One has ${\rm H}_{\mathrm{ct}} ^1 \bigl(G, L^p (G)\bigr) = \{0\}$ for $p \leqslant d-1$, thanks to Theorem \ref{theorem_vanishing}.
Suppose that $p > d-1$. Then Theorem \ref{theorem_vanishing} again implies that ${\rm H}_{\mathrm{ct}} ^1 \bigl(G, L^p (G)\bigr)$ is linearly
isomorphic to 
$${\rm H}_{\mathrm{ct}} ^0 \biggl(M, L^p \Bigl(M, {\rm H}_{\mathrm{ct}} ^1 \bigl(AN, L^p (AN)\bigr)\Bigr)\biggr) \simeq 
L^p \Bigl(M, {\rm H}_{\mathrm{ct}} ^1 \bigl(AN, L^p (AN)\bigr)\Bigr) ^M .$$
By \cite{Pa1} (see also \cite{Rez} or \cite{BP2}) the topological vector space ${\rm H}_{\mathrm{ct}} ^1 \bigl(AN, L^p (AN)\bigr)$
is canonically isomorphic to the following Besov space on 
${\bf R}^{d-1} \simeq N \simeq \partial \mathbb H ^d \setminus \{\infty \}$:
$$B^{(d-1)/p} _{p,p} ({\bf R} ^{d-1}) :=
\{u : {\bf R} ^{d-1} \to {\bf R} ~\mathrm{measurable}
~;~ \Vert u \Vert _B < +\infty \} / {\bf R}$$
$$\mathrm{where}~~~~~\Vert u \Vert _B ^p = \int _{{\bf R} ^{d-1} \times {\bf R} ^{d-1}}
\frac{\vert u(x) - u(x') \vert ^p}{\Vert x-x' \Vert ^{2(d-1)}} dxdx'$$
and ''$/{\bf R}$'' means dividing by the constant functions. 
The $M$-action on $L^p\Bigl(M, {\rm H}^1 _{\mathrm{ct}} \bigl(AN, L^p(AN)\bigr)\Bigr)$ described in Proposition \ref{spectral_prop}, 
takes now the following form. Denote by $(m, x) \mapsto m(x)$ the linear action of $M$ by conjugation on $N$.
Then for  $m, y  \in M$,  
$f : M \to B^{(d-1)/p} _{p,p} ({\bf R} ^{d-1})$ and $x \in N \simeq {\bf R}^{d-1}$, one has 
$$(m \cdot f) (y)(x) = f(ym)\bigl(m ^{-1} (x)\bigr).$$
Therefore ${\rm H}_{\mathrm{ct}} ^1 \bigl(G, L^p (G)\bigr)$ is linearly isomorphic to the fixed point space
$L^p \bigl(M , B_{p,p} ^{(d-1)/p} ({\bf R} ^{d-1}) \bigr)^M$, which in turn is isomorphic to 
$$\Bigl\{u \in B_{p,p} ^{(d-1)/p} ({\bf R} ^{d-1})~\Big\vert \int _M \!\Vert u \circ m \Vert _B ^p d\mathcal H (m) 
<  \infty \Bigr\}.$$
Since $M$ acts on ${\bf R} ^{d-1}$ linearly and preserves the volume, a change of variable and Fubini-Tonelli yield
\begin{equation}\label{H^1_eq0}
\int _M \Vert u \circ m \Vert _B ^p d\mathcal H (m) 
= \int _{{\bf R} ^{d-1} \times {\bf R}^{d-1}} \frac{\vert u(x) - u(x') \vert ^p}{\Vert x-x' \Vert ^{2(d-1)}}
\varphi (x-x') dxdx',
\end{equation}
where $\varphi$ is the following positive function on ${\bf R} ^{d-1} \setminus \{0 \}$:
\begin{equation}\label{H^1_eq}
\varphi (v) = \int _M \frac{d \mathcal H (m)}{\bigl\Vert m^{-1} (\frac{v}{\Vert v \Vert}) \bigr\Vert  ^{2(d-1)}} .
\end{equation}

We claim that 
$\int _M \Vert u \circ m \Vert _B ^p d\mathcal H (m) = +\infty$ unless $u$ is constant \emph{a.e.}; the corollary will follow. 
The proof relies on three lemmata. The first one gives a sufficient condition for $u$ to be constant.

\begin{lemma}\label{H^1_lem0}
Let $u \in B_{p,p} ^{(d-1)/p} ({\bf R} ^{d-1})$ that is invariant under a $1$-parameter group of translations.
Then $u$ is constant \emph{a.e.}
\end{lemma}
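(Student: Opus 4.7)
The plan is to reduce the lemma to a short Fubini--Tonelli calculation. I would choose orthogonal coordinates on $\mathbb{R}^{d-1} = \mathbb{R} \times \mathbb{R}^{d-2}$, writing $x = (x_1, y)$, so that the given 1-parameter subgroup of translations acts by $(x_1, y) \mapsto (x_1 + t, y)$. Since the Besov space $B_{p,p}^{(d-1)/p}(\mathbb{R}^{d-1})$ is defined modulo constants, invariance of $u$ at the level of the Besov class only gives, for a measurable representative $\tilde u$, that $\tilde u(x_1 + t, y) - \tilde u(x_1, y) = c(t)$ for some additive $c(t) = ct$. After modifying $\tilde u$ within its class one may therefore assume $\tilde u(x_1, y) = \tilde v(y) + c\, x_1$ for some measurable $\tilde v : \mathbb{R}^{d-2} \to \mathbb{R}$ and some $c \in \mathbb{R}$.

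Next I would substitute this form of $\tilde u$ into the Besov seminorm and change variables $(x_1, x_1') \mapsto (s, t) = (x_1 - x_1', x_1 + x_1')$ in the inner integral. Fubini--Tonelli then gives
\begin{equation*}
\Vert u \Vert_B^p = \tfrac{1}{2}\int_{\mathbb{R}^{d-2} \times \mathbb{R}^{d-2}} \Bigl(\int_{\mathbb{R}} dt\Bigr) \Bigl(\int_{\mathbb{R}} \frac{\bigl|(\tilde v(y) - \tilde v(y')) + cs\bigr|^p}{(s^2 + \Vert y - y'\Vert^2)^{d-1}}\, ds\Bigr)\, dy\, dy'.
\end{equation*}
The factor $\int_{\mathbb{R}} dt$ is infinite. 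Since the $s$-integrand is a continuous nonnegative function of $s$, its $s$-integral is strictly positive except when the numerator is identically zero in $s$, i.e.\ when $c = 0$ \emph{and} $\tilde v(y) = \tilde v(y')$. Hence finiteness of $\Vert u \Vert_B^p$ forces $c = 0$ and $\tilde v(y) = \tilde v(y')$ for almost every $(y, y') \in \mathbb{R}^{d-2} \times \mathbb{R}^{d-2}$, so $\tilde v$, and therefore $u$, is constant almost everywhere.

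The computation is elementary; the only subtle point is the linear cocycle $c$ arising from the quotient by constants in the definition of the Besov space, which the same divergence argument rules out. (When $d = 2$ the invariant 1-parameter subgroup exhausts $N = \mathbb{R}$, and the conclusion is immediate.)
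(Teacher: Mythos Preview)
Your proof is correct and follows essentially the same Fubini--Tonelli idea as the paper. The paper's version is slightly more streamlined: it works directly with the integrand $\psi(x,x') = \frac{|u(x)-u(x')|^p}{\|x-x'\|^{2(d-1)}}$, observes that $\psi$ is invariant under the diagonal translations $(x,x') \mapsto (x+tv_0, x'+tv_0)$, and concludes from $\int \psi < \infty$ via Fubini that $\psi = 0$ a.e. Because $\psi$ depends on $u$ only through the differences $u(x)-u(x')$, this argument is insensitive to the quotient by constants and so the cocycle $c(t)$ you carefully track never appears; your explicit coordinate computation and the decomposition $\tilde u = \tilde v(y) + c x_1$ amount to the same thing, just unpacked.
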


\begin{proof}
Express the Besov norm of $u$ as
$\Vert u \Vert _B ^p = \int _{{\bf R} ^{d-1} \times {\bf R} ^{d-1}} \psi~dxdx'$ with 
$$\psi (x,x') = \frac{\vert u(x) - u(x') \vert ^p}{\Vert x - x' \Vert ^{2(d-1)}}.$$
Suppose that $u$ is invariant under a $1$-parameter group of translations along some vector line ${\bf R}v_0$. Then $\psi$ is invariant under the $1$-parameter group of translations along ${\bf R}(v_0, v_0)$. If in addition one has
$\int _{{\bf R} ^{d-1} \times {\bf R} ^{d-1}} \psi~ dxdx' < +\infty$, then Fubini-Tonelli implies that $\psi$ is null \emph{a.e.}
This in turn implies that $u$ is constant \emph{a.e.}
\end{proof} 

The second lemma collects some simple properties of the function $\varphi$ defined in (\ref{H^1_eq}).

\begin{lemma}\label{H^1_lem1}
The function $\varphi$ admits the following properties:
\begin{enumerate}
\item $\varphi (\lambda v) = \varphi (v)$ for every $\lambda \in {\bf R}^*$
and every $v \in {\bf R} ^{d-1} \setminus \{0 \}$.
\item There exists a positive constant $c_0$ such that $\varphi \geqslant c_0$.
\item The function $v \mapsto \frac{\varphi (v)}{\Vert v \Vert ^{2(d-1)}}$ is $M$-invariant on ${\bf R} ^{d-1} \setminus \{0 \}$.
\item If $v \in {\bf R} ^{d-1} \setminus \{0 \}$, $g \in M$ and $\lambda \in {\bf R}$, are such that $g(v) = \lambda v$ with $\vert \lambda \vert \neq 1$,
then $\varphi (v) = +\infty$.
\end{enumerate}
\end{lemma}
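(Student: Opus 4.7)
The plan is to verify the four properties in sequence; items (1)--(3) follow from elementary manipulations of the defining integral, and item (4) is then a direct consequence of (1)--(3) combined.

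For item (1), I would simply observe that $\varphi(v)$ depends on $v$ only through the unit vector $v/\|v\|$, and that the positive case $\lambda>0$ is immediate while the case $\lambda<0$ is handled by noting $\|m^{-1}(-w)\|=\|m^{-1}(w)\|$ for any unit vector $w$. For item (2), the key point is that the integrand is a continuous positive function of $m$, and if $K$ is any compact neighborhood of the identity in $M$ then the operator norm $\|m^{-1}\|$ is bounded on $K$ by some constant $C$, so $\|m^{-1}(w)\|\leqslant C$ for every unit vector $w$. Integrating over $K$ alone then gives $\varphi(v)\geqslant \mathcal{H}(K)/C^{2(d-1)}$, a positive lower bound independent of $v$, and one sets $c_0$ to this value.

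For item (3), the computation is cleaner if one first absorbs the normalisation factor: since $\varphi$ is zero-homogeneous, one has
\[ \frac{\varphi(v)}{\|v\|^{2(d-1)}}=\int_M\frac{d\mathcal{H}(m)}{\|m^{-1}(v)\|^{2(d-1)}}. \]
Then I would replace $v$ by $g(v)$, so that $m^{-1}(g(v))=(g^{-1}m)^{-1}(v)$, and perform the substitution $m'=g^{-1}m$ using left-invariance of the Haar measure $\mathcal{H}$ on $M$. This returns the original integral, proving the $M$-invariance.

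For item (4), which I expect to be the main logical step but is short once (1)--(3) are established, assume for contradiction that $\varphi(v)<+\infty$. Applying item (3) at the pair $(v,g(v))=(v,\lambda v)$ gives
\[ \frac{\varphi(\lambda v)}{\|\lambda v\|^{2(d-1)}}=\frac{\varphi(v)}{\|v\|^{2(d-1)}}, \]
and item (1) rewrites the left-hand side as $\varphi(v)/(|\lambda|^{2(d-1)}\|v\|^{2(d-1)})$. Hence $\varphi(v)(1-|\lambda|^{-2(d-1)})=0$, and since $|\lambda|\neq 1$ one concludes $\varphi(v)=0$, contradicting the strictly positive lower bound $c_0$ from item (2). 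The only possibility is therefore $\varphi(v)=+\infty$.
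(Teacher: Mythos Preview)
Your proof is correct and follows the same approach as the paper. The paper's proof is terser---it only writes out the argument for items (3) and (4), noting for (3) that it follows from left-invariance of $\mathcal H$ and for (4) deriving the chain $\varphi(v)=\varphi(\lambda v)=\varphi(g(v))=|\lambda|^{2(d-1)}\varphi(v)$---but the content is identical to yours.
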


\begin{proof} To prove the second item, observe that for every relatively compact open subset $U \subset G$, the map
$$v \in {\bf R} ^{d-1} \setminus \{0 \} \longmapsto \int _U \frac{d \mathcal H (m)}{\bigl\Vert m^{-1} (\frac{v}{\Vert v \Vert}) \bigr\Vert  ^{2(d-1)}}$$
is positive, continuous, invariant by scalar multiplication, and smaller that $\varphi$.
The third item follows from the fact that $\mathcal H$ is a left-invariant measure.
To obtain the last item, one observes that the first and third items imply that 
$$\varphi (v) = \varphi (\lambda v) = \varphi (g(v)) = \frac{\Vert g(v) \Vert ^{2(d-1)}}{\Vert v \Vert ^{2(d-1)}} \varphi (v)
= \vert \lambda \vert ^{2(d-1)} \varphi (v).$$
Thus $\varphi (v) = +\infty$ by (2). 
\end{proof} 

We now describe the behaviour of $\varphi$ in a neighborhood of an eigenvector.
\begin{lemma}\label{H^1_lem2}
Let $(g^t)_{t\in {\bf R}} \subset M$ be a non-trivial $1$-parameter subgroup which acts diagonally with positive eigenvalues on 
${\bf R} ^{d-1}$. Let $\lambda _{\mathrm {max}} >1$ and $\lambda _{\mathrm {min}}<1$ be the maximum and minimum eigenvalues of $g$. 
Let $v_0 \in {\bf R} ^{n-1}$ be an eigenvector of $g$ for the eigenvalue $\lambda _{\mathrm {max}}$. 
There exist a constant $c_2 > 0$ and a neighborhood $U$ of $v_0$
such that for every $v \in U$ one has 
$$\varphi (v) \geqslant \frac{c_2}{\Vert v-v_0 \Vert ^\alpha}$$ 
with $\alpha = 2(d-1) \frac{\log \lambda _{\mathrm {max}}}{\log \lambda _{\mathrm {max}} - \log \lambda _{\mathrm {min}}}$.
\end{lemma}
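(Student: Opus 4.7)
The plan is to combine items (2) and (3) of Lemma \ref{H^1_lem1} to reduce the lower bound on $\varphi(v)$ to an upper bound on $\|g^{-t}(v)\|$ for a suitable $t=t(v)>0$, and then to estimate this norm via the eigenbasis of $g$. Concretely, item (3) gives for every $m\in M$ the homogeneity relation $\varphi(m(v))=\varphi(v)\cdot\|m(v)\|^{2(d-1)}/\|v\|^{2(d-1)}$, so that applying it with $m=g^{-t}$ and combining with $\varphi\geqslant c_0$ from item (2) yields
\begin{equation*}
\varphi(v)\;\geqslant\;c_0\left(\frac{\|v\|}{\|g^{-t}(v)\|}\right)^{2(d-1)}\quad\text{for every }t\in\mathbb{R}.
\end{equation*}
The game is then to choose $t>0$ so that $\|g^{-t}(v)\|$ is as small as possible when $v$ is close to $v_0$.

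For this, I fix an inner product on $\mathbb{R}^{d-1}$ making an eigenbasis $e_1=v_0,e_2,\dots,e_{d-1}$ of $g$ orthonormal, with corresponding eigenvalues $\mu_1=\lambda_{\max}$ and $\mu_i\in[\lambda_{\min},\lambda_{\max}]$. Writing $v=v_0+w$ with $\epsilon:=\|w\|=\|v-v_0\|$ small and decomposing $w=\sum w_i e_i$, one has
\begin{equation*}
\|g^{-t}(v)\|^{2}\;=\;(1+w_1)^{2}\lambda_{\max}^{-2t}+\sum_{i\geqslant 2}w_i^{2}\mu_i^{-2t}\;\leqslant\;C\bigl(\lambda_{\max}^{-2t}+\epsilon^{2}\lambda_{\min}^{-2t}\bigr),
\end{equation*}
where $C$ is an absolute constant (using $\lambda_{\min}<1\leqslant\mu_i\leqslant\lambda_{\max}$, so that $\mu_i^{-2t}\leqslant\lambda_{\min}^{-2t}$ for $t>0$, and shrinking $U$ so that $|1+w_1|$ is bounded).

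The two terms on the right become equal when $t=-\log\epsilon/\log(\lambda_{\max}/\lambda_{\min})$, which is positive since $\epsilon<1$ and $\lambda_{\max}>1>\lambda_{\min}$. For this choice one computes
\begin{equation*}
\lambda_{\max}^{-2t}\;=\;\epsilon^{2\beta},\qquad\text{with}\quad\beta=\frac{\log\lambda_{\max}}{\log\lambda_{\max}-\log\lambda_{\min}},
\end{equation*}
and similarly $\epsilon^{2}\lambda_{\min}^{-2t}=\epsilon^{2\beta}$. Hence $\|g^{-t}(v)\|\leqslant C'\epsilon^{\beta}$, while $\|v\|\geqslant 1/2$ on a small enough neighborhood $U$ of $v_0$. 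Plugging back into the first displayed inequality yields $\varphi(v)\geqslant c_2\,\epsilon^{-2(d-1)\beta}=c_2/\|v-v_0\|^{\alpha}$ with the announced exponent $\alpha=2(d-1)\log\lambda_{\max}/(\log\lambda_{\max}-\log\lambda_{\min})$.

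The only real point to watch is the choice of $t$: it must be positive (so that item (3) may indeed be iterated and the inequality $\mu_i^{-2t}\leqslant\lambda_{\min}^{-2t}$ holds), and simultaneously it must balance the two competing terms $\lambda_{\max}^{-2t}$ and $\epsilon^{2}\lambda_{\min}^{-2t}$. Both conditions are met by the explicit $t(\epsilon)$ above, so no serious obstacle arises; the estimate is of the same nature as the classical asymptotics of balls under an $\mathbb{R}$-split diagonal flow.
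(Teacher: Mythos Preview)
Your argument is correct and follows essentially the same route as the paper's proof: both apply item~(3) of Lemma~\ref{H^1_lem1} with $m=g^{-t}$ together with the lower bound $\varphi\geqslant c_0$ from item~(2), and then choose $t$ so that the $v_0$-component and the transverse component of $g^{-t}(v)$ balance. The only stylistic difference is that the paper defines $t$ implicitly as the first time these two components have equal norm, while you solve for $t$ explicitly by equating the upper bounds $\lambda_{\max}^{-2t}$ and $\epsilon^{2}\lambda_{\min}^{-2t}$; the resulting exponent $\alpha$ is of course the same.

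One trivial slip: in your parenthetical you wrote $\lambda_{\min}<1\leqslant\mu_i$, but of course some eigenvalue equals $\lambda_{\min}<1$; what you actually use (and what holds) is $\mu_i\geqslant\lambda_{\min}$, so that $\mu_i^{-2t}\leqslant\lambda_{\min}^{-2t}$ for $t>0$. Also, since you introduce a new inner product to make the eigenbasis orthonormal while $\varphi$ is defined via the standard norm, you are implicitly using equivalence of norms on $\mathbb{R}^{d-1}$; the paper does the same (switching to a max-norm adapted to the decomposition $\mathbb{R}v_0\oplus W$), so this is fine.
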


\begin{proof} If $v$ is an eigenvector to the eigenvalue $\lambda _{\mathrm {max}}$, then 
$\varphi (v) = +\infty$ thanks to Lemma \ref{H^1_lem1}. Suppose now that $v$ is not such an eigenvector. 
Let $W \subset {\bf R} ^{d-1}$ be a subspace such that 
${\bf R} ^{d-1} = {\bf R} v_0 \oplus W$ is a $\{g^t\}_{t\in {\bf R}}$-invariant decomposition. We write every vector $v \in {\bf R}^{d-1}$ 
as  $v = v_1 + v'$ according to this decomposition. For simplicity we will abusively denote by the same symbol 
$\Vert \cdot \Vert$ the standard norm on ${\bf R} ^{d-1}$ and the norm $\max (\Vert v_1 \Vert, \Vert v' \Vert)$.

Suppose that $\Vert v' \Vert \leqslant \Vert v_1 \Vert$. Then there exists $t \geqslant 0$ such that 
$\Vert g^{-t} (v') \Vert = \Vert g^{-t} (v_1) \Vert$. Write $w = g^{-t} (v) = g^{-t}(v_1) + g^{-t}(v') = w_1 + w'$. It follows from the previous lemma that
\begin{align*}
\varphi (v) = \varphi (g^t (w)) & = \frac{\Vert g^t(w) \Vert ^{2(d-1)}}{\Vert w \Vert ^{2(d-1)}} \varphi (w)
= \frac{\Vert \lambda ^t _{\mathrm {max}} w_1 + g^t(w') \Vert ^{2(d-1)}}{\Vert w_1 + w' \Vert ^{2(d-1)}} \varphi (w)\\
&\geqslant \frac{(\lambda _{\mathrm {max}} ^t \Vert w_1 \Vert )^{2(d-1)}}{(2 \Vert w_1 \Vert )^{2(d-1)}} \varphi (w)
\geqslant c_1 \lambda _{\mathrm {max}} ^{2t(d-1)},
\end{align*}
where $c_1 = 2 ^{-2(d-1)} c_0$ and $c_0$ comes from the previous lemma.
On the other hand, we have
$$\frac{\Vert v_1 \Vert}{\Vert v' \Vert} = \frac{\Vert \lambda _{\mathrm {max}}^t w_1 \Vert}{\Vert g^t(w') \Vert}
\leqslant \frac{\lambda _{\mathrm {max}} ^t}{\lambda _{\mathrm {min}}^t} \frac{\Vert w_1 \Vert}{\Vert w' \Vert} =
\frac{\lambda _{\mathrm {max}} ^t}{\lambda _{\mathrm {min}}^t}.$$ 
Since $(\lambda _{\mathrm {max}})^{2(d-1)} = (\lambda _{\mathrm {max}} / \lambda _{\mathrm {min}})^\alpha$ we obtain by combining the above inequalities 
$$\varphi (v) \geqslant c_1 \lambda _{\mathrm {max}} ^{2t(d-1)} = c_1 \frac{\lambda _{\mathrm {max}}^{t\alpha}}{ \lambda _{\mathrm {min}}^{t\alpha}}
\geqslant c_1 \frac{\Vert v_1 \Vert ^\alpha}{\Vert v' \Vert ^\alpha}.$$
Now there is a neighborhood of $v_0$ in which the ratio $\Vert v_1 \Vert ^\alpha / \Vert v' \Vert ^\alpha$ is comparable to 
$1 / \Vert v' \Vert ^\alpha$, which in turn is larger that $1/ \Vert v - v_0 \Vert ^\alpha$; the lemma follows.  
\end{proof}
Finally we complete the 
\begin{proof}[Proof of Corollary \ref{intro_cor2}]
Let $u \in B_{p,p} ^{(d-1)/p} ({\bf R} ^{d-1})$ be a function such that $\int _M \!\Vert u \circ m \Vert _B ^p d\mathcal H (m) 
<  \infty$. We want to prove that $u$ is constant \emph{a.e.} According to Lemma \ref{H^1_lem0}, it is enough to show that there is a non-zero vector 
$v_0 \in {\bf R}^{d-1}$ 
such that $u$ is constant almost everywhere along almost every line directed by $v_0$. 

Recall that $G$ is assumed to have rank $\geqslant 2$; since $P$ is a maximal proper parabolic subgroup, the Levi factor $M$ has rank $\geqslant 1$, hence is a noncompact semisimple group. 
Therefore $M$ contains a 
$1$-parameter subgroup $(g^t)_{t \in {\bf R}}$ which acts on ${\bf R} ^{d-1}$ as in Lemma \ref{H^1_lem2}. Indeed $M$ contains a Lie subgroup $H$ which is virtually
isomorphic to $\mathrm{SL}(2, \bf R)$; and from the representation theory of $\mathrm{SL}(2, \bf R)$, one knows that $H$ contains such a $1$-parameter subgroup
(see \cite[Cor.\! 7.2]{Humphreys-LA}).

Let $\lambda _{\mathrm {max}}$,
$\lambda _{\mathrm {min}}$, $v_0$, $c_2$, $U$ and $\alpha$ be as in Lemma \ref{H^1_lem2}. By changing $g$ 
to its inverse, if necessary, we can assume that $\lambda _{\mathrm {max}} \geqslant \lambda _{\mathrm {min}}^{-1}$; so that we have 
$\alpha \geqslant d-1$.  Since $u$ is measurable, it is approximately continuous \emph{a.e.} \!(see \cite[2.9.13]{Fed}).
In other words, for \emph{a.a.}\! $x \in {\bf R} ^{d-1}$ and every $\varepsilon >0$,
we have 
$$\lim _{r \to 0} \mathrm{meas}\bigl\{x' \in B(x,r) ~;~ \vert u(x') - u(x) \vert < \varepsilon \bigr\} \big/ \mathrm{meas}\bigl(B(x,r)\bigr) =1.$$
Let $a, b$ be distinct points on a line directed by $v_0$, such that $u$ is approximately
continuous at $a$ and $b$. We want to show that $u(a) = u(b)$. By multiplying $v_0$ by a real number if necessary, we can assume that 
$v_0 = b-a$.
For every $\varepsilon > 0$, there exists $r_\varepsilon >0$ such that for every $0<r \leqslant r_\varepsilon$ the set 
$$E_r (a) := \bigl\{x \in B(a,r) ~;~ \vert u(x) - u(a) \vert < \varepsilon \bigr\}$$
contains at least $99 \%$ of the measure of $B(a,r)$. 
We define similarly $E_r (b)$.

Suppose by contradiction that $u(a) \neq u(b)$. Let $\varepsilon = \vert u(a) - u(b) \vert /4$ and $0<r \leqslant r_\varepsilon$.
Then for every $(x,x') \in E_r (a) \times E_r (b)$ one has $\vert u(x) - u(x') \vert \geqslant \vert u(a) - u(b) \vert /2$.
Therefore by making the change of variable $v = x' -x$, we obtain with equation \ref{H^1_eq0}:
\begin{align*}
&\int _M \!\Vert u \circ m \Vert _B ^p d\mathcal H (m) \geqslant 
\int _{E_r (a) \times E_r (b)} \frac{\vert u(x) - u(x') \vert ^p}{\Vert x-x' \Vert ^{2(d-1)}}
\varphi (x-x') dxdx'\\
&\geqslant c_3 \vert u(a) - u(b) \vert ^p \int _{E_r (a) \times E_r (b)} \varphi (x-x') dxdx'\\
&= c_3 \vert u(a) - u(b) \vert ^p \int _{v \in {\bf R} ^{d-1}} \mathrm{meas}\bigl\{E_r (a) \cap \bigl(E_r (b) -v \bigr)\bigr\} 
 \varphi (v) dv,
\end{align*}
where $c_3$ is a positive constant that depends only on $\Vert a - b \Vert$. Since $b-a = v_0$, for $v$ 
close enough to $v_0$, one has 
$$\mathrm{meas}\bigl\{E_r (a) \cap \bigl(E_r (b) -v\bigr)\bigr\} \geqslant (3/10) \mathrm{meas}\bigl(B(a, r)\bigr).$$
With Lemma \ref{H^1_lem2} and the inequality $\alpha \geqslant d-1$ one gets that for every small enough neighbourhood $U$ of $v_0$:
$$
\int _M \!\Vert u \circ m \Vert _B ^p d\mathcal H (m) \geqslant 
c_4 \vert u(a) - u(b) \vert ^p \int _{U} \frac{dv}{\Vert v-v_0 \Vert ^{d-1}},$$
where $c_4$ depends only on $c_2, c_3, r$. Since the last integral is equal to $+\infty$, we obtain a contradiction.
\end{proof}

\def\cprime{$'$}

\bigskip

\bigskip

\noindent Laboratoire Paul Painlev\'e, UMR 8524 de CNRS, Universit\'e de Lille, 
Cit\'e Scientifique, Bat. M2, 59655 Villeneuve d'Ascq,
France. \\E-mail: bourdon@math.univ-lille1.fr.

\noindent Centre de Math\'ematiques Laurent Schwartz, UMR 7640 de CNRS, \'Ecole polytechnique,
91128 Palaiseau, France. \\E-mail: bertrand.remy@polytechnique.edu.

\end{document}